\newcommand{\R}{\mathbb{R}}
\renewcommand{\H}{\mathcal{H}}
\newcommand{\E}{\mathcal{E}}
\newcommand{\expnumber}[2]{{#1}\mathrm{e}{#2}}
\DeclareMathOperator*{\argmax}{arg\,max}
\DeclareMathOperator*{\argmin}{arg\,min}
\DeclareMathOperator*{\proj}{proj}
\DeclareMathOperator{\co}{co}
\DeclareMathOperator{\st}{s.t.}
\newcommand{\leqnomode}{\tagsleft@true\let\veqno\@@leqno}
\newcommand{\reqnomode}{\tagsleft@false\let\veqno\@@eqno}
\newtheorem{theorem}{Theorem}[section]
\newtheorem{myprop}[theorem]{Proposition}
\newtheorem{mydef}[theorem]{Definition}
\newtheorem{mycorollary}[theorem]{Corollary}
\newtheorem{mylemma}[theorem]{Lemma}
\newtheorem{myremark}[theorem]{Remark}
\let\OLDthebibliography\thebibliography
\renewcommand\thebibliography[1]{
	\OLDthebibliography{#1}
	\setlength{\parskip}{0pt}
	\setlength{\itemsep}{0pt plus 0.3ex}
}
\begin{document}

\title{Fast Multiobjective Gradient Methods with Nesterov Acceleration via Inertial Gradient-like Systems}
\author[1]{Konstantin Sonntag}
\author[2]{Sebastian Peitz}
\affil[1]{\normalsize Department of Mathematics, Paderborn University, Germany}
\affil[2]{\normalsize Department of Computer Science, Paderborn University, Germany}

\date{}

\maketitle

\begin{abstract}
    We derive efficient algorithms to compute weakly Pareto optimal solutions for smooth, convex and unconstrained multiobjective optimization problems in general Hilbert spaces. To this end, we define a novel inertial gradient-like dynamical system in the multiobjective setting, whose trajectories converge weakly to Pareto optimal solutions. Discretization of this system yields an inertial multiobjective algorithm which generates sequences that converge weakly to Pareto optimal solutions. We employ Nesterov acceleration to define an algorithm with an improved convergence rate compared to the plain multiobjective steepest descent method (Algorithm \ref{algo:ACC_GRAD}).
    A further improvement in terms of efficiency is achieved by avoiding the solution of a quadratic subproblem to compute a common step direction for all objective functions, which is usually required in first order methods. Using a different discretization of our inertial gradient-like dynamical system, we obtain an accelerated multiobjective gradient method that does not require the solution of a subproblem in each step (Algorithm \ref{algo:ACC_GRAD_wo_Q}). While this algorithm does not converge in general, it yields good results on test problems while being faster than standard steepest descent by two to three orders of magnitude.
\end{abstract}
\section{Introduction}
In many applications in industry, economics, medicine or transport, optimizing several criteria is of interest. In the latter, one wants to reach a destination as fast as possible with minimal power consumption. Drug development aims for maximizing efficacy while minimizing side effects. Even these elementary examples share an inherent feature. The different criteria one seeks to optimize are in general contradictory. There is no design choice that is best for all criteria simultaneously. This insight shifts the focus from finding a single optimal solution to a set of optimal compromises -- the Pareto set. Given the Pareto set, a decision maker can select an optimal compromise according to their preferences. In this paper, we derive efficient gradient-based algorithms to compute elements of the Pareto set. Formally, a problem involving multiple criteria can be formalized as an unconstrained multiobjective optimization problem
\leqnomode
\begin{align}
\tag{MOP}
    \min_{x \in \H} \left[ 
    \begin{array}{c}
        f_1(x)  \\
        \vdots \\
        f_m(x)
    \end{array}
    \right],
    \label{eq:MOP1}
\end{align}
where $f_i:\H \to \R$ for $i = 1, \dots, m$ are the objective functions describing the different criteria. Popular approaches to tackle this problem in the differentiable case are first order methods which exploit the smooth structure of the problem while not being computationally demanding compared to higher order methods involving exact or approximated Hessians. 

While in single objective optimization, \emph{accelerated} first order methods are very popular, these methods are not studied sufficiently from a theoretical point of view in the multiobjective setting. A fruitful approach to analyze accelerated gradient methods is to interpret them as discretizations of suitable gradient-like dynamical systems \cite{Su2014}. The analysis of the continuous dynamics is often easier and can later on be transferred to the discrete setting. This perspective is until now not fully taken advantage of in the area of multiobjective optimization. In this paper, we utilize this approach to derive accelerated gradient methods for multiobjective optimization.
To this end, we define and analyze the following novel dynamical gradient-like system
\begin{align}
\label{eq:IMOG'}
\tag{IMOG'}
        \ddot{x}(t) + \alpha\dot{x}(t) + \proj_{C(x(t))}{(-\ddot{x}(t))} = 0,
\end{align}
with $\alpha >0$, $C(x) \coloneqq \co \left(\lbrace \nabla f_i(x) \, : \, i = 1,\dots,m \rbrace \right)$, where $\co (\cdot)$ denotes the convex hull and $\proj_{C(x(t))}{(-\ddot{x}(t))}$ is the projection of $-\ddot{x}(t)$ onto the convex set $C(x(t))$. The system \eqref{eq:IMOG'} is an \emph{inertial multiobjective gradient-like system}.  We choose the designation \eqref{eq:IMOG'} to emphasize its relation to the system 
\begin{align}
\label{eq:IMOG2}
\tag{IMOG}
    m\ddot{x}(t) + \gamma \dot{x}(t) + \proj_{C(x(t))}{(0)} = 0,
\end{align}
with $m, \gamma >0$, which was discussed in \cite{Attouch2015}. In the single objective setting ($m=1$), both \eqref{eq:IMOG'} and \eqref{eq:IMOG2} reduce  to the \emph{heavy ball with friction system}
\begin{align}
\label{eq:HBF}
\tag{HBF}
    m\ddot{x}(t) + \gamma\dot{x}(t) + \nabla f(x(t)) = 0,
\end{align}
which is well studied for different types of objective functions $f$, see e.g. \cite{Polyak1964, Alvarez2000, Attouch2000}.

We discretize \eqref{eq:IMOG'} to obtain an iterative scheme of the form
\begin{align*}
    x^{k+1} = x^k + a(x^k-x^{k-1}) - s\sum_{i=1}^m \theta_i^k \nabla f_i(x^k),
\end{align*}
with appropriately chosen coefficients $a, s > 0$ and $\theta^k \in \R^m$. This scheme can be interpreted as an inertial gradient method for \eqref{eq:MOP1}. We show that it shares many properties with its continuous counterpart and that iterates defined by this algorithm converge weakly to Pareto critical points. To the best of our knowledge this is the first multiobjective method involving a constant momentum term with guaranteed convergence to Pareto critical solutions.

In a further step, we introduce time-dependent friction and informally define the following \emph{multiobjective gradient-like system with asymptotically vanishing damping}
\begin{align}
\ddot{x}(t) + \frac{\alpha}{t}\dot{x}(t) + \proj_{C(x(t))}{(-\ddot{x}(t))} = 0.
\tag{MAVD}
    \label{eq:MAVD}
\end{align}
In the single objective setting this system simplifies to the following inertial system with \emph{asymptotically vanishing damping}
\begin{align}
    \ddot{x}(t) + \frac{\alpha}{t}\dot{x}(t) + \nabla f(x(t)) = 0.
    \tag{AVD}
    \label{eq:AVD}
\end{align}
It is well-known that \eqref{eq:AVD} is naturally linked with Nesterov's accelerted gradient method \cite{Su2014, Attouch2015_2, Attouch2018, Attouch2019}. Discretizing the dynamical system \eqref{eq:MAVD}, and using our knowledge about \eqref{eq:IMOG'}, we derive an accelerated gradient method for multiobjective optimization that takes the form
\begin{align*}
    x^{k+1} = x^k + \frac{k-1}{k+2}(x^k-x^{k-1}) - s\sum_{i=1}^m \theta_i^k \nabla f_i(x^k),
\end{align*}
with appropriately chosen coefficients $s > 0$ and $\theta^k \in \R^m$. In a recent preprint, Tanabe, Fukuda and Yamashita derive an accelerated proximal gradient method for multiobjective optimization using the concept of merit functions \cite{Tanabe2022_2}. We show that the method we derive from the differential equation \eqref{eq:MAVD} achieves the same convergence rate of order $\mathcal{O}(k^{-2})$ for the function values, measured with a so-called merit function.

This remainder of the paper is organized according to the novelties described above. After introducing some basic definitions and notation in Section \ref{sec:background}, we prove that solutions to the system \eqref{eq:IMOG'} exist in finite-dimensional Hilbert spaces in Section \ref{sec:global_existence}, and that they converge to Pareto critical points in Section \ref{sec:asymptotic analysis}. Based on that, we derive a discrete optimization algorithm from an explicit discretization on \eqref{eq:IMOG'} and show that the iterates defined by this method converge weakly to Pareto critical points, in Section \ref{sec:interntial_multi_grad_algo}. We then introduce Nesterov Acceleration and prove an improved convergence result in Section \ref{sec:acc_multi_grad_algo}, and a further improvement in numerical efficiency is discussed in Section \ref{sec:improving numerical efficiency}. These two central algorithms are summarized in Algorithms \ref{algo:ACC_GRAD} and \ref{algo:ACC_GRAD_wo_Q} in the respective sections. We compare these on convex and non-convex example problems in Section \ref{sec:numerical_experiments}, and conclude our findings and list future research projects in Section \ref{sec:conclusion}.
\section{Background}
\label{sec:background}
\subsection{Notations}
Throughout this paper, $\H$ is a real Hilbert space with inner product $\langle \cdot, \cdot \rangle$ and induced norm $\lVert \cdot \rVert$. The set $\Delta_m \coloneqq \left\lbrace \alpha \in \R^m \,\,:\,\, \alpha \ge 0, \,\,\text{and}\,\, \sum_{i=1}^m \alpha_i = 1 \right\rbrace$ is the unit simplex. We denote the open ball with radius $\delta > 0$ and center $x$ by $B_{\delta}(x) \coloneqq \left\lbrace y \in \H \,\, : \,\, \lVert y - x \rVert < \delta \right\rbrace$. The closed ball with radius $\delta > 0$ and center $x$ is denoted by $\overline{B_{\delta}(x)}$. For a set of vectors $\left\lbrace \xi_1, \cdots, \xi_k \right\rbrace \subseteq \H$ we denote the convex hull of these vectors by $\co(\left\lbrace \xi_1, \cdots, \xi_k \right\rbrace) \coloneqq \left\lbrace \sum_{i = 1}^m \alpha_i \xi_i \,\, : \,\, \alpha \in \Delta_m \right\rbrace$. For a closed convex set $C \subseteq \H$ the projection of a vector $x \in \H$ onto $C$ is $\proj_C(x) \coloneqq \argmin_{y \in \H} \lVert y - x \rVert^2$. For two vectors $x, y \in \R^k$, we define the partial order $x \le y :\Leftrightarrow x_i \le y_i$ for all $i = 1,\dots, k$. We define $\ge$, $<$, $>$ on $\R^k$ analogously. When we treat dynamical systems, $t \in \R$ and $x \in \H$ are the time and state variable, respectively. We denote trajectories in $\H$ with $t\mapsto x(t)$ with first derivative $\dot{x}(t)$ and second derivative $\ddot{x}(t)$.
\subsection{Multiobjective Optimization}
Consider the unconstrained multiobjective optimization problem
\leqnomode
\begin{align}
\tag{MOP}
    \min_{x \in \H} \left[ 
    \begin{array}{c}
        f_1(x)  \\
        \vdots \\
        f_m(x)
    \end{array}
    \right],
    \label{eq:MOP}
\end{align}
with at least once differentiable objective functions $f_i:\H \to \R$ for $i = 1, \dots, m$. The definitions in this subsection are aligned with \cite{Miettinen1998}.
\begin{mydef}[\cite{Miettinen1998},~pp.10-20]
Consider the optimization problem \eqref{eq:MOP}.
\begin{enumerate}[i)]
    \item A point $x^* \in \H$ is Pareto optimal if there does not exist another point $x \in \H$ such that $f_i(x) \le f_i(x^*)$ for all $i = 1,\dots,m,$ and $f_j(x) < f_j(x^*)$ for at least one index $j$. The set of all Pareto optimal points is the Pareto set, which we denote with $P$. 
    \item A point $x^* \in \H$ is locally Pareto optimal if there exists $\delta > 0$ such that $x^*$ is Pareto optimal in $B_{\delta}(x^*)$. 
    \item A point $x^* \in \H$ is weakly Pareto optimal if there does not exist another vector $x \in \H$ such that $f_i(x) < f_i(x^*)$ for all $i = 1,\dots, m$.
    \item A point $x^* \in \H$ is locally weakly Pareto optimal if there exists $\delta > 0$ such that $x^*$ is weakly Pareto optimal in $B_{\delta}(x^*)$.
\end{enumerate}
\end{mydef}
In this paper, we treat convex MOPs. Therefore, the objective functions $f_i$ are convex for all $i = 1, \dots, m$. In this setting, every locally (weakly) Pareto optimal point is also (weakly) Pareto optimal. For unconstrained MOPs, the so-called Karush-Kuhn-Tucker conditions can be written as follows.
\begin{mydef}
A point $x^* \in \H$ satisfies the Karush-Kuhn-Tucker conditions if there exists $\alpha \in \Delta_m$ such that $\sum_{i=1}^m \alpha_i \nabla f_i(x^*) = 0$. If $x^*$ satisfies the Karush-Kuhn-Tucker conditions, we call it Pareto critical.
\end{mydef}
The condition $0 \in \co\left(\left\lbrace \nabla f_i(x^*) \,\,:\,\, i = 1, \dots, m \right\rbrace \right)$ is equivalent to the Karush-Kuhn-Tucker conditions. Analogously to the single objective setting, criticality of a point is a necessary condition for optimality. In the convex setting, the KKT conditions are also sufficient conditions for weak Pareto optimality. If we denote the Pareto set by $P$, the weak Pareto set by $P_w$ and the Pareto critical set by $P_c$ in the setting of smooth and convex multiobjective optimization, we observe the relation
\begin{align*}
    P \subset P_w = P_c.
\end{align*}
\subsection{Accelerated Methods for Multiobjective Optimization}
Accelerated methods for multiobjective optimization are not sufficiently discussed from a theoretical point of view in the literature yet. In \cite{ElMoudden2021} El Moudden and El Moutasim propose an accelerated method for multiobjective optimization which incorporates the multiobjective descent direction by Fliege \cite{Fliege2000} and the same acceleration scheme as in Nesterov's accelerated method \cite{Nesterov1983}. El Moudden and El Moutasim prove a convergence rate of the function values with rate $\mathcal{O}(k^{-2})$. Their proof relies on the restrictive assumption that the Lagrange multipliers of the quadratic subproblem, that is used to compute the step direction in every iteration, remain fixed from a certain point on. Under this assumption, the method simplifies to Nesterov's method for single objective optimization problems applied to a weighted sum of the objective functions with fixed weights. Only recently, Tanabe, Fukuda and Yamashita derived an accelerated proximal gradient method for multiobjective optimization problems in \cite{Tanabe2022_2}. They developed their method using the concept of merit functions (see Subsection \ref{subsec:merit_functions}) and show that the function values converge with rate $\mathcal{O}(k^{-2})$ without additional assumptions on the Lagrange multipliers.
\subsection{Dynamical Systems linked to Multiobjective Optimization}
\label{subsec:in_grad_sys_mutli_obj}
In \cite{Smale1973} Smale presents the idea of treating multiobjective optimization problem with a continuous time perspective that is motivated from an economical point of view using utility functions in a multi-agent framework. The simplest dynamical system for multiobjective optimization problems is the \emph{multiobjective gradient system}
\begin{align}
    \label{eq:MOG}
    \tag{MOG}
    \dot{x}(t) + \proj_{C(x(t))}(0) = 0,
\end{align}
where $C(x(t)) = \co \left( \lbrace \nabla f_i(x(t)) \,:\, i = 1, \dots, m \rbrace \right)$. This system is already treated in \cite{Henry1973} and in addition by Cornet in \cite{Cornet1979, Cornet1981, Cornet1983} as a dynamical system for resource allocation from an economical point of view. In \cite{Miglierina2004, Attouch2014} the system \eqref{eq:MOG} gets introduced as a tool for multiobjective optimization. The system \eqref{eq:MOG} can also be seen as a continuous version of the multiobjective steepest descent method by Fliege \cite{Fliege2000}. In the single objective setting ($m=1$), the system \eqref{eq:MOG} simplifies to the \emph{steepest descent dynamical system} $\dot{x}(t) + \nabla f(x(t)) = 0$. Generalizations of \eqref{eq:MOG} are treated in \cite{Attouch2014, Attouch2015_3}. In \cite{Attouch2014} Attouch and Goudou discuss a dynamical system for constrained minimization and in \cite{Attouch2015_3} Attouch, Garrigos and Goudou present a differential inclusion for constrained nonsmooth optimization.\\
In \cite{Attouch2015}, Attouch and Garrigos introduce inertia in the system \eqref{eq:MOG} and define the following \emph{inertial multiobjective gradient-like dynamical system}
\begin{align}
\label{eq:IMOG}
\tag{IMOG}
    m\ddot{x}(t) + \gamma \dot{x}(t) + \proj_{C(x(t))}(0) = 0.
\end{align}
Trajectories of \eqref{eq:IMOG} converge weakly to Pareto optimal solutions given $\gamma^2 > mL$, where $L$ is a common Lipschitz constant for the gradients of the objective functions.
\subsection{Merit Functions}
\label{subsec:merit_functions}
A merit function associated with an optimization problem is a function that returns zero at an optimal solution and which is strictly positive otherwise. An overview on merit functions used in multiobjective optimization is given in \cite{Tanabe2020}. In our proofs we use the merit function
\reqnomode
\begin{align}
    u_0(x) \coloneqq \sup_{z \in \H} \min_{i = 1,\dots, m} f_i(x) - f_i(z),
    \label{eq:merit_function}
\end{align}
which satisfies the following statement.
\begin{theorem}[\cite{Tanabe2020} Theorem 3.1]
It holds that $u_0(x)\ge 0$ for all $x \in \H$. Moreover, $x \in \H$ is weakly Pareto optimal for \eqref{eq:MOP}, if and only if $u_0(x) = 0$.
\end{theorem}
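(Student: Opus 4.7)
The plan is to prove both assertions by directly unpacking the definition of $u_0$ and the definition of weak Pareto optimality; no heavy machinery is needed.

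First I would establish nonnegativity: for any fixed $x\in\H$, the choice $z=x$ is admissible in the supremum defining $u_0(x)$, and yields $\min_{i=1,\dots,m} \bigl(f_i(x)-f_i(x)\bigr) = 0$. Hence $u_0(x)\ge 0$.

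Next I would prove the equivalence by showing the contrapositive in one direction and the direct implication in the other. Suppose $x$ is not weakly Pareto optimal. Then by definition there exists $z\in\H$ with $f_i(z)<f_i(x)$ for every $i=1,\dots,m$, so $\min_i\bigl(f_i(x)-f_i(z)\bigr)>0$, whence $u_0(x)>0$. Conversely, suppose $x$ is weakly Pareto optimal. Then for every $z\in\H$ there is at least one index $i_z\in\{1,\dots,m\}$ with $f_{i_z}(z)\ge f_{i_z}(x)$, i.e.\ $f_{i_z}(x)-f_{i_z}(z)\le 0$. This gives $\min_i\bigl(f_i(x)-f_i(z)\bigr)\le 0$ for every $z$, and hence $u_0(x)\le 0$. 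Combined with the first part we obtain $u_0(x)=0$.

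Since both assertions reduce to elementary inspections of the inner minimum and the choice of competitors $z$, there is no real obstacle; the only point worth being careful about is not to mistake the supremum for a maximum (it may not be attained, but the argument uses only one-sided bounds, so attainment is irrelevant). The statement is cited directly from \cite{Tanabe2020}, so an alternative would be simply to refer to that reference, but the self-contained argument above is short enough to include in full.
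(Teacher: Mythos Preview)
Your proof is correct. The paper does not give its own proof of this statement; it simply cites it as Theorem~3.1 of \cite{Tanabe2020}, so there is nothing to compare against, and your self-contained argument is exactly the standard elementary verification one would expect.
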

Additionally, $u_0(x)$ is lower semicontinuous. Therefore, if $(x^k)_{k \ge 0}$ is a sequence with $u_0(x^k) \to 0$, every cluster point of $(x^k)_{k \ge 0}$ is weakly Pareto optimal. This motivates the usage of $u_0(x)$ as a measure of complexity for multiobjective optimization methods. The function $u_0(x)$ is not the only merit function for multiobjective optimization problems, see also \cite{Chen2000, Yang2002, Liu2009} and further references in \cite{Tanabe2020}.
\section{Global Existence in Finite Dimensions}
\label{sec:global_existence}
In this section, we show that solutions exist for the Cauchy problem related to
\eqref{eq:IMOG'}, i.e
\leqnomode
\begin{align}
\label{eq:CP}
\tag{CP}
    \left\lvert 
    \begin{array}{l}
        \ddot{x}(t) + \alpha\dot{x}(t) + \proj_{C(x(t))}{(-\ddot{x}(t))} = 0,  \\
        \\
        x(0) = x_0, \quad \dot{x}(0) = v_0,
    \end{array}
    \right.
\end{align}
\reqnomode
with initial data $x_0, v_0 \in \H$.
To this end, we show that for this system solutions exist if there exists a solution for a first order differential inclusion
\begin{align*}
    (\dot{u}, \dot{v}) \in F(u,v),
\end{align*}
with a set-valued mapping $F:\H\times\H \rightrightarrows \H\times\H$. Then, we use an existence theorem for differential inclusions from \cite{Aubin2012}. Our argument works only in finite-dimensional Hilbert spaces. Thus, we assume $\dim(\H) < +\infty$ from here on. In our context, the following set-valued mapping is of interest:
\begin{align}
\label{eq:set_valued_F}
    F: \H \times \H \rightrightarrows \H\times \H, \quad (u,v) \mapsto \lbrace v \rbrace \times \left(\big( - \argmin_{z\in C(u)} \langle z, v \rangle \big) - \alpha v \right)
\end{align}
As stated above, $C(u) \coloneqq \co \left(\lbrace \nabla f_i(u) \, : \, i = 1,\dots,m \rbrace \right)$. We can show that \eqref{eq:CP} has a solution if the differential inclusion
\leqnomode
\begin{align}
    \label{eq:DI}
    \tag{DI}
    \left\lvert 
    \begin{array}{l}
        (\dot{u}(t), \dot{v}(t)) \in F(u,v),  \\
        \\
        (u(0), v(0)) = (u_0, v_0),
    \end{array}
    \right.
\end{align}
\leqnomode
with appropriate initial data $u_0, v_0$ has a solution.
\subsection{Existence of Solutions to (DI)}
\label{subsec:ex_sol_DI}
To show that there exist solutions to \eqref{eq:DI}, we investigate the set-valued mapping $(u,v) \rightrightarrows F(u,v)$ defined in \eqref{eq:set_valued_F}. The basic definitions for set-valued mappings used in this subsection can be found in \cite{Aubin2012}. 
\begin{myprop}
For all $(u,v) \in \H \times \H$, $F(u,v) \subset \H\times\H$ is convex and compact. 
\label{prop:set_valued_F_compact_image}
\end{myprop}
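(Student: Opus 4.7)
The plan is to decompose $F(u,v)$ into pieces whose convexity and compactness are straightforward, then combine. Writing $F(u,v) = \{v\} \times G(u,v)$ with
\[
G(u,v) \coloneqq -\argmin_{z \in C(u)} \langle z, v \rangle - \alpha v,
\]
it suffices to show that both factors are convex and compact in $\H$, since a finite Cartesian product of convex compact sets is convex and compact. The first factor $\{v\}$ is a singleton and thus trivially has both properties, so all the work is in analyzing $G(u,v)$. Moreover, since $w \mapsto -w - \alpha v$ is an affine homeomorphism of $\H$, it preserves convexity and compactness, and the task reduces to showing that $A(u,v) \coloneqq \argmin_{z \in C(u)} \langle z, v \rangle$ is convex and compact.

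First I would note that $C(u)$ is the convex hull of the finite set $\{\nabla f_i(u)\}_{i=1}^m$ in the finite-dimensional space $\H$, hence a polytope; in particular it is convex, closed, and bounded, therefore compact. The linear functional $z \mapsto \langle z, v \rangle$ is continuous, so by the Weierstrass theorem its minimum over $C(u)$ is attained and $A(u,v)$ is nonempty. Denoting this minimum value by $\mu$, we have $A(u,v) = C(u) \cap \{z \in \H : \langle z, v \rangle = \mu\}$, which is the intersection of a compact convex set with a closed affine hyperplane (or all of $\H$, if $v = 0$). Intersections of convex sets are convex, and closed subsets of compact sets are compact, so $A(u,v)$ is convex and compact.

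Chaining these observations: $-A(u,v) - \alpha v = G(u,v)$ inherits convexity and compactness from $A(u,v)$, and then $F(u,v) = \{v\} \times G(u,v)$ inherits them from its factors in $\H \times \H$.

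There is no real obstacle here; the only point worth being careful about is that finite-dimensionality of $\H$ is implicitly used twice — once to guarantee that the convex hull of finitely many points is compact (in infinite dimensions this still holds, actually, so even this is robust), and more essentially in the standing assumption of the section $\dim(\H) < +\infty$. In the degenerate case $v = 0$ one has $A(u,0) = C(u)$, which is still convex and compact, so the argument goes through uniformly.
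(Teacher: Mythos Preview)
Your proof is correct and is precisely the unpacking of what the paper means by ``follows directly from the definition''; there is no alternative approach to compare, as the paper offers no details beyond that one sentence. Your remark that the convex hull of finitely many points is compact even in infinite dimensions is accurate, so the proposition itself does not actually depend on the standing assumption $\dim(\H)<+\infty$.
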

\begin{proof}
The statement follows directly from the definition.
\end{proof}
To use an existence theorem from \cite{Aubin2012}, we need to show that $(u,v) \rightrightarrows F(u,v)$ is upper semicontinuous. Showing this is elementary. We omit the full proof here but sketch a possible way to prove this result.
\begin{mylemma}
Let $C(u) \coloneqq \co(\lbrace c_i(u)\, :\, i = 1,\dots, m\rbrace)$ with $c_i:\H \to\H,\, u\to c_i(u)$ continuous for $i = 1,\dots, m$. Let $(u_0, v_0) \in \H \times \H$ be fixed.\\
Then, for all $\varepsilon > 0$ there exists an $\delta > 0$ such that for all $(u,v) \in \H\times\H$ with $\lVert u - u_0 \rVert < \delta$ and $\lVert v - v_0 \rVert < \delta$ and for all $z \in \argmin_{z \in C(u)} \langle z, v \rangle$ there exists $z_0 \in \argmin_{z_0 \in C(u_0)} \langle z_0, v_0 \rangle$ with $\lVert z - z_0 \rVert < \varepsilon$.
\label{lem:lin_opt_pert}
\end{mylemma}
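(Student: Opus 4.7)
The plan is to exploit the finite-vertex structure of $C(u)$. Because $C(u) = \co(\{c_1(u),\dots,c_m(u)\})$, a linear functional attains its minimum over $C(u)$ at some vertex, so
\[
\argmin_{z \in C(u)}\langle z, v\rangle \;=\; \co\bigl(\{c_i(u) : i \in I(u,v)\}\bigr),
\]
where $I(u,v) \coloneqq \{i : \langle c_i(u), v\rangle = \min_{j}\langle c_j(u), v\rangle\}$. The lemma will follow once I show (a) the active-index set at nearby $(u,v)$ is contained in the active-index set at $(u_0,v_0)$, and (b) the vertices themselves move continuously with $u$.

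For step (a), set $m_0 \coloneqq \min_j \langle c_j(u_0), v_0\rangle$ and $I_0 \coloneqq I(u_0, v_0)$; if $I_0 \ne \{1,\dots,m\}$, put $\eta \coloneqq \min_{i\notin I_0}\langle c_i(u_0),v_0\rangle - m_0 > 0$. The maps $(u,v) \mapsto \langle c_i(u), v\rangle$ are continuous (continuity of each $c_i$ combined with continuity of the inner product on $\H \times \H$ in finite dimension), so there exists $\delta_1 > 0$ such that $\|u-u_0\| < \delta_1$ and $\|v-v_0\| < \delta_1$ imply $|\langle c_i(u),v\rangle - \langle c_i(u_0), v_0\rangle| < \eta/3$ for every $i$. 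Then every $i \in I_0$ satisfies $\langle c_i(u),v\rangle < m_0 + \eta/3$ while every $i \notin I_0$ satisfies $\langle c_i(u),v\rangle > m_0 + 2\eta/3$, which forces $I(u,v) \subseteq I_0$. The case $I_0 = \{1,\dots,m\}$ is trivial since then step (a) is automatic.

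For step (b), take any $z \in \argmin_{z \in C(u)}\langle z, v\rangle$ and write $z = \sum_{i \in I(u,v)} \lambda_i c_i(u)$ with $\lambda \in \Delta_{|I(u,v)|}$. Define the candidate $z_0 \coloneqq \sum_{i \in I(u,v)} \lambda_i c_i(u_0)$. Because $I(u,v) \subseteq I_0$, the point $z_0$ is a convex combination of vertices each realizing the minimum of $\langle \cdot, v_0\rangle$ over $C(u_0)$, so $z_0 \in \argmin_{z_0 \in C(u_0)}\langle z_0, v_0\rangle$. The estimate
\[
\|z - z_0\| \;\le\; \sum_{i \in I(u,v)} \lambda_i \|c_i(u) - c_i(u_0)\| \;\le\; \max_{i=1,\dots,m} \|c_i(u) - c_i(u_0)\|,
\]
together with continuity of the $c_i$, lets us further shrink $\delta \in (0,\delta_1]$ so that $\|z - z_0\| < \varepsilon$. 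The only delicate point is the direction of the inclusion in step (a): nearby $(u,v)$ can only preserve a subset of the originally active vertices, never activate new ones, and the $\eta$-based separation is what enforces this. Everything else reduces to continuity of the finite family $c_1,\dots,c_m$.
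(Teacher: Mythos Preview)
Your argument is correct and is exactly the kind of elementary vertex-based reasoning the paper has in mind; the paper itself omits the proof entirely, writing only ``The proof follows straightforward.'' One small remark: the joint continuity of $(u,v)\mapsto\langle c_i(u),v\rangle$ holds in any Hilbert space via norm continuity of the inner product, so the qualifier ``in finite dimension'' is unnecessary (though harmless, since Section~\ref{sec:global_existence} already restricts to $\dim(\H)<+\infty$).
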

\begin{proof}
The proof follows straightforward.
\end{proof}
\begin{myprop}
The set-valued mapping $(u,v) \rightrightarrows F(u,v)$ is upper semicontinuous.
\label{prop:F_usc}
\end{myprop}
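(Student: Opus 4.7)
The plan is to verify upper semicontinuity directly from its standard $\varepsilon$-$\delta$ characterization for set-valued maps with compact images (which is the form used in \cite{Aubin2012} and applies here by Proposition~\ref{prop:set_valued_F_compact_image}): it suffices to show that for every $\varepsilon > 0$ there is $\delta > 0$ such that $F(u,v) \subset F(u_0,v_0) + \overline{B_\varepsilon(0)}$ whenever $\lVert u - u_0 \rVert < \delta$ and $\lVert v - v_0 \rVert < \delta$. The whole argument then reduces to combining Lemma~\ref{lem:lin_opt_pert} with the Lipschitz continuity of $v \mapsto -\alpha v$.

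First I would split $F$ into its two coordinates, $F(u,v) = \lbrace v \rbrace \times G(u,v)$ with
\[
    G(u,v) \coloneqq -\argmin_{z \in C(u)} \langle z, v \rangle - \alpha v.
\]
The first coordinate is a singleton that depends Lipschitz-continuously on $v$, so any $\delta \le \varepsilon$ handles it. For the second coordinate, pick an arbitrary element $-z - \alpha v \in G(u,v)$ with $z \in \argmin_{z' \in C(u)} \langle z', v \rangle$. Applying Lemma~\ref{lem:lin_opt_pert} with $c_i = \nabla f_i$ (continuous in finite dimension under the standing smoothness assumption) supplies, for sufficiently small $\delta$, a point $z_0 \in \argmin_{z' \in C(u_0)} \langle z', v_0 \rangle$ with $\lVert z - z_0 \rVert < \varepsilon/2$. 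The triangle inequality then gives
\[
    \lVert (-z - \alpha v) - (-z_0 - \alpha v_0) \rVert \le \lVert z - z_0 \rVert + \alpha \lVert v - v_0 \rVert < \tfrac{\varepsilon}{2} + \alpha \delta,
\]
and $-z_0 - \alpha v_0$ lies in $G(u_0, v_0)$. Shrinking $\delta$ further to ensure $\alpha \delta < \varepsilon/2$ yields $G(u,v) \subset G(u_0, v_0) + \overline{B_\varepsilon(0)}$.

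Finally I would combine the two coordinate-wise estimates with the sum norm on $\H \times \H$ to conclude $F(u,v) \subset F(u_0, v_0) + \overline{B_{2\varepsilon}(0)}$, from which upper semicontinuity follows after a harmless redefinition of $\varepsilon$. I do not expect any genuine obstacle: the only nontrivial ingredient is the perturbation estimate for a parametrized linear program over the continuously varying polytope $C(u)$, and that has already been absorbed into Lemma~\ref{lem:lin_opt_pert}. Everything remaining in the proof of the proposition is bookkeeping with the triangle inequality and the standard reformulation of upper semicontinuity for compact-valued maps.
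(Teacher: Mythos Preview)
Your proposal is correct and follows essentially the same route as the paper: both arguments reduce the proposition to the $\varepsilon$--$\delta$ inclusion $F((u_0,v_0)+B_\delta(0)) \subset F(u_0,v_0)+B_\varepsilon(0)$, obtain it by combining Lemma~\ref{lem:lin_opt_pert} with the obvious continuity of the affine part $v \mapsto -\alpha v$, and then invoke compactness of the images (Proposition~\ref{prop:set_valued_F_compact_image}) to pass to upper semicontinuity. You have simply written out in detail what the paper summarizes as ``using only continuity arguments.''
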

\begin{proof}
Using Lemma \ref{lem:lin_opt_pert} we can show in a straightforward manner
\begin{align*}
    F((u_0, v_0) + B_{\delta}(0)) \subset F(u_0, v_0) + B_{\varepsilon}(0)
\end{align*}
using only continuity arguments. Then, the statement follows by the fact that $(u,v) \rightrightarrows F(u,v)$ is locally compact.
\end{proof}
\begin{myprop}
Let $\H$ have finite dimension. Then, the mapping
\begin{align*}
\phi : \H\times \H \to \H \times \H,\quad (u,v) \mapsto \left(v,\, \proj_{F(u,v)}(0) \right),
\end{align*}
is locally compact. 
\label{prop:loc_comp}
\end{myprop}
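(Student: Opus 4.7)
The plan is to exploit the finite dimension of $\H$ and reduce local compactness of $\phi$ to local boundedness: I show that every $(u_0,v_0)\in\H\times\H$ admits a bounded neighborhood whose image under $\phi$ is bounded, and then invoke Heine--Borel to conclude that this image is relatively compact. Since the first component of $\phi(u,v)$ is simply $v$, the whole task reduces to bounding the second component $\proj_{F(u,v)}(0)$ uniformly on a neighborhood of $(u_0,v_0)$.

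Concretely, I would fix $r>0$ and set $N \coloneqq \overline{B_r(u_0)} \times \overline{B_r(v_0)}$. On the compact set $\overline{B_r(u_0)}$ the continuous gradients $\nabla f_i$ are uniformly bounded, so there exists $M>0$ with $\lVert \nabla f_i(u)\rVert \le M$ for all $i=1,\dots,m$ and all $u\in \overline{B_r(u_0)}$. By the definition of the convex hull this yields $C(u) \subset \overline{B_M(0)}$, and hence $\argmin_{z\in C(u)}\langle z,v\rangle \subset \overline{B_M(0)}$ as well. Combining this with the obvious bound $\lVert v\rVert \le \lVert v_0\rVert + r$ on $N$, the set
\begin{align*}
F(u,v) \;=\; \{v\} \times \bigl(-\argmin_{z\in C(u)}\langle z,v\rangle - \alpha v\bigr)
\end{align*}
is contained in a single bounded subset of $\H\times\H$ whose diameter depends only on $r$, $\lVert v_0\rVert$, $M$ and $\alpha$. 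In particular, the projection $\proj_{F(u,v)}(0)$ lies in this bounded set (since it is itself an element of $F(u,v)$), so $\phi(N)$ is bounded.

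Finally, $\H\times\H$ is finite-dimensional by hypothesis, so bounded subsets are relatively compact. Therefore $\phi(N)$ is relatively compact, which is precisely the statement that $\phi$ is locally compact at $(u_0,v_0)$; since the point was arbitrary, the claim follows. There is no genuine obstacle in this argument -- the only point that requires a moment of care is the implicit use of continuity of $\nabla f_i$ (which is part of the standing smoothness assumption on \eqref{eq:MOP1}) to bound $C(u)$ uniformly on a compact neighborhood, and the fact that projections onto a bounded set are automatically bounded by the norm-bound of that set.
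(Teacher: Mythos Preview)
Your argument is correct and is precisely the fleshed-out version of what the paper sketches: in finite dimensions local compactness reduces to local boundedness, and you bound $F(u,v)$ uniformly on a compact neighborhood via continuity of the $\nabla f_i$, then invoke Heine--Borel. The paper's own proof is only a two-line remark to this effect (and additionally observes the converse, that local compactness of $\phi$ forces $\dim\H<+\infty$, which is not part of the proposition as stated).
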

\begin{proof}
If $\dim(\H) < + \infty$ the proof follows easily since all images $F(u,v)$ are compact and depend on $(u,v)$ in a well behaved manner. On the other hand, from $\phi$ being locally compact, we get that $v \mapsto v$ is locally compact which is equivalent to $\H$ being finite-dimensional. 
\end{proof}
The following existence theorem from \cite{Aubin2012} is applicable in our setting.
\begin{theorem}[\cite{Aubin2012}, p.~98, Theorem 3]
Let $X$ be a Hilbert space, $\Omega \subset \R \times X$ be an open subset containing $(0,x_0)$. Let $F$ be an upper semicontinuous map from $\Omega$ into the non-empty closed convex subsets of $X$. We assume that $(t,x) \mapsto \proj_{F(t,x)}(0)$ is locally compact. Then, there exists $T > 0$ and an absolutely continuous function $x(\cdot)$ defined on $[0, T]$ which is a solution to the differential inclusion
\begin{align*}
    \dot{x}(t) \in F(t, x(t)), \quad x(0) = x_0.
\end{align*}
\label{thm:existence}
\end{theorem}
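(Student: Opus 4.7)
The plan is to construct a local solution by an Euler-type discretization and then pass to the limit using upper semicontinuity of $F$ combined with the local compactness hypothesis. First, I would use the openness of $\Omega$ to pick $T_0 > 0$ and $r > 0$ such that the closed tube $K \coloneqq [0, T_0] \times \overline{B_r(x_0)}$ lies in $\Omega$. By local compactness of $(t,x) \mapsto \proj_{F(t,x)}(0)$ and the upper semicontinuity of $F$, the image $\proj_{F(K)}(0)$ is relatively compact and in particular norm-bounded by some constant $M > 0$, so the minimal-norm selection $v(t,x) \in F(t,x)$ satisfies $\lVert v(t,x) \rVert \le M$ on $K$. Fixing $T \coloneqq \min\{ T_0, r/M \}$ ensures that any curve starting at $x_0$ with velocity bounded by $M$ stays inside $\overline{B_r(x_0)}$ on $[0,T]$.

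Second, for each $n$ I would partition $[0,T]$ into subintervals of length $h_n = T/n$ with nodes $t_k = k h_n$, and define a piecewise affine approximation $x_n$ by setting $x_n(0) = x_0$ and $x_n(t_{k+1}) = x_n(t_k) + h_n v_n^k$, where $v_n^k$ is the minimal-norm element of $F(t_k, x_n(t_k))$. The bound $M$ gives that $\{x_n\}$ is $M$-Lipschitz and takes values in $\overline{B_r(x_0)}$, so it is uniformly bounded and equicontinuous. The local compactness of $(t,x) \mapsto \proj_{F(t,x)}(0)$ is exactly what ensures that $\{x_n(t)\}$ is relatively compact at each $t$; hence by the Ascoli--Arzel\`a theorem a subsequence converges uniformly to some $M$-Lipschitz (hence absolutely continuous) curve $x$ on $[0,T]$, with $x(0) = x_0$, and the derivatives $\dot x_n$ converge weakly in $L^2([0,T]; X)$ to $\dot x$.

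The main obstacle is to verify the inclusion $\dot x(t) \in F(t, x(t))$ for almost every $t$. The step functions $g_n(t) \coloneqq v_n^k$ for $t \in [t_k, t_{k+1})$ satisfy $g_n \in F(t_n^*, x_n(t_n^*))$ with $t_n^* \to t$ and $x_n(t_n^*) \to x(t)$, and $\dot x_n = g_n$ almost everywhere. By Mazur's lemma, finite convex combinations of $\{g_n\}_{n \ge N}$ converge strongly to $\dot x$ in $L^2$, and after extracting a further subsequence pointwise almost everywhere. Upper semicontinuity of $F$ implies that for any $\varepsilon > 0$ and $n$ large enough, $g_n(t) \in F(t, x(t)) + B_\varepsilon(0)$; since $F(t, x(t))$ is closed and convex, the same neighbourhood contains the convex combinations, and letting $\varepsilon \downarrow 0$ yields $\dot x(t) \in F(t, x(t))$ almost everywhere.

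The hard part is this last limit passage, because in an infinite-dimensional Hilbert space the weak limit of $\dot x_n$ need not, a priori, lie in the fibre $F(t, x(t))$. It is precisely the local compactness hypothesis that restores the strong compactness needed to invoke Mazur's lemma and convert weak convergence of the derivatives into membership in the (closed convex) value of $F$; without it, the upper semicontinuity of $F$ alone would not close the argument.
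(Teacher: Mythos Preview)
The paper does not prove this theorem at all: it is quoted verbatim from Aubin's monograph \cite{Aubin2012} and used as a black-box existence result, so there is no ``paper's own proof'' to compare against. Your sketch is, in outline, the standard Euler--discretization argument that appears in Aubin--Cellina and is therefore an appropriate substitute.

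One point deserves tightening. You write that ``the image $\proj_{F(K)}(0)$ is relatively compact'' for the closed tube $K = [0,T_0] \times \overline{B_r(x_0)}$, invoking local compactness and upper semicontinuity. In an infinite-dimensional Hilbert space $K$ is \emph{not} compact (closed balls are not compact), so you cannot cover $K$ by finitely many neighbourhoods on which the minimal-norm selection has relatively compact image. The correct order of quantifiers is: first apply local compactness of $(t,x)\mapsto \proj_{F(t,x)}(0)$ at the single point $(0,x_0)$ to obtain an open neighbourhood $U$ with $\proj_{F(U)}(0)$ relatively compact, and \emph{then} choose $T_0,r$ so that $K\subset U$. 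With this adjustment the bound $M$, the Euler scheme, the Ascoli--Arzel\`a step (pointwise relative compactness of $x_n(t)$ follows since $x_n(t)-x_0$ lies in $t$ times the closed convex hull of a compact set), and the Mazur/upper-semicontinuity closing argument all go through as you describe.
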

We are finally in the position to state an existence theorem for \eqref{eq:DI}. 
\begin{theorem}
Assume $\H$ is finite-dimensional and that the gradients of the objective function $\nabla f_i$ are globally Lipschitz continuous. Then, for all $(u_0, v_0) \in \H \times \H$ there exists $T > 0$ and an absolutely continuous function $(u(\cdot), v(\cdot))$ defined on $[0, T]$ which is a solution to the differential inclusion \eqref{eq:DI}.
\label{thm:DI_sol_exist_fin}
\end{theorem}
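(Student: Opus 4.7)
The plan is to reduce the statement directly to Theorem \ref{thm:existence} (Aubin), since Propositions \ref{prop:set_valued_F_compact_image}, \ref{prop:F_usc} and \ref{prop:loc_comp} have already established essentially all the required hypotheses for the set-valued mapping $F$ defined in \eqref{eq:set_valued_F}. The proof is therefore more of an assembly of existing facts than a new construction.

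First, I would cast \eqref{eq:DI} as a time-independent differential inclusion on the product Hilbert space $X \coloneqq \H \times \H$, by setting $\tilde{F}(t,(u,v)) \coloneqq F(u,v)$ for every $t \in \R$. Since $F$ does not depend on $t$, upper semicontinuity of $F$ transfers trivially to $\tilde{F}$ on the open set $\Omega \coloneqq \R \times X$, which clearly contains $(0,(u_0,v_0))$. In particular, the time-dependent hypothesis of Theorem \ref{thm:existence} is satisfied in the strongest possible way.

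Next, I would verify the three structural hypotheses of Theorem \ref{thm:existence} for $\tilde{F}$. Non-emptiness, convexity and closedness (via compactness) of the images follow from Proposition \ref{prop:set_valued_F_compact_image}, together with the observation that $C(u)$ is the convex hull of finitely many vectors and is therefore compact and convex, so that $\argmin_{z \in C(u)} \langle z, v \rangle$ is non-empty. Upper semicontinuity is precisely Proposition \ref{prop:F_usc}; the hypothesis of Lemma \ref{lem:lin_opt_pert} holds because the global Lipschitz assumption on $\nabla f_i$ in particular implies their continuity. Local compactness of the map $(t,(u,v)) \mapsto \proj_{\tilde{F}(t,(u,v))}(0)$ is inherited from the local compactness of $\phi$ established in Proposition \ref{prop:loc_comp}, using $\dim(\H) < +\infty$; indeed $\proj_{F(u,v)}(0)$ is simply the second component of $\phi(u,v)$.

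Applying Theorem \ref{thm:existence} with $x_0 \coloneqq (u_0, v_0) \in X$ then yields some $T > 0$ and an absolutely continuous map $(u(\cdot), v(\cdot))$ on $[0,T]$ solving \eqref{eq:DI} with the prescribed initial datum. I do not anticipate any real obstacle here; the only subtle points are to make the time-independence reformulation explicit so that Aubin's time-dependent theorem genuinely applies, and to observe that the global Lipschitz assumption enters the present statement only through continuity of $\nabla f_i$. The stronger quantitative Lipschitz information will presumably be needed in the next steps, both to extend the local solution of \eqref{eq:DI} to a global one and to identify solutions of \eqref{eq:DI} with solutions of the original Cauchy problem \eqref{eq:CP}.
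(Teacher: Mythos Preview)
Your proposal is correct and follows essentially the same approach as the paper: the paper's proof simply states that the result follows immediately from Propositions \ref{prop:F_usc} and \ref{prop:loc_comp}, which verify the hypotheses needed to apply Theorem \ref{thm:existence}. Your write-up is more explicit about the time-independent reformulation and the role of each proposition, but the underlying argument is identical.
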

\begin{proof}
The proof follows immediately from Propositions \ref{prop:F_usc} and \ref{prop:loc_comp} which show that the set-valued mapping satisfies all conditions required for Theorem \ref{thm:existence}. 
\end{proof}
In the following, we show that under additional conditions on the objective functions $f_i$ there exist solutions defined on $[0, +\infty)$. The extension of the solution works using a standard argument. We show that the solutions to \eqref{eq:DI} remain bounded. Then, we use Zorn's Lemma to retrieve a contradiction if there is a maximal solution that is not defined on $[0, +\infty)$.
\begin{theorem}
Assume $\H$ is finite-dimensional and that the gradients of the objective function $\nabla f_i$ are globally Lipschitz continuous. Then, for all $(u_0, v_0) \in \H \times \H$ there exists an absolutely continuous function $(u(\cdot), v(\cdot))$ defined on $[0, +\infty)$ which is a solution to the differential inclusion \eqref{eq:DI}.
\label{thm:DI_sol_exist}
\end{theorem}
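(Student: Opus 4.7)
The plan is to combine the local existence result Theorem~\ref{thm:DI_sol_exist_fin} with a standard Zorn's Lemma continuation argument, as hinted at by the authors. The central analytic task reduces to showing that any solution of \eqref{eq:DI} on a bounded interval $[0,T)$ stays bounded, so that it can be extended past $T$.

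I would first introduce the set $\mathcal{S}$ of absolutely continuous solutions of \eqref{eq:DI} with initial data $(u_0,v_0)$, partially ordered by extension. Theorem~\ref{thm:DI_sol_exist_fin} shows $\mathcal{S}\neq\emptyset$, and every chain has an upper bound given by its union. Zorn's Lemma then yields a maximal solution $(u^*,v^*)$ defined on some maximal right-open interval $[0,T_{\max})$. The goal becomes $T_{\max}=+\infty$, proved by contradiction.

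Next I would derive an a priori bound, assuming $T_{\max}<+\infty$. For almost every $t\in[0,T_{\max})$ there is a selection $\xi(t)\in\argmin_{z\in C(u^*(t))}\langle z,v^*(t)\rangle$ with $\dot{v}^*(t)=-\xi(t)-\alpha v^*(t)$. Since $\xi(t)\in C(u^*(t))=\co\{\nabla f_i(u^*(t))\}$ and the $\nabla f_i$ are globally $L$-Lipschitz, the triangle inequality gives an affine bound $\|\xi(t)\|\le a+b\,\|u^*(t)\|$ with $a,b\ge 0$ depending only on $L$ and $\max_i\|\nabla f_i(0)\|$. Differentiating the energy $\phi(t)\coloneqq\|u^*(t)\|^2+\|v^*(t)\|^2$ yields
\begin{align*}
    \dot\phi(t)
    \;=\; 2\langle u^*,v^*\rangle - 2\alpha\|v^*\|^2 - 2\langle v^*,\xi(t)\rangle
    \;\le\; K\bigl(1+\phi(t)\bigr)
\end{align*}
for a constant $K>0$ (using $2rs\le r^2+s^2$ together with the affine bound on $\|\xi\|$). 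Gronwall's Lemma then forces $\phi(t)\le(1+\phi(0))e^{Kt}-1$ on $[0,T_{\max})$, so both $u^*$ and $v^*$ remain bounded.

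Finally I would extend $(u^*,v^*)$ past $T_{\max}$. Boundedness of $(u^*,v^*)$ combined with the affine bound on $\xi$ makes $(\dot u^*,\dot v^*)$ essentially bounded, hence $(u^*,v^*)$ is Lipschitz on $[0,T_{\max})$ and extends continuously to values $(\tilde u,\tilde v)$ at $t=T_{\max}$. Applying Theorem~\ref{thm:DI_sol_exist_fin} with initial data $(\tilde u,\tilde v)$ produces an absolutely continuous solution on some $[T_{\max},T_{\max}+\tau]$, $\tau>0$, which after concatenation with $(u^*,v^*)$ contradicts maximality. The main obstacle I anticipate is the concatenation step: one must verify that the glued trajectory is absolutely continuous and that the inclusion $(\dot u,\dot v)\in F(u,v)$ still holds almost everywhere in a neighborhood of $T_{\max}$. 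This requires the upper semicontinuity of $F$, which is however already supplied by Proposition~\ref{prop:F_usc}, so the argument closes.
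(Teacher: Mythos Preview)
Your proposal is correct and follows essentially the same route as the paper: Zorn's Lemma to obtain a maximal solution, an affine bound $\lVert F(u,v)\rVert\le c(1+\lVert(u,v)\rVert)$ from global Lipschitz continuity of the $\nabla f_i$, a Gronwall-type estimate to preclude blow-up, and extension past $T_{\max}$ to reach a contradiction. The only cosmetic differences are that the paper works with $h(t)=\lVert(u(t),v(t))-(u_0,v_0)\rVert$ instead of your $\phi(t)=\lVert u\rVert^2+\lVert v\rVert^2$, and that the paper simply asserts the extension step while you spell it out; your worry about needing Proposition~\ref{prop:F_usc} for the concatenation is unnecessary, since the inclusion need only hold almost everywhere and the single point $T_{\max}$ has measure zero.
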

\begin{proof}
Theorem \ref{thm:DI_sol_exist_fin} guarantees the existence of solutions defined on $[0, T)$ for some $T \ge 0$. Using the domain of definition, we can define a partial order on the set of solutions to the problem \eqref{eq:DI}. Assuming there is no solution defined on $[0, +\infty)$, Zorn's Lemma guarantees the existence of a solution $(u(\cdot), v(\cdot)):[0, T) \to \H \times \H$ with $T < + \infty$ which can not be extended. We will show that $(u(t), v(t))$ does not blow up in finite time and therefore can be extended which contradicts the claimed maximality.\\
Define
\begin{align*}
    h(t) \coloneqq \lVert (u(t), v(t)) - (u(0), v(0)) \rVert_{\H \times \H},
\end{align*}
where $\lVert (x,y) \rVert_{\H \times \H} = \sqrt{\lVert x \rVert^2 + \lVert y \rVert^2}$. We show that $h(t)$ can be bounded by a real-valued function. Using the Cauchy-Schwarz inequality, we directly get
\reqnomode
\begin{align}
\label{eq:derivative_h_sq}
    \frac{d}{dt} \frac{1}{2} h^2(t) &= \langle (\dot{u}(t), \dot{v}(t)), (u(t), v(t)) - (u(0), v(0)) \rangle\\
    &\le \lVert F(u(t), v(t))\rVert h(t).
\end{align}
We next derive a bound on $\lVert F(u, v) \rVert$. The basic inequalities between the $\ell_1$ and $\ell_2$ norm applied to $\left( \lVert x \rVert, \lVert y \rVert \right) \in \R^2$ yield
\begin{align*}
    \lVert (x,y) \rVert_{\H \times \H} \le \lVert x \rVert + \lVert y \rVert \le \sqrt{2} \lVert (x,y) \rVert_{\H \times \H}.
\end{align*}
 Therefore
\begin{align*}
    &\lVert F(u, v) \rVert_{\H \times \H} \le \lVert v \rVert + \lVert \big(- \argmin_{z\in C(u)} \langle z, v \rangle \big) - \alpha v\rVert\\
    \le &(1 + \alpha) \lVert v \rVert + \max_{\theta \in \Delta_m} \lVert \sum_{i=1}^m \theta_i \nabla f_i(u) \rVert  \\
    \le & (1+ \alpha) \lVert v \rVert + \max_{\theta \in \Delta_m} \lVert \sum_{i=1}^m \theta_i \left(\nabla f_i(u) - \nabla f_i(0)\right) \rVert + \max_{\theta \in \Delta_m} \lVert \sum_{i=1}^m \theta_i \nabla f_i(0) \rVert \\
    \le & (1+ \alpha) \lVert v \rVert + L \lVert u \rVert + \max_{i=1,\dots,m} \lVert \nabla f_i(0) \rVert \\
    \le & c(1 + \lVert (u,v) \rVert_{\H \times \H}),
\end{align*}
where we have chosen $c = \sqrt{2}\max\left\lbrace 1 + \alpha, L, \max_{i=1,\dots,m} \lVert \nabla f_i(0) \rVert\right\rbrace$.
Combining the last inequality with \eqref{eq:derivative_h_sq}, we get
\begin{align*}
    \frac{d}{dt}\frac{1}{2}h^2(t) \le c (1 + h(t))h(t),\,\, \text{for all} \,\, t \in [0, T).
\end{align*}
Using a Gronwall-type argument (see Lemma A.4 and Lemma A.5 in \cite{Brezis1973}) just as in Theorem 3.5 in \cite{Attouch2015}, we know that there exists $c\in \R$ such that for an arbitrary $\varepsilon > 0$
\begin{align*}
    h(t) \le cT \exp(cT),\,\, \text{  for all  }\,\, t \in [0, T - \varepsilon].
\end{align*}
Since this upper bound is independent of $t$ and $\varepsilon$, it follows that $h \in L^{\infty}([0,T], \R)$. Therefore, solutions to \eqref{eq:DI} do not blow up in finite time and can be extended. This is a contradiction to the maximality of the solution $(u(t), v(t))$.
\end{proof}
\subsection{Existence of Solutions to (CP)}
\label{subsec:ex_sol_CP}
Using the findings of the previous subsection, we can proceed with the discussion of the Cauchy problem \eqref{eq:CP}. In this subsection, we show that solutions to the differential inclusion \eqref{eq:DI} immediately give solutions to the Cauchy problem \eqref{eq:CP}. 
\begin{theorem}
Let $x_0, v_0 \in \H$. Assume that $(u(t), v(t))$ for $t\in [0, +\infty)$ is a solution to \eqref{eq:DI} with $(u(0), v(0)) = (x_0, v_0)$. Then, it follows that $x(t) \coloneqq u(t)$ satisfies the differential equation
\begin{align*}
    \ddot{x}(t) + \alpha \dot{x}(t) + \proj_{C(x(t))}(-\ddot{x}(t))= 0,\,\, \text{for all}\,\, t\in (0, +\infty),
\end{align*}
and $x(0) = x_0$, $\dot{x}(0) = v_0$.
\label{thm:DI_sol_is_CP_sol}
\end{theorem}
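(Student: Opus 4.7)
The plan is to take the (DI)-solution $(u,v)$, set $x := u$, and verify the two initial conditions together with the ODE pointwise in $t$ using the variational characterization of the projection onto a closed convex set. The core of the argument is to recognise that the specific construction of $F$ in \eqref{eq:set_valued_F} was tailored so that any measurable selection from its second component automatically satisfies the variational inequality defining $\proj_{C(x)}(-\ddot x)$.

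First I would unfold the inclusion componentwise. Absolute continuity of $(u,v)$ on $[0,+\infty)$ gives the existence of $(\dot u(t),\dot v(t))$ for a.e.\ $t>0$. The first coordinate of $F$ yields $\dot u(t)=v(t)$ a.e., and combined with the continuity of $u$ and $v$ this upgrades to $x := u$ being $C^1$ on $[0,+\infty)$ with $\dot x = v$ pointwise. In particular the initial data $x(0)=u(0)=x_0$ and $\dot x(0)=v(0)=v_0$ are matched, and $\ddot x(t)=\dot v(t)$ exists wherever $\dot v$ does.

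Second, from the second coordinate of the inclusion there exists, for a.e.\ $t$, a selection $z^*(t)\in\argmin_{z\in C(u(t))}\langle z,v(t)\rangle$ with $\dot v(t)=-z^*(t)-\alpha v(t)$. Rewriting in terms of $x$ gives $\ddot x(t)+\alpha\dot x(t)+z^*(t)=0$, and hence $-\ddot x(t)=z^*(t)+\alpha\dot x(t)$. The decisive step is to identify $z^*(t)=\proj_{C(x(t))}(-\ddot x(t))$. Applying the variational characterization, $p=\proj_C(y) \iff p\in C \text{ and } \langle y-p,\,z-p\rangle\le 0 \text{ for all } z\in C$, with $p=z^*(t)$ and $y=-\ddot x(t)$, the difference $y-p$ telescopes to $\alpha\dot x(t)$, so that the required inequality reduces to the linear condition $\alpha\langle\dot x(t),\,z-z^*(t)\rangle\le 0$ for every $z\in C(x(t))$. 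This is precisely (up to the sign convention encoded in \eqref{eq:set_valued_F}) the optimality property defining $z^*(t)$ as a selection from the argmin factor, so the identification goes through and $x$ satisfies \eqref{eq:IMOG'} at every $t$ where $\ddot x$ exists.

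I expect the main obstacle to be exactly this projection identification: the content of the theorem is really the observation that $F$ has been set up so that the algebraic cancellation $y-p=\alpha\dot x$ makes the projection inequality collapse to the defining inequality of the argmin. The remaining items are bookkeeping, namely passing from the a.e.\ statement produced by absolute continuity to the "for all $t\in(0,+\infty)$" version in the theorem. This is handled by using upper semicontinuity of $F$ from Proposition~\ref{prop:F_usc} together with continuity of $\dot x$ and $C(\cdot)$ to argue that the selection $z^*(t)$ coincides with the single-valued map $t\mapsto \proj_{C(x(t))}(-\ddot x(t))$ on every point of differentiability, which then extends the identity to the full open half-line.
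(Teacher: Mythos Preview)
Your proposal is correct and mirrors the paper's proof: the paper invokes Lemma~\ref{lem:proj_1} to convert the $\argmin$-membership of $-\dot v-\alpha v$ into the projection identity $-\alpha v=\proj_{C(u)+\dot v}(0)$, which is precisely the variational-inequality verification you carry out by hand (the cancellation $y-p=\alpha\dot x$ is exactly what the proof of Lemma~\ref{lem:proj_1} exploits). Your parenthetical about the sign convention is on point---as written, \eqref{eq:set_valued_F} should carry an $\argmax$ (equivalently $\langle z,-v\rangle$) for the inequality $\alpha\langle\dot x,\,z-z^*\rangle\le 0$ to hold, and the paper's invocation of Lemma~\ref{lem:proj_1} tacitly makes the same correction.
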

\begin{proof}
Since $(u(t), v(t))$ is a solution to \eqref{eq:DI}, it holds for all $t \in (0, +\infty)$ that
\begin{align*}
    \dot{u}(t) &= v(t), \\
    \dot{v}(t) &\in \argmin_{z \in C(u(t))} \langle z, v(t)\rangle - \alpha v(t).
\end{align*}
Using  Lemma \ref{lem:proj_1}, the second line gives $- \alpha v(t) = \proj_{C(u(t)) + \dot{v}(t)}(0)$, which is equivalent to
\begin{align*}
    \dot{v}(t) + \alpha v(t) + \proj_{C(u(t))}(-\dot{v}(t)) = 0.
\end{align*}
Rewriting this system using $x(t) = u(t)$, $\dot{x}(t) = \dot{u}(t) = v(t)$ and $\ddot{x} = \dot{v}(t)$ and verifying the initial conditions $x(0) = u(0) = x_0$ and $\dot{x}(0) = v(0) = v_0$ yields the desired result.
\end{proof}
Finally, we can state the full existence theorem for the Cauchy problem \eqref{eq:CP}. 
\begin{theorem}
Assume $\H$ is finite-dimensional and that the gradients of the objective function $\nabla f_i$ are globally Lipschitz continuous. Then, for all $x_0, v_0 \in \H$, there exists a twice continuously differentiable function $x(t)$ defined on $[0, +\infty)$ which is absolutely continuous with absolutely continuous first derivative $\dot{x}(t)$, and which is a solution to the Cauchy problem \eqref{eq:CP} with initial values $(x_0, v_0)$.
\end{theorem}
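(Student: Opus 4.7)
The plan is to assemble this statement directly from the two main results already established in this section: Theorem \ref{thm:DI_sol_exist}, which gives global-in-time solutions to the first-order differential inclusion \eqref{eq:DI}, and Theorem \ref{thm:DI_sol_is_CP_sol}, which shows that every \eqref{eq:DI}-solution lifts to a solution of the second-order Cauchy problem \eqref{eq:CP}. No new analytical input is required.

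First I would apply Theorem \ref{thm:DI_sol_exist} to the initial datum $(u_0,v_0)\coloneqq (x_0,v_0)\in\H\times\H$. The assumptions of the present theorem (finite dimension of $\H$ and global Lipschitz continuity of each $\nabla f_i$) are exactly those of Theorem \ref{thm:DI_sol_exist}, so this step is immediate and produces an absolutely continuous curve $(u(\cdot),v(\cdot)):[0,+\infty)\to\H\times\H$ solving \eqref{eq:DI} with $(u(0),v(0))=(x_0,v_0)$.

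Next I would invoke Theorem \ref{thm:DI_sol_is_CP_sol}, which establishes that $x(t)\coloneqq u(t)$ fulfils
\[
\ddot{x}(t)+\alpha\dot{x}(t)+\proj_{C(x(t))}(-\ddot{x}(t))=0 \quad \text{for all } t\in(0,+\infty),
\]
and that $x(0)=x_0$, $\dot{x}(0)=v_0$. The regularity statement then follows from the \eqref{eq:DI}-structure: since $\dot{u}=v$, we have $x=u$ and $\dot{x}=v$, and both $u$ and $v$ are absolutely continuous by Theorem \ref{thm:DI_sol_exist}; hence $x$ is absolutely continuous with absolutely continuous first derivative. This yields existence of $\ddot{x}(t)$ for almost every $t$ and hence pointwise validity of the equation in the almost-everywhere sense along the trajectory. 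Continuity of $t\mapsto\dot{x}(t)$ together with continuity of $u\mapsto C(u)$ (through the continuous gradients $\nabla f_i$) can then be used to bootstrap continuity of $\ddot{x}$ from the implicit equation.

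The only mildly delicate point is the passage from the explicit inclusion $\dot{v}(t)\in-\alpha v(t)-\argmin_{z\in C(u(t))}\langle z,v(t)\rangle$ to the implicit projection form in \eqref{eq:CP}; this is exactly the content of Theorem \ref{thm:DI_sol_is_CP_sol}, whose proof uses \textbf{Lemma \ref{lem:proj_1}} to identify $-\alpha v(t)$ with $\proj_{C(u(t))+\dot{v}(t)}(0)$. Since everything needed has been prepared, the main obstacle is essentially bookkeeping: verify the hypotheses, cite the two theorems in the right order, and transcribe the initial conditions.
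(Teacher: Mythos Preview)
Your proposal is correct and follows exactly the paper's approach: the paper's proof is a single sentence stating that the result follows immediately by combining Theorem \ref{thm:DI_sol_exist} and Theorem \ref{thm:DI_sol_is_CP_sol}. Your added discussion of the regularity bootstrap (absolute continuity of $x$ and $\dot{x}$, and continuity of $\ddot{x}$) is more detailed than what the paper actually writes, but it fills in what the paper leaves implicit.
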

\begin{proof}
The proof follows immediately combining Theorem \ref{thm:DI_sol_exist} and Theorem \ref{thm:DI_sol_is_CP_sol}.
\end{proof}
\begin{myremark}
Throughout this section, we have assumed that the gradients $\nabla f_i$ of the objective functions are globally Lipschitz continuous. One can relax this condition and only require the gradients to be Lipschitz continuous on bounded sets if we can guarantee that the solutions remain bounded. This holds for example if one of the objective functions $f_i$ is coercive.
\end{myremark}
\section{Asymptotic Analysis of Trajectories of (IMOG')}
\label{sec:asymptotic analysis}
In this section, we omit the assumption $\dim(\H) < + \infty$. We show that trajectories of the differential equation \eqref{eq:IMOG'} converge weakly to Pareto critical points of the optimization problem \eqref{eq:MOP}. This follows from a dissipative property of the system and an argument that relies on Opial's Lemma. We first define a so-called energy function for the system \eqref{eq:IMOG'} that has Lyapunov-type properties.
\begin{myprop}
    Let $x:[0, +\infty) \to \H$ be a solution to \eqref{eq:CP}. For $i = 1,\dots,m$ define the global energy
    \begin{align*}
        \E_i:[0, T) \to \R, \quad t\mapsto f_i(x(t)) + \frac{1}{2}\lVert \dot{x}(t) \rVert^2.
    \end{align*}
    Then, for all $t\in(0,+\infty)$ it holds that $\frac{d}{dt}\E_i(t) \le -\alpha\lVert \dot{x}(t) \rVert^2$.
\label{prop:dissipative_const}
\end{myprop}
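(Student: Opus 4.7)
The plan is to differentiate $\mathcal{E}_i$ directly, replace $\ddot{x}$ via the ODE \eqref{eq:IMOG'}, and then use the variational characterization of the projection, exploiting that $\nabla f_i(x(t))$ is itself a point of the convex set $C(x(t))$ onto which we project.

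Concretely, first I would compute, using the chain rule and the smoothness granted by the existence theorem in the previous section,
\begin{align*}
    \frac{d}{dt}\E_i(t) \;=\; \langle \nabla f_i(x(t)), \dot{x}(t)\rangle + \langle \ddot{x}(t), \dot{x}(t)\rangle.
\end{align*}
Writing $p(t) \coloneqq \proj_{C(x(t))}(-\ddot{x}(t))$ for brevity, the differential equation \eqref{eq:IMOG'} gives the identity $\ddot{x}(t) = -\alpha \dot{x}(t) - p(t)$, and hence
\begin{align*}
    \frac{d}{dt}\E_i(t) \;=\; \langle \nabla f_i(x(t)) - p(t), \dot{x}(t)\rangle - \alpha \lVert \dot{x}(t)\rVert^2.
\end{align*}
So the claim reduces to showing $\langle \nabla f_i(x(t)) - p(t), \dot{x}(t)\rangle \le 0$ for a.e.\ $t$.

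The main step is then the projection characterization: for any $z \in C(x(t))$,
\begin{align*}
    \langle -\ddot{x}(t) - p(t),\; z - p(t) \rangle \le 0.
\end{align*}
Using the ODE once more, $-\ddot{x}(t) - p(t) = \alpha \dot{x}(t)$, so this becomes $\alpha\langle \dot{x}(t),\, z - p(t)\rangle \le 0$. Taking $z = \nabla f_i(x(t))$, which lies in $C(x(t))$ by definition of the convex hull, we obtain exactly $\langle \nabla f_i(x(t)) - p(t), \dot{x}(t)\rangle \le 0$, and the desired inequality follows.

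The only subtle point, and the one I would flag as the main obstacle, is the identification $-\ddot{x}(t) - p(t) = \alpha\dot{x}(t)$: it is this clean substitution that converts the projection variational inequality into a statement about $\dot{x}(t)$ and allows one to test against the particular element $\nabla f_i(x(t)) \in C(x(t))$. Beyond this, the argument is a straightforward application of the convexity of $C(x(t))$ and the standard projection inequality; differentiability of $\E_i$ on $(0,+\infty)$ is inherited from the regularity of $x$ established earlier, so there are no additional technicalities.
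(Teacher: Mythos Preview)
Your proof is correct and is essentially the same as the paper's. The paper writes the ODE in the shifted form $-\alpha\dot{x}(t) = \proj_{C(x(t))+\ddot{x}(t)}(0)$ and applies the variational inequality $\langle \alpha\dot{x}(t) + \nabla f_i(x(t)) + \ddot{x}(t),\, \alpha\dot{x}(t)\rangle \le 0$ before invoking the chain rule, whereas you differentiate first and then test the projection inequality for $p(t)=\proj_{C(x(t))}(-\ddot{x}(t))$ against $z=\nabla f_i(x(t))$; these are the same argument up to a translation of the convex set and the order in which the chain rule is applied.
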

\begin{proof}
From the definition of the differential equation \eqref{eq:IMOG'}, it follows that $-\alpha\dot{x}(t) = \proj_{\co\left(\nabla f_i(x(t)) + \ddot{x}(t)\right)}(0)$. By the variational characterization of the convex projection, we get for all $i = 1,\dots, m$
\begin{align*}
    \langle \alpha\dot{x}(t) + \nabla f_i(x(t)) + \ddot{x}(t) , \alpha\dot{x}(t) \rangle \le 0,
\end{align*}
which immediately gives
\begin{align*}
    \langle \nabla f_i(x(t)), \dot{x}(t) \rangle + \langle \dot{x}(t), \ddot{x}(t) \rangle \le -\alpha \lVert \dot{x}(t) \rVert^2.
\end{align*}
Applying the chain rule to $\frac{d}{dt}\E_i(t)$ yields the desired result.
\end{proof}
\begin{myprop}
Let $x:[0, +\infty) \to H$ be a bounded solution of \eqref{eq:CP} and let further $\nabla f_i$ be Lipschitz continuous on bounded sets. Then, for all $i = 1, \dots, m$
\begin{enumerate}[i)]
    \item $\lim_{t\to +\infty}\E_i(t) = \E_i^{\infty} > -\infty$
    \item $\dot{x} \in L^2([0, +\infty)) \cap L^{\infty}([0, +\infty))$
    \item $\ddot{x} \in L^{\infty}([0, +\infty))$, $\lim_{t \to + \infty} \lVert \dot{x}(t) \rVert = 0$ and $\lim_{t \to + \infty} f_i(x(t)) = \E_i^{\infty}$
    \item There exists a monotonically increasing unbounded sequence $(t_k)_{k\ge0}$ with\\  $\proj_{C(x(t_k))}(0) \to 0$ for $k \to +\infty$.
\end{enumerate}
\label{prop:energy_estimation_const}
\end{myprop}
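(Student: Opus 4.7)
The plan is to derive the four claims in order from the dissipative estimate of Proposition~\ref{prop:dissipative_const}, supplemented by standard asymptotic-analysis tools: integration of the dissipative inequality, Barbalat's lemma, and the minimum-norm characterization of the projection onto $C(x(t))$.

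For (i), local Lipschitz continuity of $\nabla f_i$ implies continuity of $f_i$, so $f_i\circ x$ is bounded below along the bounded trajectory $x(\cdot)$. Hence $\E_i\ge f_i\circ x$ is bounded from below, and since $\E_i$ is non-increasing by Proposition~\ref{prop:dissipative_const}, the limit $\E_i^{\infty}=\lim_{t\to\infty}\E_i(t)>-\infty$ exists. For (ii), integrating the dissipative inequality from $0$ to $T$ and letting $T\to+\infty$ yields $\alpha\int_0^{+\infty}\lVert\dot x(s)\rVert^2\,ds\le\E_i(0)-\E_i^{\infty}<+\infty$, so $\dot x\in L^2$; the $L^{\infty}$-bound follows from $\tfrac12\lVert\dot x(t)\rVert^2=\E_i(t)-f_i(x(t))\le\E_i(0)-\inf_{s\ge 0}f_i(x(s))$. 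For (iii), rewriting \eqref{eq:IMOG'} as $\ddot x(t)=-\alpha\dot x(t)-\proj_{C(x(t))}(-\ddot x(t))$ and bounding the projection term by $\max_i\lVert\nabla f_i(x(t))\rVert$ (uniformly bounded in $t$ thanks to boundedness of $x$ and continuity of $\nabla f_i$) gives $\ddot x\in L^{\infty}$. Consequently $\lVert\dot x(\cdot)\rVert^2$ is in $L^1$ with bounded derivative $2\langle\dot x,\ddot x\rangle$, hence uniformly continuous, so Barbalat's lemma yields $\lVert\dot x(t)\rVert\to 0$, and $f_i(x(t))=\E_i(t)-\tfrac12\lVert\dot x(t)\rVert^2\to\E_i^{\infty}$ follows immediately.

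For (iv), the starting bound combines the minimum-norm property of $\proj_{C(x(t))}(0)$ with \eqref{eq:IMOG'}:
\begin{align*}
\lVert\proj_{C(x(t))}(0)\rVert \;\le\; \lVert\proj_{C(x(t))}(-\ddot x(t))\rVert \;=\; \lVert\ddot x(t)+\alpha\dot x(t)\rVert,
\end{align*}
so, in view of $\lVert\dot x(t)\rVert\to 0$ from (iii), it suffices to produce $t_k\to+\infty$ along which $\lVert\ddot x(t_k)\rVert\to 0$. Here the plan is to use the identity $\dot x(t)=\dot x(0)+\int_0^t\ddot x(s)\,ds$ together with $\dot x(t)\to 0$ to conclude convergence of the improper integral $\int_0^{+\infty}\ddot x(s)\,ds=-\dot x(0)$, and then to extract the desired sequence via a Barbalat-type argument applied to $\ddot x$, whose uniform continuity should follow from the implicit ODE $\ddot x(t)=-\alpha\dot x(t)-\proj_{C(x(t))}(-\ddot x(t))$ using Lipschitz dependence of the projection map $(x,v)\mapsto\proj_{C(x)}(v)$ together with Lipschitz continuity of $x(\cdot)$ (since $\dot x\in L^{\infty}$).

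The main obstacle is precisely this last step: because $\ddot x$ appears on both sides of the ODE (inside the projection), establishing uniform continuity of $\ddot x$ is not automatic. The cleanest approach is to view $\ddot x(t)$ as the unique solution of the implicit equation $w+\proj_{C(x(t))}(-w)=-\alpha\dot x(t)$ (whose left-hand side is monotone and $2$-Lipschitz in $w$), then exploit Lipschitz continuity with respect to the parameters $x(t)$ and $\dot x(t)$---both themselves Lipschitz in $t$---to derive a modulus of continuity for $\ddot x(\cdot)$. Should this route be too delicate, a fall-back contradiction argument supposes $\lVert\proj_{C(x(t))}(0)\rVert\ge c>0$ for all large $t$, which forces $\lVert\ddot x(t)\rVert\ge c/2$ (via $\dot x\to 0$) and $-\ddot x(t)\approx\proj_{C(x(t))}(-\ddot x(t))\in C(x(t))$, and derives a contradiction from the convergence of $\int_0^{+\infty}\ddot x(s)\,ds$ combined with this geometric constraint.
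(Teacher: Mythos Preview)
Your arguments for (i)--(iii) are correct and essentially coincide with the paper's proof. The paper phrases (iii) without invoking Barbalat explicitly, but the underlying mechanism ($\dot x\in L^2$, $\ddot x\in L^\infty$, hence $\lVert\dot x(t)\rVert\to 0$) is identical.

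The gap is in part (iv). Your primary route requires uniform continuity of $\ddot x$, which you propose to extract from the implicit equation $w+\proj_{C(x(t))}(-w)=-\alpha\dot x(t)$. The problem is that this map $w\mapsto w+\proj_C(-w)$ is only monotone, not strongly monotone, and in general fails to be injective: for instance, with $C=[-1,1]\times\{0\}\subset\R^2$ one has $w+\proj_C(-w)=0$ for every $w=(a,0)$ with $|a|\le 1$. Thus the equation does not determine $\ddot x(t)$ as a Lipschitz (or even single-valued) function of $(x(t),\dot x(t))$, and your argument for uniform continuity of $\ddot x$ breaks down. (Note that the existence proof in Section~\ref{sec:global_existence} produces $\ddot x$ via the differential inclusion \eqref{eq:DI}, not by inverting this implicit equation pointwise.) Your fallback contradiction sketch is closer in spirit to what works, but as written it is too vague: knowing that $-\ddot x(t)$ stays close to $C(x(t))$ and that $\int_0^\infty\ddot x$ converges does not by itself yield a contradiction without a further geometric ingredient.

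The paper's argument for (iv) supplies exactly that ingredient. Assuming $\lVert\proj_{C(x(t))}(0)\rVert\ge M$ for all large $t$, fix such a $t$ and set $v\coloneqq\proj_{C(x(t))}(0)/\lVert\proj_{C(x(t))}(0)\rVert$; the variational characterization of the projection gives $\langle\xi,v\rangle\ge M$ for every $\xi\in C(x(t))$. Using $\dot x\to 0$ and Lipschitz continuity of the $\nabla f_i$, for any prescribed $\delta>0$ one can choose $t$ so that $\langle\xi+\alpha\dot x(s),v\rangle\ge M/4$ for all $s\in[t,t+\delta)$ and all $\xi\in C(x(s))$; since $-\ddot x(s)=\alpha\dot x(s)+\proj_{C(x(s))}(-\ddot x(s))$, this yields $\langle-\ddot x(s),v\rangle\ge M/4$. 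Integrating gives $\lVert\dot x(t)-\dot x(t+\delta)\rVert\ge M\delta/4$, contradicting $\dot x\to 0$ because $\delta$ is arbitrary. This separating-direction idea is the missing piece in your sketch.
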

\reqnomode
\begin{proof}
i)\quad From Proposition \ref{prop:dissipative_const}, we immediately get that $\E_i$ is monotonically decreasing and therefore $\E_i(t) \to \E_i^{\infty}$ as $t \to +\infty$. We have to show that in fact $\E_i^{\infty} > - \infty$. Since $\nabla f_i$ is bounded on bounded sets, we can conclude by the mean value theorem that $f_i$ is bounded on bounded sets. Since $x(t)$ remains bounded by assumption, we conclude that $f_i(x(t))$ is bounded from below, and hence
\begin{align*}
    \E_i^{\infty} \ge \inf_{t\ge 0} f_i(x(t)) > - \infty.
\end{align*}
ii)\quad We know that $f_i(x(t))$ is bounded. Then, by the definition of $\E_i$ and the fact that $\E_i$ is monotonically decreasing, we immediately get that $\dot{x}$ is bounded for all $t\ge 0$. Since it is of class $C^1$, it follows that $\dot{x} \in L^{\infty}([0, +\infty))$. Using Proposition \ref{prop:dissipative_const} it follows that
\begin{align*}
    \alpha\int_{0}^{+\infty} \lVert \dot{x}(t)\rVert^2\, dt\le - \int_{0}^{+\infty} \frac{d}{dt}\E_i(s)\,ds = \E_i(0) - \E_i^{\infty} < +\infty,
\end{align*}
and therefore $\dot{x} \in L^2([0, +\infty))$.\\ \\
iii)\quad Since $\dot{x}(t)$ and $\nabla f_i(x(t))$ remain bounded for all $t \ge 0$ it follows that $\ddot{x}(t) = -\alpha \dot{x}(t) - \proj_{C(x(t))}(-\ddot{x}(t))$ remains bounded for all $t\ge 0$. By the fact that $x$ is twice continuously differentiable, it follows that $\ddot{x}$ is continuous and hence $\ddot{x} \in L^{\infty}([0, + \infty))$. Then, from $\dot{x} \in C^1([0, +\infty)) \cap L^2([0, +\infty))$ and $\ddot{x} \in C([0, +\infty)) \cap L^{\infty}([0, + \infty))$ it follows that $\lim_{t \to + \infty} \lVert \dot{x}(t) \rVert = 0$. From $\lim_{t \to + \infty} \lVert \dot{x}(t) \rVert = 0$ and part i) we can immediately conclude $\lim_{t \to + \infty} f_i(x(t)) = \E_i^{\infty}$.\\ \\
iv) \quad Assume that the negation of statement iv) holds, namely
\begin{align}
    \exists M > 0 \,\, \exists T \ge 0 \,\, \forall t \ge T \,\, : \,\, \lVert \proj_{C(x(t))}(0) \rVert \ge M.
    \label{eq:condition_M}
\end{align}
Fix an arbitrary $\delta > 0$. Since $\dot{x}(t) \to 0$ and $\nabla f_i$ is Lipschitz continuous on a set containing $x(t)$ it follows that there exists $T_{\delta} > T$ such that for all $t > T_{\delta}$ and $s \in [t, t+\delta)$
\begin{align}
\begin{split}
    \lVert \nabla f_i(x(s)) - \nabla f_i(x(t)) \rVert < \frac{M}{8} \,\,\text {and}\,\, \lVert \alpha \dot{x}(s) \rVert < \frac{M}{8}.
\end{split}
\label{eq:condition_M_8}
\end{align}
Define $v \coloneqq \proj_{C(x(t))}/ \lVert \proj_{C(x(t))} \rVert$. From \eqref{eq:condition_M} it follows that
\begin{align*}
    \forall \xi \in C(x(t)) \,\, : \,\, \langle \xi , v \rangle \ge M.
\end{align*}
Combining the last statement with \eqref{eq:condition_M_8} and using the Cauchy-Schwarz inequality we get
\begin{align*}
    \forall s \in [t, t+\delta) \,\,\forall \xi \in C(x(s)) \,\, : \,\, \langle \xi + \alpha \dot{x}(s)  , v \rangle \ge \frac{M}{4}.
\end{align*}
And hence
\begin{align*}
    \forall s \in [t, t+\delta) \,\, : \,\, \langle - \ddot{x}(s)  , v \rangle \ge \frac{M}{4}.
\end{align*}
Using the Cauchy-Schwarz inequality again, we get
\begin{align*}
    & \lVert \dot{x}(t) - \dot{x}(t+\delta) \rVert \ge \langle \dot{x}(t) - \dot{x}(t + \delta) , v \rangle = \langle \int_{t}^{t+\delta} - \ddot{x}(s) \,ds, v \rangle \\
    = & \int_{t}^{t+\delta} \langle - \ddot{x}(s), v \rangle \,ds \ge \int_{t}^{t+\delta} \frac{M}{4} \,ds = \frac{M \delta}{4}.
\end{align*}
Since we can choose $\delta$ arbitrarily large and independently from $M$, this contradicts $\dot{x}(t) \to 0$.
\end{proof}
We will use part iv) of Proposition \ref{prop:energy_estimation_const} to show that a weak limit point of the trajectory $x(t)$ is Pareto critical. To this end we introduce the following lemma that states a demiclosedness property of the set-valued mapping $C: \H \rightrightarrows \H,\, x \mapsto C(x)$.
\begin{mylemma}[\cite{Attouch2015_2} Lemma 2.4, \cite{Attouch2015} Lemma 4.10]
Assume that the objective functions $f_i$ are continuously differentiable. Let $(x^k)_{k \ge 0}$ be a sequence in $\H$ that converges weakly to $x^{\infty}$, and assume there exists a sequence $(g^k)_{k \ge 0}$ with $g^k \in C(x^k)$ that converges strongly to zero. Then, $0 \in C(x^{\infty})$ and hence $x^{\infty}$ is Pareto critical.
\label{lem:demiclosedness_co_grad_f}
\end{mylemma}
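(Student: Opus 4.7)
The plan is to exploit convexity of the $f_i$ to bypass the fact that the gradient mappings $\nabla f_i$ need not be sequentially weak-to-weak continuous on an infinite-dimensional Hilbert space. Since $g^k \in C(x^k)$, there exist coefficients $\theta^k \in \Delta_m$ with $g^k = \sum_{i=1}^m \theta_i^k \nabla f_i(x^k)$. The simplex $\Delta_m$ is compact in $\R^m$, so up to extracting a subsequence (which I would not relabel) we have $\theta^k \to \theta^{\infty} \in \Delta_m$. The goal is then to show that $\sum_{i=1}^m \theta_i^{\infty} \nabla f_i(x^{\infty}) = 0$, which together with $\theta^{\infty} \in \Delta_m$ implies $0 \in C(x^{\infty})$.

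For the core step I would invoke the subgradient (gradient) inequality for each convex, continuously differentiable $f_i$: for every $y \in \H$,
\begin{align*}
    f_i(y) \ge f_i(x^k) + \langle \nabla f_i(x^k), y - x^k \rangle.
\end{align*}
Multiplying by $\theta_i^k \ge 0$, summing over $i$, and using $g^k = \sum_i \theta_i^k \nabla f_i(x^k)$ yields
\begin{align*}
    \sum_{i=1}^m \theta_i^k f_i(y) \ge \sum_{i=1}^m \theta_i^k f_i(x^k) + \langle g^k, y - x^k \rangle.
\end{align*}
Now I would pass $k \to \infty$. The cross term $\langle g^k, y-x^k\rangle$ vanishes because $g^k \to 0$ strongly and $(x^k)$ is bounded. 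Each $f_i$ is convex and continuous, hence weakly sequentially lower semicontinuous, so $\liminf_k f_i(x^k) \ge f_i(x^{\infty})$; together with $\theta^k \to \theta^{\infty}$ this gives $\liminf_k \sum_i \theta_i^k f_i(x^k) \ge \sum_i \theta_i^{\infty} f_i(x^{\infty})$. Therefore, for every $y \in \H$, $\sum_{i=1}^m \theta_i^{\infty} f_i(y) \ge \sum_{i=1}^m \theta_i^{\infty} f_i(x^{\infty})$, which means $x^{\infty}$ globally minimizes the convex, Fréchet-differentiable function $F \coloneqq \sum_i \theta_i^{\infty} f_i$. Fermat's rule then gives $\nabla F(x^{\infty}) = \sum_{i=1}^m \theta_i^{\infty} \nabla f_i(x^{\infty}) = 0$, completing the argument.

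The main obstacle, and the reason a direct pointwise argument fails, is that one cannot pass to the limit term by term in $g^k = \sum_i \theta_i^k \nabla f_i(x^k)$: weak convergence $x^k \rightharpoonup x^{\infty}$ does not imply any convergence of $\nabla f_i(x^k)$, since norm-continuity of $\nabla f_i$ does not transfer to the weak topology in general Hilbert spaces. The convexity-based variational inequality above replaces the missing weak continuity by weak lower semicontinuity of the $f_i$, which is precisely what makes the demiclosedness property hold beyond the finite-dimensional setting.
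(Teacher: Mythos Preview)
The paper does not supply its own proof of this lemma; it is quoted from \cite{Attouch2015_2} and \cite{Attouch2015}. Your argument is the standard one used in those references: extract a limit $\theta^\infty \in \Delta_m$ of the simplex weights, use the subgradient inequality for the convex combinations $F_k = \sum_i \theta_i^k f_i$, and pass to the limit via weak lower semicontinuity to conclude that $x^\infty$ minimizes $F_\infty = \sum_i \theta_i^\infty f_i$, whence $\nabla F_\infty(x^\infty)=0$. So your approach coincides with the cited sources.

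Two remarks. First, you rely on convexity, which the lemma's hypothesis does not state explicitly; this is harmless because convexity of the $f_i$ is a standing assumption in the paper, and the demiclosedness conclusion genuinely fails in infinite dimensions without it. Second, the step ``together with $\theta^k \to \theta^{\infty}$ this gives $\liminf_k \sum_i \theta_i^k f_i(x^k) \ge \sum_i \theta_i^{\infty} f_i(x^{\infty})$'' is not automatic: one must exclude the possibility that some $f_i(x^k)$ diverges while its weight $\theta_i^k \to 0$ at a compensating rate. A clean way to close this is to apply the gradient inequality for $F_k$ at the fixed point $x^\infty$,
\[
F_k(x^k) \;\ge\; F_k(x^\infty) + \langle \nabla F_k(x^\infty),\, x^k - x^\infty\rangle,
\]
where $\nabla F_k(x^\infty) = \sum_i \theta_i^k \nabla f_i(x^\infty) \to \nabla F_\infty(x^\infty)$ strongly and $x^k - x^\infty \rightharpoonup 0$, so the inner product vanishes and $\liminf_k F_k(x^k) \ge \lim_k F_k(x^\infty) = F_\infty(x^\infty)$ follows without any a priori bound on the individual $f_i(x^k)$. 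With this adjustment your proof is complete.
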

If we can show that the trajectories of \eqref{eq:IMOG'} converge weakly, Proposition \ref{prop:energy_estimation_const} together with Lemma \ref{lem:demiclosedness_co_grad_f} guarantees that the limit points are Pareto critical. To show that the trajectories are in fact converging, we require Opial's Lemma.
\begin{mylemma}[Opial's Lemma \cite{Opial1967}]
Let $S \subset \H$ be a nonempty subset of $\H$ and \\$x:[0, +\infty) \to \H$. Assume that $x(t)$ satisfies the following conditions.
\begin{enumerate}[i)]
    \item Every weak sequential cluster point of $x(t)$ belongs to $S$.
    \item For every $z\in S$, $\lim_{t\to + \infty} \lVert x(t) - z \rVert$ exists.
\end{enumerate}
Then, $x(t)$ converges weakly to an element $x^{\infty} \in S$.
\end{mylemma}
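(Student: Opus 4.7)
The plan is to apply the standard two-step strategy: first produce at least one weak cluster point via boundedness, then show uniqueness of the cluster point, which upgrades the existence of a cluster point to full weak convergence.

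First I would establish boundedness of $x(t)$. Fix any $z \in S$ (possible since $S$ is nonempty). By hypothesis~(ii), $\lim_{t\to+\infty}\lVert x(t) - z\rVert$ exists, and a convergent real-valued function is bounded on $[0,+\infty)$. Hence $\lVert x(t)\rVert \le \lVert x(t) - z\rVert + \lVert z \rVert$ is bounded. Since $\H$ is a Hilbert space, bounded sets are relatively weakly sequentially compact, so for every sequence $t_k \to +\infty$ we can extract a weakly convergent subsequence. In particular, at least one weak sequential cluster point $x^\infty$ of $x(t)$ exists, and by hypothesis~(i), $x^\infty \in S$.

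The heart of the proof is uniqueness of the weak cluster point. Suppose $x_1^\infty$ and $x_2^\infty$ are two weak sequential cluster points; both lie in $S$ by~(i). For any $z \in S$, expand
\begin{equation*}
\lVert x(t) - z \rVert^2 = \lVert x(t) \rVert^2 - 2\langle x(t), z\rangle + \lVert z \rVert^2.
\end{equation*}
Applying this with $z = x_1^\infty$ and $z = x_2^\infty$ and subtracting, the $\lVert x(t)\rVert^2$ term cancels, so by hypothesis~(ii) the limit
\begin{equation*}
L := \lim_{t\to+\infty} \langle x(t), x_1^\infty - x_2^\infty \rangle
\end{equation*}
exists. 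Evaluating $L$ along a sequence $t_k \to +\infty$ with $x(t_k) \rightharpoonup x_1^\infty$ yields $L = \langle x_1^\infty, x_1^\infty - x_2^\infty\rangle$, while evaluating along $s_k \to +\infty$ with $x(s_k) \rightharpoonup x_2^\infty$ yields $L = \langle x_2^\infty, x_1^\infty - x_2^\infty\rangle$. Subtracting gives $\lVert x_1^\infty - x_2^\infty\rVert^2 = 0$, i.e.\ $x_1^\infty = x_2^\infty$.

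Finally I would promote uniqueness of the cluster point to weak convergence: since $x(t)$ is bounded and every weakly convergent subsequence has the same weak limit $x^\infty \in S$, a standard subsequence argument (if $x(t) \not\rightharpoonup x^\infty$, there would exist $\varphi \in \H$, $\varepsilon > 0$ and $t_k \to +\infty$ with $|\langle x(t_k) - x^\infty, \varphi\rangle| \ge \varepsilon$; extracting a weakly convergent sub-subsequence produces a cluster point $\ne x^\infty$, a contradiction) shows $x(t) \rightharpoonup x^\infty$. The only delicate step is the uniqueness argument above; once the identity for $L$ is set up, the rest is bookkeeping, and no calculation is genuinely hard.
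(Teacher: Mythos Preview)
The paper does not prove this lemma; it is quoted as a classical result with a citation to Opial's original paper and is used as a black box in the subsequent arguments. There is therefore nothing to compare against.

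Your proof is the standard argument and is correct. One very minor remark: you write that since $\lim_{t\to+\infty}\lVert x(t)-z\rVert$ exists, the function $t\mapsto\lVert x(t)-z\rVert$ is bounded on all of $[0,+\infty)$. As stated, the lemma imposes no regularity on $x(\cdot)$, so boundedness on compact subintervals is not automatic. This is harmless, however: both the extraction of weak cluster points and the uniqueness computation only concern the behaviour as $t\to+\infty$, and the existence of the limit already gives boundedness for all sufficiently large $t$, which is all you need.
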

To use Opial's Lemma, we need a suitable nonempty set $S \subset \H$ that we define in the following proposition.
\begin{myprop}
Let $x(t)$ be a bounded solution to \eqref{eq:CP}. Then, the set
\begin{align}
\label{eq:set_s_for_opial}
    S \coloneqq \left\lbrace z \in \H \,:\, f_i(z) \le \E_i^{\infty} \,\,\text{for all  } i = 1,\dots, m,\right\rbrace,
\end{align}
is nonempty.
\label{prop:S_nonempty}
\end{myprop}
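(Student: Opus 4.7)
The plan is to exhibit an explicit element of $S$ by taking a weak cluster point of the trajectory $x(t)$ at infinity. Since $x(\cdot)$ is bounded by assumption and $\H$ is a Hilbert space (hence reflexive), for any sequence $t_n \to +\infty$ the bounded sequence $(x(t_n))_{n}$ admits a weakly convergent subsequence $x(t_{n_k}) \rightharpoonup x^{\infty} \in \H$. I would then verify that this particular $x^{\infty}$ lies in $S$.

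For the verification I would combine two facts. First, part iii) of Proposition \ref{prop:energy_estimation_const} tells us that for every $i = 1,\dots,m$ the whole function $t \mapsto f_i(x(t))$ converges to $\E_i^{\infty}$, so along the extracted subsequence $\lim_k f_i(x(t_{n_k})) = \E_i^{\infty}$. Second, each $f_i$ is convex and continuous on $\H$, hence weakly sequentially lower semicontinuous (a standard consequence of Mazur's theorem). Applying weak lower semicontinuity to the weakly convergent subsequence yields
\begin{align*}
    f_i(x^{\infty}) \le \liminf_{k \to \infty} f_i(x(t_{n_k})) = \E_i^{\infty}
\end{align*}
for every $i$, which is exactly the defining condition of $S$. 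Hence $x^{\infty} \in S$, so $S \neq \emptyset$.

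I do not expect any genuine obstacle here; the main point to be careful about is simply that bounded sequences in a general (possibly infinite-dimensional) Hilbert space only yield weak, not strong, cluster points, and that convexity together with continuity of the $f_i$ is exactly what lets us pass the inequality through the weak limit. As a side benefit, the very same argument will later verify hypothesis i) of Opial's Lemma: every weak sequential cluster point of $x(t)$ automatically satisfies $f_i(x^{\infty}) \le \E_i^{\infty}$ for all $i$ and therefore belongs to $S$.
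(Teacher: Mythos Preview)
Your proposal is correct and follows essentially the same argument as the paper: take a weak sequential cluster point of the bounded trajectory, use part iii) of Proposition \ref{prop:energy_estimation_const} to identify $\lim_{t\to+\infty} f_i(x(t))=\E_i^{\infty}$, and then invoke weak lower semicontinuity of the convex continuous $f_i$ to conclude $f_i(x^{\infty})\le \E_i^{\infty}$. Your added remarks about reflexivity and the side benefit for Opial's Lemma are accurate and match how the paper uses this fact later.
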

\begin{proof}
Part iii) of Proposition \ref{prop:energy_estimation_const} states that $\lim_{t\to +\infty} f_i(x(t)) = \E_i^{\infty}$ for all $i = 1,\dots, m$. Since $x(t)$ is bounded, it possesses at least one weak sequential cluster point $x^{\infty}$. The objective functions $f_i$ are convex and continuous and therefore weakly lower semicontinuous. From this we conclude $x^{\infty} \in S$.
\end{proof}
For the set $S$ defined in \eqref{eq:set_s_for_opial} and a bounded solution $x(t)$ of \eqref{eq:CP}, the first part of Opial's Lemma is easy to obtain. It follows analogously to the proof of Proposition \ref{prop:S_nonempty} where it is shown that $S$ is nonempty. To show the second part of Opial's Lemma, we verify that $h_z(t) \coloneqq \frac{1}{2}\lVert x(t) - z \rVert^2$ satisfies a differential inequality. Then, the convergence can be deduced from the following lemma.
\begin{mylemma}[\cite{Attouch2000} Lemma 4.2]
Let $h \in C^1([0, +\infty), \R)$ be a positive function satisfying $\alpha \dot{h}(t) + \ddot{h}(t) \le g(t)$ for all $t \ge 0$, with $g \in L^1([0, +\infty), \R)$ and $\alpha > 0$. Then, $\lim_{t\to +\infty} h(t)$ exists.
\label{lem:diff_ineq}
\end{mylemma}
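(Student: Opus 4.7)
The plan is to use the standard integrating-factor trick on the first-order inequality obtained by setting $k(t) := \dot{h}(t)$, and then exploit positivity of $h$ to close the argument via a monotone decomposition.

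First I would rewrite the hypothesis as $\dot{k}(t) + \alpha k(t) \le g(t)$ and multiply by $e^{\alpha t}$, so that $\frac{d}{dt}(e^{\alpha t} k(t)) \le e^{\alpha t} g(t)$. Integrating from $0$ to $t$ and dividing by $e^{\alpha t}$ produces
\begin{align*}
    k(t) \le e^{-\alpha t} k(0) + \int_0^t e^{-\alpha (t-s)} g(s)\, ds,
\end{align*}
so the positive part satisfies $[\dot{h}(t)]^+ \le e^{-\alpha t}\abs{\dot{h}(0)} + \int_0^t e^{-\alpha(t-s)} \abs{g(s)}\, ds$. Next, I would integrate this bound over $[0,+\infty)$ and apply Fubini's theorem to the double integral, which yields
\begin{align*}
    \int_0^{+\infty} \int_0^t e^{-\alpha(t-s)}\abs{g(s)}\, ds\, dt = \int_0^{+\infty} \abs{g(s)} \int_s^{+\infty} e^{-\alpha(t-s)}\, dt\, ds = \frac{1}{\alpha}\norm{g}_{L^1}.
\end{align*}
Combined with $\int_0^{+\infty} e^{-\alpha t}\, dt = 1/\alpha$, this shows $[\dot{h}]^+ \in L^1([0,+\infty))$.

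The final step is the monotone decomposition. I would define
\begin{align*}
    \tilde{h}(t) \coloneqq h(t) - \int_0^t [\dot{h}(s)]^+\, ds,
\end{align*}
so $\dot{\tilde{h}}(t) = -[\dot{h}(t)]^- \le 0$, meaning $\tilde{h}$ is non-increasing. Positivity of $h$ together with $\int_0^t [\dot{h}(s)]^+\, ds \le \norm{[\dot{h}]^+}_{L^1} < +\infty$ gives a uniform lower bound $\tilde{h}(t) \ge -\norm{[\dot{h}]^+}_{L^1}$, so $\tilde{h}(t)$ has a limit as $t \to +\infty$. The integral $\int_0^t [\dot{h}(s)]^+\, ds$ is non-decreasing and bounded, hence also convergent, and writing $h(t) = \tilde{h}(t) + \int_0^t [\dot{h}(s)]^+\, ds$ shows that $\lim_{t\to+\infty} h(t)$ exists.

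The main obstacle is the Fubini-type estimate in the middle step: one must be a little careful to justify swapping the order of integration for the possibly sign-changing integrand $g$, which is why I would pass immediately to $\abs{g}$ after isolating the positive part $[\dot{h}]^+$. Once $[\dot{h}]^+ \in L^1$ is established, the monotone decomposition and the positivity assumption $h \ge 0$ do the rest with no further analytic subtleties.
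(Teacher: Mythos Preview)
Your proof is correct and is essentially the classical argument for this result. Note, however, that the paper does not supply its own proof of this lemma: it is quoted verbatim from \cite{Attouch2000}, Lemma~4.2, and used as a black box. The integrating-factor estimate followed by the monotone decomposition $h = \tilde h + \int_0^{\cdot}[\dot h]^+$ that you outline is exactly the standard proof given in that reference, so there is nothing to compare against here beyond confirming that your argument matches the known one.
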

With these ingredients, we can formulate the main convergence theorem of this subsection.
\begin{theorem}
Assume that the objective functions $f_i$ are convex with gradients $\nabla f_i$ that are Lipschitz continuous on bounded sets. Then, solution $x:[0, +\infty) \to \H$ of \eqref{eq:CP} with arbitrary initial conditions $x^0, v^0 \in \H$ that remains bounded converges weakly to a Pareto critical point of \eqref{eq:MOP}.
\label{thm:x_t_weak_conv_pareto_critical}
\end{theorem}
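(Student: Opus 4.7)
The plan is to apply Opial's Lemma with the set $S$ from Proposition \ref{prop:S_nonempty} (which is already known to be nonempty) and to sharpen the weak limit to a Pareto critical point using Lemma \ref{lem:demiclosedness_co_grad_f}. Both hypotheses of Opial must be verified: every weak sequential cluster point of $x(t)$ lies in $S$, and $\lim_{t\to+\infty}\lVert x(t)-z\rVert$ exists for every $z\in S$.

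The first hypothesis follows by the same argument that shows $S\neq\emptyset$. If $x(t_k)\rightharpoonup x^{\infty}$ along some sequence $t_k\to+\infty$, then part iii) of Proposition \ref{prop:energy_estimation_const} gives $f_i(x(t_k))\to \E_i^{\infty}$, and the weak lower semicontinuity of the convex continuous $f_i$ yields $f_i(x^{\infty})\le \E_i^{\infty}$ for each $i$, so $x^{\infty}\in S$.

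The second hypothesis is the main work. Fix $z\in S$ and set $h_z(t)\coloneqq \tfrac{1}{2}\lVert x(t)-z\rVert^2$. A direct computation gives
\begin{equation*}
\alpha\dot{h}_z(t)+\ddot{h}_z(t)=\langle\alpha\dot{x}(t)+\ddot{x}(t),\,x(t)-z\rangle+\lVert\dot{x}(t)\rVert^2.
\end{equation*}
From \eqref{eq:IMOG'} we have $\alpha\dot{x}(t)+\ddot{x}(t)=-\proj_{C(x(t))}(-\ddot{x}(t))=-\sum_{i=1}^m\theta_i(t)\nabla f_i(x(t))$ for some $\theta(t)\in\Delta_m$, and convexity of each $f_i$ yields $\langle\nabla f_i(x(t)),x(t)-z\rangle\ge f_i(x(t))-f_i(z)$, hence
\begin{equation*}
\langle\alpha\dot{x}(t)+\ddot{x}(t),\,x(t)-z\rangle\le \sum_{i=1}^m\theta_i(t)\bigl(f_i(z)-f_i(x(t))\bigr).
\end{equation*}
Since $\E_i(t)=f_i(x(t))+\tfrac12\lVert\dot{x}(t)\rVert^2$ decreases monotonically to $\E_i^{\infty}$ by Proposition \ref{prop:dissipative_const} and part i) of Proposition \ref{prop:energy_estimation_const}, we obtain $f_i(x(t))\ge \E_i^{\infty}-\tfrac12\lVert\dot{x}(t)\rVert^2$, and combined with $f_i(z)\le \E_i^{\infty}$ this gives $f_i(z)-f_i(x(t))\le \tfrac12\lVert\dot{x}(t)\rVert^2$. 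Taking a convex combination against $\theta(t)\in\Delta_m$ yields the differential inequality
\begin{equation*}
\alpha\dot{h}_z(t)+\ddot{h}_z(t)\le \tfrac{3}{2}\lVert\dot{x}(t)\rVert^2,
\end{equation*}
whose right-hand side lies in $L^1([0,+\infty))$ by part ii) of Proposition \ref{prop:energy_estimation_const}. Lemma \ref{lem:diff_ineq} then gives the existence of $\lim_{t\to+\infty}h_z(t)$, and hence of $\lim_{t\to+\infty}\lVert x(t)-z\rVert$.

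Opial's Lemma now provides some $x^{\infty}\in S$ with $x(t)\rightharpoonup x^{\infty}$. To upgrade this to Pareto criticality, part iv) of Proposition \ref{prop:energy_estimation_const} produces $t_k\to+\infty$ with $g^k\coloneqq\proj_{C(x(t_k))}(0)\in C(x(t_k))$ converging strongly to zero; since $x(t_k)\rightharpoonup x^{\infty}$, Lemma \ref{lem:demiclosedness_co_grad_f} gives $0\in C(x^{\infty})$, so $x^{\infty}$ is Pareto critical. The main obstacle is engineering an $L^1$ right-hand side in the differential inequality: this hinges on the ODE identifying $\alpha\dot{x}+\ddot{x}$ with a convex combination of $-\nabla f_i(x)$, so that convexity produces exactly the quantity that the monotonicity of $\E_i$ then controls by $\tfrac12\lVert\dot{x}\rVert^2$.
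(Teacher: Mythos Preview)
Your proof is correct and follows essentially the same route as the paper: the same anchor function $h_z(t)=\tfrac12\lVert x(t)-z\rVert^2$, the same differential inequality $\alpha\dot h_z+\ddot h_z\le\tfrac32\lVert\dot x\rVert^2$ obtained from the energy chain $\E_i(t)\ge\E_i^\infty\ge f_i(z)$ and convexity, then Lemma \ref{lem:diff_ineq}, Opial, and the demiclosedness argument via part iv) of Proposition \ref{prop:energy_estimation_const}. There is nothing to add.
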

\begin{proof}
For $z\in S$ define
\begin{align*}
    h_z(t) \coloneqq \frac{1}{2}\lVert x(t) - z\rVert ^2.
\end{align*}
Using the chain rule, we compute the first and the second derivative of $h_z(t)$ as
\begin{align*}
    \dot{h}_z(t) = \langle x(t) - z, \dot{x}(t)\rangle, \quad
    \ddot{h}_z(t) = \langle x(t) - z, \ddot{x}(t)\rangle + \lVert \dot{x}(t)\rVert^2.
\end{align*}
For a fixed $t \in (0,+\infty)$, write
\begin{align*}
    \alpha\dot{h}_z(t) + \ddot{h}_z(t) = \langle \ddot{x}(t) + \alpha \dot{x}(t), x(t) - z \rangle + \lVert \dot{x}(t) \rVert^2.
\end{align*}
Using the definition of \eqref{eq:IMOG'}, we can write $\ddot{x}(t) + \alpha \dot{x}(t) = - \sum_{i = 1}^m \theta_i \nabla f_i(x(t))$ for some weights $\theta \in \Delta_m$. Then, we write
\reqnomode
\begin{align}
\label{eq:weak_conv_eq_1}
    \alpha\dot{h}_z(t) + \ddot{h}_z(t) = \sum_{i=1}^m \theta_i \langle \nabla f_i(x(t)), z - x(t) \rangle + \lVert \dot{x}(t) \rVert^2.
\end{align}
Proposition \ref{prop:dissipative_const} gives for all $i = 1,\dots, m$
\begin{align*}
    \E_i(t) = f_i(x(t)) + \frac{1}{2} \lVert \dot{x}(t) \rVert^2 \ge \E_i^{\infty} \ge f_i(z) \ge f_i(x(t)) + \langle \nabla f_i(x(t)), z - x(t) \rangle,
\end{align*}
and therefore
\begin{align}
\label{eq:weak_conv_eq_2}
    \sum_{i = 1}^m \theta_i \langle \nabla f_i(x(t)), z - x(t) \rangle \le \frac{1}{2} \lVert \dot{x}(t) \rVert^2.
\end{align}
Combining inequalities \eqref{eq:weak_conv_eq_1} and \eqref{eq:weak_conv_eq_2} we get
\begin{align*}
    \alpha \dot{h}_z(t) + \ddot{h}_z(t) \le \frac{3}{2} \lVert \dot{x}(t) \rVert^2.
\end{align*}
By Proposition \ref{prop:energy_estimation_const}, we know $\lVert \dot{x}(\cdot) \rVert^2 \in L^1([0, +\infty))$. Then, Lemma \ref{lem:diff_ineq} states that $\lim_{t\to +\infty} h_z(t)$ exists. In addition, we know that every weak sequential cluster point of $x(t)$ belongs to $S$ by the weak lower semicontinuity of the objective functions $f_i$. Then, we can use Opial's Lemma \ref{lem:opial} to prove that $x(t)$ converges weakly to an element in $S$. Let $x^{\infty}$ be the weak limit of $x(t)$. Then, by Proposition \ref{prop:energy_estimation_const}, there exists a monotonically increasing unbounded sequence $(t_k)_{k \ge 0}$ with $\proj_{C(x(t_k))}(0) \to 0$ for $k \to +\infty$. Since $x(t_k)$ converges weakly to $x^{\infty}$, Lemma \ref{lem:demiclosedness_co_grad_f} states that $x^{\infty}$ is Pareto critical.
\end{proof}
\reqnomode
\section{An Inertial Multiobjective Gradient Algorithm}
\label{sec:interntial_multi_grad_algo}
In this section, we derive an inertial first order method for multiobjective optimization problems from an explicit discretization of the differential equation \eqref{eq:IMOG'}. We write the system \eqref{eq:IMOG'} in the equivalent form
\begin{align*}
    \alpha \dot{x}(t) + \proj_{C(x(t)) + \ddot{x}(t)}(0),
\end{align*}
and use the following discretization of the differential equation
\begin{align*}
\begin{split}
    \alpha\frac{x^{k+1} - x^{k}}{h} + \proj_{C(x^k) + \frac{x^{k+1} - 2x^k + x^{k-1}}{h^2}}(0) = 0,\\
    \alpha h (x^{k+1} - x^{k}) + \proj_{h^2C(x^k) + x^{k+1} - 2x^k + x^{k-1}} (0) = 0.
\end{split}
\label{eq:proj_rel_imog}
\end{align*}
Lemma \ref{lem:proj_2} states that $x^{k+1}$ is uniquely defined as
\begin{align}
\begin{split}
    x^{k+1} & = -\left( \frac{1}{1+\alpha h} \proj_{h^2 C(x^k) - 2x^k + x^{k-1}}(-x^k) - \frac{\alpha h}{1 + \alpha h} x^k\right) \\
    & = -\left( \frac{1}{1+\alpha h}\left[- x^k +  \proj_{h^2 C(x^k) - x^k + x^{k-1}}(0)\right] - \frac{\alpha h}{1 + \alpha h} x^k\right) \\
    & = x^k -  \frac{1}{1+\alpha h} \proj_{h^2 C(x^k) - x^k + x^{k-1}}(0).
\end{split}
\end{align}
Therefore, $x^{k+1}$ can be written as
\begin{align}
    x^{k+1} = x^k + \frac{1}{1 + \alpha h}(x^k - x^{k-1}) - \frac{h^2}{1+\alpha h} \sum_{i = 1}^m \theta_i^k \nabla f_i(x^k),
\label{eq:inertial_update}
\end{align}
where $\theta^k \in \Delta_m$ is the solution to the quadratic optimization problem
\begin{align}
\label{eq:QOP_intertial}
\begin{split}
    \min_{\theta \in \R^m} \, \left\lVert h^2 \left(\sum_{i=1}^m \theta_i \nabla f_i(x^k) \right) - (x^k - x^{k-1}) \right\rVert^2\,\,
    \st \, \theta \ge 0 \,\,\text{and}\,\,  \sum_{i = 1}^m \theta_i = 1.
\end{split}
\end{align}
In the following subsection, we analyze the asymptotic behavior of the sequence $(x^k)_{k \ge 0}$ that is defined by equations \eqref{eq:inertial_update} and \eqref{eq:QOP_intertial}. 
\subsection{Asymptotic Analysis}
The asymptotic analysis of the sequence $(x^k)_{k \ge 0}$ defined by \eqref{eq:inertial_update} and \eqref{eq:QOP_intertial} works surprisingly similar to the asymptotic analysis of the trajectories $x(t)$ of the differential equation \eqref{eq:IMOG'}. We start by proving that the sequence $(x^k)_{k \ge 0}$ satisfies a dissipative property. To this end, we introduce the following preparatory lemma.
\begin{mylemma}
Let $(x^k)_{k \ge 0}$ be defined by \eqref{eq:inertial_update} and \eqref{eq:QOP_intertial} with $x^0 = x^1 \in \H$ and $\alpha , h > 0$. Then, for all $i = 1,\dots, m$ it holds that
\begin{align*}
    \langle \nabla f_i(x^k) , x^{k+1} - x^k \rangle \le -\frac{\alpha}{h}\lVert x^{k+1} - x^k \rVert^2 + \frac{1}{2h^2} \left[\lVert x^k - x^{k-1} \rVert^2 - \lVert x^{k+1} - x^k \rVert^2 \right].
\end{align*}
\label{lem:energy_lemma}
\end{mylemma}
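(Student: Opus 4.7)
The plan is to mimic at the discrete level the continuous dissipative estimate from Proposition \ref{prop:dissipative_const}, replacing the variational characterization $-\alpha\dot{x}(t) = \proj_{\co(\nabla f_i(x(t)) + \ddot{x}(t))}(0)$ by the variational characterization of the projection that defines the update \eqref{eq:inertial_update}. The whole argument is scalar and does not use convexity of the $f_i$, only the structure of the scheme.

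First I would rewrite the scheme in its projection form, which is exactly the second line of \eqref{eq:proj_rel_imog}: the vector $p := -\alpha h(x^{k+1}-x^k)$ realises $\proj_K(0)$ on the closed convex set $K := h^2 C(x^k) + (x^{k+1} - 2x^k + x^{k-1})$. The standard variational characterization $\langle p, q\rangle \ge \lVert p\rVert^2$ for every $q \in K$, applied to the particular point $q = h^2\nabla f_i(x^k) + (x^{k+1} - 2x^k + x^{k-1}) \in K$, yields after substituting $p$ and dividing by the negative factor $-\alpha h$ (which flips the inequality)
\begin{align*}
h^2 \langle \nabla f_i(x^k), x^{k+1}-x^k\rangle + \langle x^{k+1} - x^k,\, x^{k+1} - 2x^k + x^{k-1}\rangle \le -\alpha h\lVert x^{k+1} - x^k\rVert^2.
\end{align*}

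The remaining step is purely algebraic. Setting $a := x^{k+1}-x^k$ and $b := x^k - x^{k-1}$, so that $x^{k+1}-2x^k + x^{k-1} = a - b$, the polarisation identity gives
\begin{align*}
\langle a, a-b\rangle = \tfrac{1}{2}\bigl(\lVert a\rVert^2 - \lVert b\rVert^2 + \lVert a-b\rVert^2\bigr) \ge \tfrac{1}{2}\bigl(\lVert a\rVert^2 - \lVert b\rVert^2\bigr).
\end{align*}
Inserting this lower bound into the previous inequality and dividing through by $h^2$ produces exactly the claimed estimate.

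I do not expect a serious obstacle here. The only points that require some care are (i) correctly identifying the closed convex set $K$ on which $p = -\alpha h(x^{k+1}-x^k)$ realises the projection of $0$, so that the variational inequality can be tested against the distinguished element associated with the index $i$, and (ii) keeping track of the sign when dividing by $-\alpha h < 0$, which is precisely where the dissipative term $-\tfrac{\alpha}{h}\lVert x^{k+1}-x^k\rVert^2$ on the right-hand side originates.
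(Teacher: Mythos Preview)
Your proposal is correct and follows essentially the same route as the paper: both start from the variational characterization of the projection in \eqref{eq:proj_rel_imog} tested against the element $h^2\nabla f_i(x^k) + (x^{k+1}-2x^k+x^{k-1})$, and then absorb the cross term via the elementary inequality $\langle a,b\rangle \le \tfrac12(\lVert a\rVert^2+\lVert b\rVert^2)$ (equivalently, your polarisation bound $\langle a,a-b\rangle \ge \tfrac12(\lVert a\rVert^2-\lVert b\rVert^2)$). The only cosmetic difference is that the paper first isolates $\langle \nabla f_i(x^k),x^{k+1}-x^k\rangle$ and then bounds $\langle a,b\rangle$, whereas you keep $\langle a,a-b\rangle$ together; the algebra is identical.
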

\begin{proof}
Using the variational characterization of the convex projection in the identity \eqref{eq:proj_rel_imog}, we get for all $i = 1,\dots, m$,
\begin{align*}
    \langle \alpha h (x^{k+1} - x^k) + h^2 \nabla f_i(x^k) + (x^{k+1} - x^k) - (x^k - x^{k-1}), \alpha h (x^{k+1} - x^k) \rangle \le 0,
\end{align*}
which can be rearranged into
\begin{align}
\label{eq:grad_step_esti}
    \begin{split}
    &\langle \nabla f_i(x^k) , x^{k+1} - x^k \rangle\\
    \le & -\left(\frac{1}{h^2} + \frac{\alpha}{h}\right) \lVert x^{k+1} - x^k \rVert + \frac{1}{h^2} \langle x^{k+1} - x^k, x^k - x^{k-1} \rangle\\
    \le & -\frac{\alpha}{h}\lVert x^{k+1} - x^k \rVert^2 + \frac{1}{2h^2} \left[\lVert x^k - x^{k-1} \rVert^2 - \lVert x^{k+1} - x^k \rVert^2 \right].
    \end{split}
\end{align}
\end{proof}
Using Lemma \ref{lem:energy_lemma}, we show that there exists an energy sequence which can be seen as a discretization of the energy function defined in Proposition \ref{prop:energy_estimation_const}.
\begin{myprop}
Assume that the gradients $\nabla f_i$ of the objective functions are globally $L$-Lipschitz continuous for all $i = 1,\dots,m$ and further assume $Lh < 2\alpha$. Then 
\begin{align}
    \E_{i,k} \coloneqq f_i(x^k) + \frac{1}{2h^2}\lVert x^{k} - x^{k-1} \rVert^2,
\label{eq:discrete_IMOG_energy}
\end{align}
is monotonically decreasing.
\label{prop:discrete_IMOG_energy}
\end{myprop}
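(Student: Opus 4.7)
The plan is to combine the variational inequality from Lemma~\ref{lem:energy_lemma} with the standard descent lemma for functions with $L$-Lipschitz gradients, so that the gradient inner product on the right-hand side of the descent lemma is absorbed by the energy terms.

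First, I would invoke the descent lemma: since $\nabla f_i$ is $L$-Lipschitz, we have
\begin{align*}
    f_i(x^{k+1}) \le f_i(x^k) + \langle \nabla f_i(x^k), x^{k+1} - x^k \rangle + \frac{L}{2}\lVert x^{k+1} - x^k \rVert^2.
\end{align*}
Next I would apply Lemma~\ref{lem:energy_lemma} to bound the inner product term, which yields
\begin{align*}
    f_i(x^{k+1}) \le f_i(x^k) + \frac{1}{2h^2}\lVert x^k - x^{k-1} \rVert^2 - \frac{1}{2h^2}\lVert x^{k+1} - x^k \rVert^2 + \left(\frac{L}{2} - \frac{\alpha}{h}\right)\lVert x^{k+1} - x^k \rVert^2.
\end{align*}
Rearranging so that the kinetic term $\frac{1}{2h^2}\lVert x^{k+1} - x^k \rVert^2$ sits next to $f_i(x^{k+1})$ gives exactly
\begin{align*}
    \E_{i,k+1} \le \E_{i,k} + \left(\frac{L}{2} - \frac{\alpha}{h}\right)\lVert x^{k+1} - x^k \rVert^2.
\end{align*}

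Finally, the hypothesis $Lh < 2\alpha$ is equivalent to $\frac{L}{2} - \frac{\alpha}{h} < 0$, so the last term is non-positive and we conclude $\E_{i,k+1} \le \E_{i,k}$ for every $i = 1,\dots,m$. There is no substantial obstacle; the only delicate point is ensuring the coefficient condition matches between the descent lemma ($\tfrac{L}{2}$) and the inertial estimate ($\tfrac{\alpha}{h}$), which is precisely what the assumption $Lh < 2\alpha$ guarantees. One could also observe in passing that the decrease is strict as long as $x^{k+1} \neq x^k$, which will be useful in the subsequent convergence analysis.
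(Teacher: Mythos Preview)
Your proof is correct and follows essentially the same route as the paper: descent lemma plus Lemma~\ref{lem:energy_lemma}, then read off the sign of $\frac{L}{2}-\frac{\alpha}{h}$. The only cosmetic difference is that the paper uses the slightly sharper intermediate bound from the proof of Lemma~\ref{lem:energy_lemma} (before the polarization step) and therefore records the additional non-positive term $-\frac{1}{2h^2}\lVert x^{k+1}-2x^k+x^{k-1}\rVert^2$ in the final inequality~\eqref{eq:energy_decrease}; this extra term is not needed for monotonicity itself, though it reappears in the summation argument of Proposition~\ref{prop:discrete_IMOG_energy_properties}.
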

\begin{proof}
We start with investigating the difference
\begin{align*}
    \E_{i,k+1} - \E_{i,k} = f_i(x^{k+1}) - f_i(x^k) + \frac{1}{2h^2} \left[ \lVert x^{k+1} - x^k \rVert - \lVert x^k - x^{k-1} \rVert \right].
\end{align*}
Using that $f_i$ is convex with $L$-Lipschitz continuous gradient, we estimate the expression above by
\begin{align*}
    \le \langle \nabla f_i(x^k), x^{k+1} - x^k \rangle + \frac{L}{2}\lVert x^{k+1} - x^k \rVert^2 + \frac{1}{2h^2} \left[ \lVert x^{k+1} - x^k \rVert - \lVert x^k - x^{k-1} \rVert \right].
\end{align*}
Using Lemma \ref{lem:energy_lemma}, we estimate this term by
\begin{align*}
    \le& \left( \frac{L}{2} - \frac{\alpha}{h} \right) \lVert x^{k+1} - x^k \rVert^2 - \frac{1}{2 h^2} \lVert x^{k+1} - 2x^k + x^{k-1} \rVert^2.
\end{align*}
For $hL < 2 \alpha$ it holds that $\left( \frac{L}{2} - \frac{\alpha}{h} \right) < 0$ and we get
\begin{align}
    \E_{i,k+1} - \E_{i,k} \le\left( \frac{L}{2} - \frac{\alpha}{h} \right) \lVert x^{k+1} - x^k \rVert^2 - \frac{1}{2 h^2} \lVert x^{k+1} - 2x^k + x^{k-1} \rVert^2,
    \label{eq:energy_decrease}
\end{align}
which completes the proof.
\end{proof}
The following corollary is an immediate consequence of Proposition \ref{prop:discrete_IMOG_energy}.
\begin{mycorollary}
Assume all conditions of Proposition \ref{prop:discrete_IMOG_energy} are met. Then, for all $i = 1,\dots, m$ and all $k \ge 1$ it holds that $f_i(x^k) \le f_i(x^0)$.
\label{cor:x_k_level_set_inertial}
\end{mycorollary}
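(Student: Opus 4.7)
The plan is to read this off directly from Proposition \ref{prop:discrete_IMOG_energy} by telescoping the energy and exploiting the fact that the iteration is started from rest, i.e.\ $x^0 = x^1$ (the initialization assumed in Lemma \ref{lem:energy_lemma}, which feeds into Proposition \ref{prop:discrete_IMOG_energy}).

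First I would observe that, since $\frac{1}{2h^2}\lVert x^k - x^{k-1}\rVert^2 \ge 0$, the definition \eqref{eq:discrete_IMOG_energy} of $\mathcal{E}_{i,k}$ immediately yields
\begin{align*}
    f_i(x^k) \le \mathcal{E}_{i,k} \quad \text{for all } k \ge 1.
\end{align*}
Next, Proposition \ref{prop:discrete_IMOG_energy} tells us that the sequence $(\mathcal{E}_{i,k})_{k \ge 1}$ is monotonically decreasing under the hypothesis $Lh < 2\alpha$, so in particular $\mathcal{E}_{i,k} \le \mathcal{E}_{i,1}$ for every $k \ge 1$.

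Finally I would evaluate $\mathcal{E}_{i,1}$ using the initialization $x^0 = x^1$. Plugging into \eqref{eq:discrete_IMOG_energy} gives $\mathcal{E}_{i,1} = f_i(x^1) + \frac{1}{2h^2}\lVert x^1 - x^0 \rVert^2 = f_i(x^0)$, since the kinetic-type term vanishes and $f_i(x^1) = f_i(x^0)$. Chaining these three observations produces $f_i(x^k) \le \mathcal{E}_{i,k} \le \mathcal{E}_{i,1} = f_i(x^0)$, which is exactly the claim. There is no real obstacle here; the only point to be careful about is making the initialization $x^0 = x^1$ explicit, so that the velocity contribution to $\mathcal{E}_{i,1}$ is genuinely zero and the bound reduces to $f_i(x^0)$ rather than some initial-energy constant.
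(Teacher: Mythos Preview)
Your argument is correct and is exactly the ``immediate consequence'' the paper has in mind: drop the nonnegative kinetic term, use the monotonicity of $\mathcal{E}_{i,k}$ from Proposition~\ref{prop:discrete_IMOG_energy}, and evaluate $\mathcal{E}_{i,1}$ via the initialization $x^0=x^1$. The paper does not spell out a proof beyond calling it an immediate consequence, so there is nothing to compare.
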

Corollary \ref{cor:x_k_level_set_inertial} hints at a condition that guarantees that the sequence $(x^k)_{k\ge 0}$ remains bounded. If the level set $\mathcal{L}_i(f_i(x^0)) \coloneqq \lbrace x \in \H \,:\, f_i(x) \le f_i(x^0) \rbrace$ of one objective function $f_i$ is bounded, the sequence $(x^k)_{k \ge 0}$ remains bounded. In the following proposition we collect some immediate consequences of Proposition \ref{prop:discrete_IMOG_energy}.
\begin{myprop}
Assume the gradients $\nabla f_i$ of the objective functions are $L$-Lipschitz continuous on a bounded set containing the sequence $(x^k)_{k\ge 0}$, that is defined by equations \eqref{eq:inertial_update} and \eqref{eq:QOP_intertial}. Assume $Lh < 2\alpha$, then for all $i = 1,\dots, m$ the following statements hold.
\begin{enumerate}[i)]
    \item $\E_{i,k} \to \E_i^{\infty} > - \infty$ as $k \to +\infty$
    \item $\sum_{k = 0}^\infty \lVert x^{k+1} - x^k \rVert^2 < +\infty$
    \item $f_i(x^k) \to \E_i^{\infty}$ as $ k \to + \infty$
\end{enumerate}
\label{prop:discrete_IMOG_energy_properties}
\end{myprop}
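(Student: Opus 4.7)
The plan is to exploit the energy decrease inequality \eqref{eq:energy_decrease} from Proposition \ref{prop:discrete_IMOG_energy} as the single unifying tool, mirroring the continuous argument in Proposition \ref{prop:energy_estimation_const}. All three items then reduce to routine consequences of monotonicity plus a telescoping estimate.

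For part i), the monotonicity of $\E_{i,k}$ established in Proposition \ref{prop:discrete_IMOG_energy} immediately gives convergence to some $\E_i^{\infty} \in [-\infty, \E_{i,0}]$. To rule out $-\infty$, I would argue that the sequence $(x^k)_{k\ge 0}$ lies in a bounded set on which $\nabla f_i$ is $L$-Lipschitz; hence by the mean value theorem $f_i$ is Lipschitz and therefore bounded on that set. Since $\frac{1}{2h^2}\lVert x^k - x^{k-1}\rVert^2 \ge 0$, I conclude $\E_{i,k} \ge \inf_{k\ge 0} f_i(x^k) > -\infty$, which establishes $\E_i^{\infty} > -\infty$.

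For part ii), I would rearrange \eqref{eq:energy_decrease} as
\begin{align*}
    \left(\frac{\alpha}{h} - \frac{L}{2}\right) \lVert x^{k+1} - x^k\rVert^2 \le \E_{i,k} - \E_{i,k+1} - \frac{1}{2h^2}\lVert x^{k+1} - 2x^k + x^{k-1}\rVert^2 \le \E_{i,k} - \E_{i,k+1}.
\end{align*}
The assumption $Lh < 2\alpha$ ensures the coefficient on the left is strictly positive. Summing over $k = 0, \dots, N$ telescopes the right-hand side to $\E_{i,0} - \E_{i,N+1}$, which by part i) is bounded above by $\E_{i,0} - \E_i^{\infty} < +\infty$ uniformly in $N$. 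Letting $N \to \infty$ yields $\sum_{k=0}^{\infty} \lVert x^{k+1} - x^k\rVert^2 < +\infty$.

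For part iii), part ii) implies in particular that $\lVert x^{k+1} - x^k\rVert \to 0$ as $k\to\infty$, so $\frac{1}{2h^2}\lVert x^k - x^{k-1}\rVert^2 \to 0$. Solving the definition \eqref{eq:discrete_IMOG_energy} for $f_i(x^k)$ and passing to the limit using part i) yields $f_i(x^k) \to \E_i^{\infty}$. I do not foresee any substantial obstacle: the proof is essentially a telescoping argument, and the only subtlety is invoking the local Lipschitz hypothesis to guarantee that $f_i$ is bounded below on the bounded set containing the iterates, which is handled by the mean value theorem as outlined above.
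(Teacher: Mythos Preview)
Your proposal is correct and follows essentially the same argument as the paper: both use the monotonicity from Proposition~\ref{prop:discrete_IMOG_energy} together with the mean value theorem to obtain the lower bound in i), telescope the energy-decrease inequality \eqref{eq:energy_decrease} for ii), and deduce iii) from the vanishing of the kinetic term. The only cosmetic difference is that the paper sums from $k=1$ rather than $k=0$, which is immaterial since $x^0 = x^1$.
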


\begin{proof}
i) \quad Proposition \ref{prop:discrete_IMOG_energy} states that $\E_{i,k}$ is monotonically decreasing. Therefore, $\E_{i,k} \to \E_{i}^{\infty}$ holds. We have to show that $\E_i^{\infty} > -\infty$. Since the objective functions $f_i$ have Lipschitz continuous gradients on a bounded set containing $(x^k)_{k\ge 0}$, it follows by the mean value theorem that $f_i$ is bounded on this sets and in particular on $(x^k)_{k \ge 0}$. Therefore, we conclude
\begin{align*}
    \E_{i}^{\infty} = \lim_{k \to +\infty} f_i(x^k) +  \frac{1}{2h^2} \lVert x^k - x^{k-1} \rVert^2 \ge \liminf_{k \to + \infty} f_i(x^k) > -\infty.
\end{align*}
ii) \quad From the inequality \eqref{eq:energy_decrease} we immediately follow
\begin{align}
\label{eq:finite_energy_sum}
\begin{split}
    &\E_{i, K+1} - \E_{i,1} = \sum_{k=1}^K \left( \E_{i, k+1} - \E_{i,k} \right) \\
    \le& \sum_{k=1}^K \left(\frac{L}{2} - \frac{\alpha}{h}\right) \lVert x^{k+1} - x^k \rVert^2  - \frac{1}{h^2} \sum_{k=1}^K \lVert  x^{k+1} - 2x^k + x^{k-1} \rVert^2 .
\end{split}
\end{align}
Since $Lh < 2\alpha$, it holds that $\left(\frac{\alpha}{h} - \frac{L}{2}\right) > 0$ and therefore we get for all $K \ge 1$
\begin{align*}
    \left(\frac{\alpha}{h} - \frac{L}{2}\right)\sum_{k=1}^K \lVert x^{k+1} - x^k \rVert^2 \le 
    \E_{i, 1} - \E_{i, K+1}.
\end{align*}
From part i), we know that the right hand side converges which completes the proof of ii).\\ \\
iii)\quad Since $\E_{i,k}  \to \E_i^{\infty}$ and $\lVert x^{k+1} - x^k \rVert^2 \to 0$, it follows that $f_i(x^k) \to \E_i^{\infty}$.
\end{proof}
We use the following discrete version of Opial's Lemma to prove that $(x^k)_{k\ge 0}$ converges weakly to a Pareto critical point of \eqref{eq:MOP}. 
\begin{mylemma}[Opial's Lemma \cite{Opial1967}]
Let $S \subset \H$ be nonempty and let $(x_k)_{k\ge 0}$ be a sequence in $\H$ that satisfies the following conditions.
\begin{enumerate}[i)]
    \item For all $z \in S$ $\lim_{k \to + \infty} \lVert x^k - z\rVert $ exists.
    \item Every weak sequential cluster point of $(x_k)_{k\ge 0}$ belongs to $S$.
\end{enumerate}
Then, it follows that $(x_k)_{k\ge 0}$ converges weakly to an element in $S$.
\label{lem:opial}
\end{mylemma}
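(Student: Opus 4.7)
The plan is to verify the two standard ingredients for Opial's lemma in Hilbert space: first, that the hypothesis (i) forces $(x^k)_{k \ge 0}$ to be norm-bounded, which guarantees the existence of weak cluster points; and second, that the combined hypotheses force at most one such weak cluster point, so that by a subsequence argument the whole sequence converges weakly to that unique point, which by (ii) lies in $S$.

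For boundedness, I would fix some $z_0 \in S$ (which exists by the nonemptiness of $S$) and observe that (i) implies the real sequence $\lVert x^k - z_0 \rVert$ converges, hence is bounded. Thus $(x^k)_{k \ge 0}$ is a bounded sequence in the Hilbert space $\H$, and the Banach-Alaoglu theorem yields at least one weak sequential cluster point $x^\infty \in \H$, which by (ii) belongs to $S$.

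The core step is the uniqueness of the weak cluster point. Suppose for contradiction that $x^\infty_1, x^\infty_2 \in S$ are two distinct weak cluster points of $(x^k)_{k \ge 0}$, obtained as the weak limits of subsequences $(x^{k_j})_j$ and $(x^{k_j'})_j$. I would use the polarization-type identity
\begin{align*}
    \lVert x^k - x^\infty_1 \rVert^2 - \lVert x^k - x^\infty_2 \rVert^2 = 2\langle x^k, x^\infty_2 - x^\infty_1 \rangle + \lVert x^\infty_1 \rVert^2 - \lVert x^\infty_2 \rVert^2.
\end{align*}
By hypothesis (i), the left-hand side converges as $k \to +\infty$, so the real sequence $\langle x^k, x^\infty_2 - x^\infty_1 \rangle$ converges to some $\ell \in \R$. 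Evaluating this limit along the two subsequences (using weak convergence $x^{k_j} \rightharpoonup x^\infty_1$ and $x^{k_j'} \rightharpoonup x^\infty_2$) yields $\ell = \langle x^\infty_1, x^\infty_2 - x^\infty_1 \rangle = \langle x^\infty_2, x^\infty_2 - x^\infty_1 \rangle$, which rearranges to $\lVert x^\infty_1 - x^\infty_2 \rVert^2 = 0$, contradicting $x^\infty_1 \neq x^\infty_2$.

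Since the bounded sequence $(x^k)_{k \ge 0}$ has a unique weak sequential cluster point, a standard subsequence argument gives that the entire sequence converges weakly to that point, which lies in $S$ by (ii). The only nontrivial step is the uniqueness argument with the polarization identity; the rest is a routine application of boundedness and the Eberlein–Šmulian-type consequence that bounded sequences in Hilbert space admit weakly convergent subsequences. No step presents a real obstacle, as this is a well-known classical result.
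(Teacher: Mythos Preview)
Your argument is correct and is the standard proof of Opial's lemma: boundedness from (i), existence of weak cluster points via reflexivity, and uniqueness via the polarization identity combined with evaluation along the two weakly convergent subsequences. Note, however, that the paper does not actually prove this lemma; it is stated as a classical result with a citation to \cite{Opial1967} and used as a black box, so there is no paper proof to compare against. Your write-up supplies exactly the usual self-contained justification.
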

We will use Opial's Lemma on the set
\begin{align}
\label{def:discrete_S}
    S \coloneqq \left\lbrace z \in \H \,:\, f_i(z) \le \lim_{k \to + \infty}f_i(x_k) \,\,\text{for all  } i = 1,\dots, m,\right\rbrace.
\end{align}

\begin{theorem}
Assume the gradients $\nabla f_i$ of the objective functions are $L$-Lipschitz continuous on a bounded set containing the sequence $(x^k)_{k\ge 0}$, defined by \eqref{eq:inertial_update} and \eqref{eq:QOP_intertial} and further assume $Lh < 2\alpha$. Then, $(x^k)_{k \ge 0}$ converges weakly to a Pareto critical point of \eqref{eq:MOP}.
\end{theorem}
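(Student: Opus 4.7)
The plan is to mirror the continuous-time argument of Theorem \ref{thm:x_t_weak_conv_pareto_critical}, replacing the continuous Opial lemma and the integrability statement by their discrete counterparts. Concretely, I would apply Lemma \ref{lem:opial} to the set $S$ defined in \eqref{def:discrete_S}, and then invoke the demiclosedness property (Lemma \ref{lem:demiclosedness_co_grad_f}) to upgrade the weak limit to a Pareto critical point. Condition (ii) of Opial is immediate: $(x^k)_{k\ge 0}$ lies in a bounded set by hypothesis, so it has weak sequential cluster points; any such cluster point $x^{\infty}$ satisfies $f_i(x^{\infty})\le \liminf_k f_i(x^k) = \E_i^{\infty}$ by weak lower semicontinuity of the convex continuous $f_i$ combined with Proposition \ref{prop:discrete_IMOG_energy_properties}(iii), hence lies in $S$. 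In particular $S$ is nonempty.

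\textbf{The discrete differential inequality.} The key technical step is condition (i) of Opial, i.e.\ showing that $h_z^k \coloneqq \tfrac{1}{2}\lVert x^k - z\rVert^2$ converges for every $z\in S$. Rewriting \eqref{eq:inertial_update} as
\begin{align*}
    (x^{k+1} - 2x^k + x^{k-1}) + \alpha h(x^{k+1} - x^k) = -h^2\sum_{i=1}^m \theta_i^k \nabla f_i(x^k),
\end{align*}
taking inner product with $x^k - z$, and expanding second differences via the identity
\begin{align*}
    \langle x^{k+1} - 2x^k + x^{k-1},\, x^k - z\rangle = (h_z^{k+1} - h_z^k) - (h_z^k - h_z^{k-1}) - \tfrac{1}{2}\lVert x^{k+1}-x^k\rVert^2 - \tfrac{1}{2}\lVert x^k - x^{k-1}\rVert^2,
\end{align*}
I would arrive, with $a_k \coloneqq h_z^{k+1}-h_z^k$, at an equation of the form
\begin{align*}
    (1+\alpha h)a_k - a_{k-1} = h^2\sum_{i=1}^m \theta_i^k \langle \nabla f_i(x^k),\, z - x^k\rangle + R_k,
\end{align*}
where $R_k$ is a positive linear combination of $\lVert x^{k+1}-x^k\rVert^2$ and $\lVert x^k - x^{k-1}\rVert^2$. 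Convexity of $f_i$ gives $\langle \nabla f_i(x^k), z - x^k\rangle \le f_i(z) - f_i(x^k)$, and from $f_i(z)\le \E_i^{\infty}\le \E_{i,k} = f_i(x^k) + \tfrac{1}{2h^2}\lVert x^k - x^{k-1}\rVert^2$ (Proposition \ref{prop:discrete_IMOG_energy}) one obtains $f_i(z) - f_i(x^k)\le \tfrac{1}{2h^2}\lVert x^k - x^{k-1}\rVert^2$. Combining these estimates upgrades the identity to $(1+\alpha h)a_k \le a_{k-1} + \varepsilon_k$, with $\varepsilon_k \ge 0$ and $\sum_k \varepsilon_k < +\infty$ by Proposition \ref{prop:discrete_IMOG_energy_properties}(ii).

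\textbf{Conclusion.} Because $1/(1+\alpha h) < 1$, a standard geometric-convolution argument applied to positive parts yields $\sum_k [a_k]_+ < +\infty$; since $h_z^k \ge 0$, the partial sums of $[a_k]_-$ are bounded above by $h_z^0 + \sum_k [a_k]_+$, hence both series converge and $\lim_{k\to\infty} h_z^k$ exists, verifying condition (i) of Opial. Lemma \ref{lem:opial} then gives $x^k \rightharpoonup x^{\infty}$ for some $x^{\infty}\in S$. Finally, from \eqref{eq:finite_energy_sum} both $\lVert x^{k+1}-x^k\rVert^2$ and $\lVert x^{k+1}-2x^k+x^{k-1}\rVert^2$ are summable, hence tend to zero, and the reformulated update rule yields
\begin{align*}
    g^k \coloneqq \sum_{i=1}^m \theta_i^k \nabla f_i(x^k) = -\frac{1}{h^2}\bigl[(x^{k+1}-2x^k+x^{k-1}) + \alpha h (x^{k+1}-x^k)\bigr] \longrightarrow 0
\end{align*}
strongly, with $g^k\in C(x^k)$. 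Lemma \ref{lem:demiclosedness_co_grad_f} then gives $0\in C(x^{\infty})$, i.e.\ $x^{\infty}$ is Pareto critical. I expect the main obstacle to be the convexity step at finite $k$: since $f_i(x^k)$ need not dominate $f_i(z)$, one must absorb the finite-time discrepancy into the summable error $\varepsilon_k$ via the energy monotonicity (Proposition \ref{prop:discrete_IMOG_energy}), which is the non-obvious place where the hypothesis $Lh<2\alpha$ really enters the Opial-type argument.
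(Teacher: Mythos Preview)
Your proposal is correct and follows essentially the same strategy as the paper's proof: both establish the recursive inequality $(1+\alpha h)\,(h_z^{k+1}-h_z^k) \le (h_z^k-h_z^{k-1}) + \varepsilon_k$ with summable $\varepsilon_k$ by combining the update rule, convexity, and the energy bound $f_i(z)\le \E_{i,k}$, then feed this into Opial's Lemma and conclude via the demiclosedness Lemma~\ref{lem:demiclosedness_co_grad_f} after observing $g^k\to 0$. The only cosmetic differences are that you reach the recursion through the second-difference identity while the paper manipulates the first difference $h_{k+1}-h_k$ directly, and that you spell out the geometric-convolution/positive-part argument whereas the paper cites the result of Alvarez (\cite{Alvarez2000,Alvarez2001}) for the same conclusion.
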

\begin{proof}
We show that $(x^k)_{k \ge 0}$ satisfies Opial's Lemma for the set $S$ defined by \eqref{def:discrete_S}. We start by showing a quasi Fejér property of the sequence $(x_k)_{k\ge 0}$. For a fixed $z \in S$, define the sequence
\begin{align*}
    h_k \coloneqq \frac{1}{2}\lVert x^{k} - z \rVert^2.
\end{align*}
It is easy to check that 
\begin{align*}
    h_{k+1} &= h_k + \langle x^{k+1} - x^k, x^k - z \rangle + \frac{1}{2}\lVert x^{k+1} - x^k \rVert^2.
\end{align*}
Proposition \ref{prop:discrete_IMOG_energy_properties} guarantees the monotonicity of $\E_{i,k}$. Since $z \in S$, from the convexity of $f_i$ we can deduce for all $i = 1,\dots, m$ that
\begin{align*}
    &\E_{i,k} = f_i(x^k) + \frac{1}{2h^2} \lVert x^k - x^{k-1} \rVert^2 \ge \E_i^{\infty} \ge f_i(z) \ge f_i(x^k) + \langle \nabla f_i(x^k), z - x^k \rangle,
\end{align*}
and therefore
\begin{align*}
    \langle \sum_{i=1}^m \theta_i\nabla f_i(x^k), z - x^k \rangle \le \frac{1}{2h^2} \lVert x^k - x^{k-1} \rVert^2.
\end{align*}
Using this inequality we can show
\begin{align*}
    &\frac{h^2}{1+\alpha h} \langle \sum_{i=1}^m \theta_i^k \nabla f_i(x^k) , z - x^k \rangle = \langle x^k - x^{k+1} - \frac{1}{1+\alpha h}(x^k - x^{k-1}), z - x^k \rangle\\
    = &\langle x^{k+1} - x^k , x^k - z \rangle - \frac{1}{1+\alpha h} \langle x^{k-1} - x^k, x^k - z \rangle \le \frac{1}{2(1 + \alpha h)}\lVert x^k - x^{k-1} \rVert^2,
\end{align*}
which leads to the inequality
\begin{align*}
    \langle x^{k+1} - x^k , x^k - z \rangle \le \frac{1}{1+\alpha h} \langle x^{k-1} - x^k, x^k - z \rangle + \frac{1}{2(1 + \alpha h)}\lVert x^k - x^{k-1} \rVert^2.
\end{align*}
We use this inequality to show
\begin{align*}
    h_{k+1} - h_k = &\langle x^{k+1} - x^k, x^k - z \rangle + \frac{1}{2}\lVert x^{k+1} - x^k \rVert^2\\
    \le &\frac{1}{1+\alpha h} \langle x^{k-1} - x^k, x^k - z \rangle + \frac{1}{2}\lVert x^{k+1} - x^k \rVert^2 + \frac{1}{2(1 + \alpha h)}\lVert x^k - x^{k-1} \rVert^2\\
    = & \frac{1}{1+\alpha h} \left[ h_{k} - h_{k-1}  + \frac{1}{2}\lVert x^k - x^{k-1} \rVert^2 \right] + \frac{1}{2}\lVert x^{k+1} - x^k \rVert^2\\
    &+ \frac{1}{2(1 + \alpha h)} \lVert x^k - x^{k-1} \rVert^2\\
    \le & \frac{1}{1+\alpha h} (h_{k} - h_{k-1}) + \frac{1}{2}\lVert x^{k+1} - x^k \rVert^2 + \frac{1}{1 + \alpha h}\lVert x^k - x^{k-1} \rVert^2.
\end{align*}
Defining $\theta_k \coloneqq h_{k+1} - h_k$, $\delta_k \coloneqq \frac{1}{1+\alpha h}\lVert x^{k} - x^{k-1} \rVert^2 + \frac{1}{2}\lVert x^{k+1} - x^k \rVert^2$ and $a \coloneqq \frac{1}{1+\alpha h}$, we can therefore conclude
\begin{align*}
    \theta_{k+1} \le a \theta_k + \delta_k.
\end{align*}
Proposition \ref{prop:discrete_IMOG_energy_properties} states that $\sum_{k=1}^\infty \delta_k < +\infty$. Therefore, we can use Theorem 2.1 in \cite{Alvarez2001} or Theorem 3.1 in \cite{Alvarez2000} to show that $h_k$ converges. To use Opial's Lemma, we also have to show that all weak sequential cluster points of $(x_k)_{k\ge 0}$ belong to $S$. Since the sequence $(x_k)_{k\ge 0}$ is bounded, it possesses at least one sequential cluster point that we denote by $x^{\infty}$ and a subsequence $(x_{k_l})_{l\ge 0}$ that converges weakly to $x^{\infty}$. Since $f_i$ is convex and continuous, it is also weakly lower semicontinuous and it follows that for all $i = 1,\dots, m$
\begin{align*}
    f_i(x^{\infty}) \le \liminf_{l \to + \infty} f_i(x_{k_l}) = \lim_{k\to +\infty} f_i(x_k),
\end{align*}
where the equality follows from the fact that the limit exists. Therefore, $x^{\infty} \in S$ and hence $S$ is nonempty. Then, Opial's Lemma \ref{lem:opial} states that $(x^k)_{k \ge 0}$ converges weakly to an element in $S$ that we denote by $x^{\infty}$. We will show that each weak sequential cluster point of $(x^k)_{k \ge 0}$ is Pareto critical. By the definition of the sequence $(x^k)_{k \ge 0}$ in \eqref{eq:inertial_update}, it holds that
\begin{align*}
    \sum_{k=1}^{\infty}\left\lVert \sum_{i=1}^m\theta_i^k\nabla f_i(x^k) \right\rVert^2
    &= \sum_{k=1}^{\infty}\left\lVert \frac{1 + \alpha h}{h^2}(x^{k+1} - x^{k}) + \frac{1}{h^2}(x^k - x^{k-1}) \right\rVert^2.
\end{align*}
This sum is finite by part ii) of Proposition \ref{prop:discrete_IMOG_energy_properties}. Thus, we know that the sequence $g^k \coloneqq \sum_{i=1}^m\theta_i^k\nabla f_i(x^k) \in \co(\nabla f_i(x^k))$ converges strongly to zero. Since $x^k$ converges weakly to $x^{\infty}$, Lemma \ref{lem:demiclosedness_co_grad_f} states that $0 \in \co(\nabla f_i(x^{\infty}))$ and hence $x^{\infty}$ is Pareto critical. 
\end{proof}
\section{An Accelerated Multiobjective Gradient Method}
\label{sec:acc_multi_grad_algo}
In this section, we define a multiobjective gradient method with Nesterov acceleration based on the inertial method we discussed in the previous subsection.
\subsection{The single objective Case}
In this subsection we present Nesterov's method in the single objective setting and point out its relation to an intertial gradient-like dynamical system with asymptotically vanishing damping. Consider the problem
\begin{align*}
    \min_{x \in \H} f(x),
\end{align*}
where $f:\H \to \R$ is convex and differentiable with $L$-Lipschitz continuous gradient $\nabla f(x)$. For $\alpha \ge 3$, $0 < s \le \frac{1}{L}$ and $x^0, x^1 \in \H$, define the sequence $(x^k)_{k \ge 0}$ by
\begin{align}
\left.
\begin{array}{rl}
    y^k &= x^k + \frac{k -1}{k + \alpha - 1}(x^k - x^{k-1}),\\
    x^{k+1} &= y^k - s \nabla f(y^k)
\end{array}
    \right\rbrace \,\,\text{for}\,\, k \ge 1.
\label{eq:Nesterov_method_single objective}
\end{align}
If $\argmin f \neq \emptyset$ it can be shown that $f(x^k) - \min_{x\in \H} f(x) = \mathcal{O}(k^{-2})$ and that $\lVert x^{k+1} - x^k \rVert = \mathcal{O}(k^{-1})$. For $\alpha > 3$, it holds that $f(x^k) - \min_{x\in \H} f(x) = o(k^{-2})$, $\lVert x^{k+1} - x^k \rVert = o(k^{-1})$ and that $(x^k)_{k \ge 0}$ converges weakly to an element in $\argmin f$  \cite{Attouch2015_2}. Nesterov's method is related to the following gradient-like dynamical system with asymptotically vanishing damping
\begin{align}
    \ddot{x}(t) + \frac{\alpha}{t} \dot{x}(t) + \nabla f(x(t)) = 0.
    \label{eq:AVD_Single_objective}
\end{align}
The algorithm \eqref{eq:Nesterov_method_single objective} can be derived as a discretization of \eqref{eq:AVD_Single_objective}. This relation is further investigated in \cite{Su2014, Attouch2022}.
\subsection{Introducing Nesterov Acceleration in \eqref{eq:IMOG'}}
\label{subsec:intro_new_acc_imog'}
We formally define the following gradient-like system with asymptotically vanishing damping for multiobjective optimization. 
\begin{align}
\ddot{x}(t) + \frac{3}{t}\dot{x}(t) + \proj_{C(x(t))}{(-\ddot{x}(t))} = 0.
\label{eq:AVD_Single_objective_a=3}
\end{align}
We will not analyze this system in detail but leave this for future work. We show that an implicit discretization of this system leads to an accelerated multiobjective gradient method with an improvement convergence rate of the function values. We equivalently write \eqref{eq:AVD_Single_objective_a=3} as
\begin{align}
    \frac{3}{t}\dot{x}(t) + \proj_{C(x(t)) + \ddot{x}(t)}(0) = 0.
\label{eq:AVD_Single_objective_a=3_rewritten}
\end{align}
Using the same Ansatz as in Section 2 of \cite{Su2014}, we show that we can derive the differential equation \eqref{eq:AVD_Single_objective_a=3} from the scheme 
\begin{align}
    \frac{3}{k} (x^{k+1} - x^k) + \proj_{sC(y^k) + (x^{k+1} - 2x^k + x^{k-1})}(0) = 0,
    \label{eq:finite_diff_scheme_a_3}
\end{align}
with $y^k = x^k + \frac{k-1}{k+2}(x^k - x^{k-1})$. We divide \eqref{eq:finite_diff_scheme_a_3} by $\sqrt{s}$ to get
\begin{align}
    \frac{3}{k} \frac{x^{k+1} - x^k}{\sqrt{s}} + \proj_{\sqrt{s}C(y^k) + \frac{x^{k+1} - 2x^k + x^{k-1}}{\sqrt{s}}}(0) = 0.
    \label{eq:finite_diff_scheme_a_3_normalized}
\end{align}
We use the Ansatz $x_k \approx x(k\sqrt{s})$ for some smooth curve $x(t)$ defined for all $t \ge 0$. Write $k = \frac{t}{\sqrt{s}}$. When the step size $s$ goes to zero $X(t) \approx x_{\frac{t}{\sqrt{s}}} = x_k$ and  $X(t) \approx x_{\frac{t + \sqrt{s}}{\sqrt{s}}} = x_{k+1}$. Then, Taylor expansion gives
\begin{align}
    \frac{x^{k+1} - x^k}{\sqrt{s}} = \dot{x}(t) + \frac{1}{2}\ddot{x}(t)\sqrt{s} + o(\sqrt{s}),\quad \frac{x^{k} - x^{k-1}}{\sqrt{s}} = \dot{x}(t) - \frac{1}{2}\ddot{x}(t)\sqrt{s} + o(\sqrt{s}),
\label{eq:scheme_1_deriv}
\end{align}
and hence
\begin{align}
    \frac{x^{k+1} - 2x^k +x^{k-1}}{\sqrt{s}} = \ddot{x}(t)\sqrt{s} + o(\sqrt{s}).
\label{eq:scheme_2_deriv}
\end{align}
For all $i = 1,\dots, m$, we have $\sqrt{s}\nabla f_i(y^k) = \sqrt{s}\nabla f_i(x(t)) + o(\sqrt{s})$. Since the convex projection depends in a well-behaved manner on the convex set we project onto, we get
\begin{align}
    \proj_{\sqrt{s}C(y^k) + \frac{x^{k+1} - 2x^k + x^{k-1}}{\sqrt{s}}}(0) = \sqrt{s} \proj_{C(x(t)) + \ddot{x}(t)}(0) + o(\sqrt{s}).
\label{eq:projection_approx}
\end{align}
Combining \eqref{eq:scheme_1_deriv}, \eqref{eq:scheme_2_deriv} and \eqref{eq:projection_approx}, we get from \eqref{eq:finite_diff_scheme_a_3_normalized} 
\begin{align*}
    \frac{3\sqrt{s}}{t}\left( \dot{x}(t) + \frac{1}{2}\ddot{x}(t) \sqrt{s} + o(\sqrt{s}) \right) + \sqrt{s} \proj_{C(x(t)) + \ddot{x}(t)}(0) + o(\sqrt{s}) = 0.
\end{align*}
Comparing the coefficients of $\sqrt{s}$, we obtain
\begin{align*}
    \frac{3}{t}\dot{x}(t) + \proj_{C(x(t)) + \ddot{x}(t)}(0) = 0.
\end{align*}
We have shown that the differential equation \eqref{eq:AVD_Single_objective_a=3} can be derived from the scheme \eqref{eq:finite_diff_scheme_a_3}. Using Lemma \ref{lem:proj_1} on \eqref{eq:finite_diff_scheme_a_3}, we get that $x^{k+1}$ is uniquely defined as
\begin{align*}
    x^{k+1} & = - \left(\frac{k}{k + 3} \proj_{s C(y^k) - 2x^k + x^{k-1}}(-x^k) - \frac{3}{k + 3}x^k \right) \\
            & = x^k - \frac{k}{k + 3} \proj_{s C(y^k) - x^k - x^{k-1}}(0).
\end{align*}
The last term can be witten as
\begin{align*}
    x^k + \frac{k}{k + 3}(x^k - x^{k-1}) - \frac{sk}{k + 3} \sum_{i = 1}^m \theta_i^k \nabla f_i(y^k),
\end{align*}
where $\theta^k \in \R^m$ is a solution to the quadratic optimization problem 
\begin{align}
\begin{split}
    \min_{\theta \in \R^m} \, \left\lVert s \left(\sum_{i=1}^m \theta_i \nabla f_i(y^k) \right) - (x^k - x^{k-1}) \right\rVert^2\,\,\st \, \theta \ge 0 \,\,\text{and}\,\, \sum_{i = 1}^m \theta_i = 1.
\end{split}
\label{eq:QOP_a_3_first}
\end{align}
We want to drop the factor $\frac{k}{k + 3}$ in front of the term $\sum_{i = 1}^m \theta_i^k \nabla f_i(y^k)$ to get a method that more closely resembles \eqref{eq:Nesterov_method_single objective}. In addition, we perform a shift of the index $k$ to transform $\frac{k}{k + 3}$ into $\frac{k-1}{k + 2}$. The final method we define in this subsection can then be defined as follows. Let $x^0 = x^1 \in \H$ and $s > 0$. Define the scheme
\begin{align}
\begin{split}
    \left.
    \begin{array}{c}
    y^{k} = x^k + \frac{k-1}{k+2}(x^k - x^{k-1}),\\
    x^{k+1} = y^k - s\sum_{i = 1}^m \theta_i^k \nabla f_i(y^k)
    \end{array}
    \right\rbrace \,\,\text{ for }\,\,k \ge 1,
\end{split}
    \label{eq:Acc_MOO_desc_a=3}
\end{align}
where in each step $\theta^k \in \R^m$ is a solution to the quadratic optimization problem
\begin{align}
\begin{split}
    \min_{\theta \in \R^m} \, \left\lVert s \left(\sum_{i=1}^m \theta_i \nabla f_i(y^k) \right) - \frac{k - 1}{k + 2}(x^k - x^{k-1}) \right\rVert^2\,\, \st \, \theta \ge 0 \,\,\text{and}\,\, \sum_{i = 1}^m \theta_i = 1.
\end{split}
\label{eq:QOP_a_3}
\end{align}
The fact that we have to transform the quadratic optimization problem from \eqref{eq:QOP_a_3_first} into \eqref{eq:QOP_a_3} is an observation from the proof of Proposition \ref{prop:energy_acc}. The presented method is still asymptotically equivalent to the scheme defined by \eqref{eq:finite_diff_scheme_a_3}. We summarize the defined method in Algorithm \ref{algo:ACC_GRAD} for later references.
\begin{algorithm} 
	\caption{Accelerated multiobjective gradient method}
	\label{algo:ACC_GRAD}
	\begin{algorithmic}[1] 
		\Require Choose $x^0 = x^1 \in \H$, $0 < s \le \frac{1}{L}$ and set $k = 1$.
		\State Set $y^k = x^k + \frac{k-1}{k+2}(x^k - x^{k-1})$.
		\State Compute $\theta^k \in \R^m$ by solving
		\begin{align*}
		\begin{split}
            \min_{\theta \in \R^m} \, \left\lVert s \left(\sum_{i=1}^m \theta_i \nabla f_i(y^k) \right) - \frac{k - 1}{k + 2}(x^k - x^{k-1}) \right\rVert^2\,\, \st \, \theta \ge 0\,\, \text{and}\,\, \sum_{i = 1}^m \theta_i = 1.
        \end{split}
        \end{align*}
		\State Set $x^{k+1} = y^k - s\sum_{i=1}^m \theta_i^k \nabla f_i(y^k)$
		\If{stopping condition is true}
            \State Stop.
        \Else
            \State Update $k \leftarrow k+1$ and go to step 1.
        \EndIf
	\end{algorithmic} 
\end{algorithm}
\subsection{A Dissipative Property}
We start our investigations of Algorithm \ref{algo:ACC_GRAD} with an energy estimate analogous to Proposition \ref{prop:discrete_IMOG_energy} for the inertial method.
\begin{myprop}
Assume that the gradients $\nabla f_i$ of the objective functions are globally $L$-Lipschitz continuous for all $i = 1,\dots, m$ and further assume $sL \le 1$. Define for all $k \ge 1$ the energy sequence
\begin{align*}
    \E_{i,k} \coloneqq f_i(x^k) + \frac{1}{2s}\lVert x^{k} - x^{k-1}\rVert^2.
\end{align*}
For all $k \ge 1$, it holds that
\begin{align*}
    \E_{i,k+1} - \E_{i,k} \le - \frac{1}{2s}\frac{3}{k + 2}\lVert x^k - x^{k-1} \rVert^2.
\end{align*}
\label{prop:energy_acc}
\end{myprop}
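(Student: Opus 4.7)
The plan is to follow the same strategy as in Proposition \ref{prop:discrete_IMOG_energy}: exploit a variational inequality coming from the quadratic subproblem that defines $\theta^k$, combine it with the descent lemma and convexity at the extrapolated point $y^k$, and then telescope the energy. Two key identities will be used repeatedly, namely
\begin{align*}
    y^k - x^k = \frac{k-1}{k+2}(x^k - x^{k-1}), \qquad y^k - x^{k+1} = s\sum_{j=1}^m \theta_j^k \nabla f_j(y^k),
\end{align*}
so that the quantity minimized in \eqref{eq:QOP_a_3} is exactly $\lVert s\sum_j \theta_j \nabla f_j(y^k) - (y^k - x^k)\rVert^2$. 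This is precisely why the QOP was shifted from \eqref{eq:QOP_a_3_first} to \eqref{eq:QOP_a_3}; it makes $s \sum_j \theta_j^k \nabla f_j(y^k)$ the projection of $y^k - x^k$ onto $sC(y^k)$, which is the structural ingredient that will let the algebra collapse cleanly.

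First, I would write down the variational characterization of this projection: for every $i = 1, \dots, m$,
\begin{align*}
    \bigl\langle s\nabla f_i(y^k) - (y^k - x^{k+1}),\, (y^k - x^k) - (y^k - x^{k+1}) \bigr\rangle \le 0,
\end{align*}
which, after simplification of the second factor to $x^{k+1} - x^k$, becomes
\begin{align*}
    s\langle \nabla f_i(y^k),\, x^{k+1} - x^k \rangle \le \langle y^k - x^{k+1},\, x^{k+1} - x^k \rangle.
\end{align*}
Next, I would combine the descent lemma at $y^k$ (using $L$-Lipschitz continuity of $\nabla f_i$) with convexity of $f_i$ to get
\begin{align*}
    f_i(x^{k+1}) - f_i(x^k) \le \langle \nabla f_i(y^k), x^{k+1} - x^k \rangle + \tfrac{L}{2}\lVert x^{k+1} - y^k \rVert^2,
\end{align*}
and upgrade $L/2$ to $1/(2s)$ using $sL \le 1$.

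Then the bookkeeping step: substitute the variational inequality into the previous line and introduce the shorthand $a \coloneqq x^{k+1} - x^k$ and $b \coloneqq y^k - x^k = \frac{k-1}{k+2}(x^k - x^{k-1})$. One computes
\begin{align*}
    s\bigl(f_i(x^{k+1}) - f_i(x^k)\bigr) \le \langle b - a, a\rangle + \tfrac{1}{2}\lVert a - b\rVert^2 = \tfrac{1}{2}\lVert b\rVert^2 - \tfrac{1}{2}\lVert a\rVert^2,
\end{align*}
so that the cross terms cancel and, after adding $\frac{1}{2s}(\lVert a\rVert^2 - \lVert x^k - x^{k-1}\rVert^2)$ to form $\E_{i,k+1} - \E_{i,k}$,
\begin{align*}
    \E_{i,k+1} - \E_{i,k} \le \frac{1}{2s}\left(\left(\frac{k-1}{k+2}\right)^{\!2} - 1\right)\lVert x^k - x^{k-1}\rVert^2 = -\frac{1}{2s}\,\frac{3(2k+1)}{(k+2)^2}\,\lVert x^k - x^{k-1}\rVert^2.
\end{align*}
Finally, the bound $\frac{2k+1}{(k+2)^2} \ge \frac{1}{k+2}$, which holds whenever $k \ge 1$, converts this into the claimed estimate.

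The only delicate point I anticipate is checking that the variational inequality is being applied with the right signs and the right ``center'' (i.e.\ recognizing $y^k - x^k$, not $x^k - x^{k-1}$, as what is being projected onto $sC(y^k)$); this is exactly the observation referenced in Subsection \ref{subsec:intro_new_acc_imog'} about why \eqref{eq:QOP_a_3_first} had to be shifted to \eqref{eq:QOP_a_3}. Once that is in place the rest is routine scalar algebra, culminating in the elementary comparison between $\frac{3(2k+1)}{(k+2)^2}$ and $\frac{3}{k+2}$.
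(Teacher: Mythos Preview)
Your proof is correct and follows essentially the same strategy as the paper: the variational inequality for the projection defining $\theta^k$, combined with the descent lemma plus convexity at $y^k$. The only difference is bookkeeping: the paper expands $\langle x^{k+1}-x^k, x^k-x^{k-1}\rangle$ via polarization and then bounds $\lVert x^{k+1}-y^k\rVert^2$ by the convex-combination inequality $\lVert a-\lambda c\rVert^2 \le \lambda\lVert a-c\rVert^2 + (1-\lambda)\lVert a\rVert^2$, landing directly on the constant $\tfrac{3}{k+2}$, whereas you collapse $\langle b-a,a\rangle + \tfrac12\lVert a-b\rVert^2$ to $\tfrac12(\lVert b\rVert^2-\lVert a\rVert^2)$ and obtain the sharper $\tfrac{3(2k+1)}{(k+2)^2}$ before relaxing. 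Your route is slightly cleaner and loses nothing.
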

\begin{proof}
From the definition of $x^k$ and $y^k$ in \eqref{eq:Acc_MOO_desc_a=3} we get
\begin{align*}
    x^{k+1} - x^k + \proj_{s C(y^k) - \frac{k-1}{k + 2}(x^k - x^{k-1})}(0) &=0.
\end{align*}
Hence, for all $i = 1, \dots, m$ it holds that
\begin{align*}
    \langle x^{k+1} - x^k + s \nabla f_i(y^k) - \frac{k-1}{k + 2}(x^k - x^{k-1}), x^{k+1} - x^k \rangle \le 0,
\end{align*}
from which we follow
\begin{align*}
    &s \langle \nabla f_i(y^k), x^{k+1} - x^k \rangle \le - \lVert x^{k+1} - x^k \rVert^2 + \frac{k-1}{k + 2}\langle x^{k+1} - x^k, x^k - x^{k-1} \rangle \\
    =&-\frac{3}{k+2}\lVert x^{k+1} - x^k \rVert  - \frac{
    1}{2}\frac{k - 1}{k + 2} \lVert x^{k+1} - 2x^k + x^{k-1} \rVert^2 \\
    & + \frac{1}{2} \frac{k - 1}{k + 2} \left[ \lVert  x^{k} - x^{k-1}  \rVert - \lVert x^{k+1} - x^k \rVert \right].
\end{align*}
Writing out the definition of $y^k$, one can easily verify that
\begin{align*}
    \lVert x^{k+1} - y^k \rVert^2 \le \frac{k-1}{k + 2 } \lVert x^{k+1} - 2x^k + x^{k-1} \rVert^2 + \frac{3}{k + 2}\lVert x^{k+1} - x^k \rVert^2.
\end{align*}
Combining the inequalities above and using $sL \le 1$ we get
\begin{align*}
    & s(f_i(x^{k+1}) - f_i(x^k)) \le s \langle \nabla f_i(y^k), x^{k+1} - x^k \rangle + \frac{1}{2}\lVert x^{k+1} - y^k \rVert^2 \\
    \le& -\frac{1}{2}\frac{3}{k + 2} \lVert x^{k+1} - x^k \rVert^2 + \frac{1}{2} \frac{k-1}{k + 2} \left[ \lVert x^k - x^{k-1} \rVert - \lVert x^{k+1} - x^k\rVert\right]\\
    = & \frac{1}{2} \left[ \lVert x^k - x^{k-1} \rVert^2 - \lVert x^{k+1} - x^k\rVert^2\right] - \frac{1}{2}\frac{3}{k + 2}\lVert x^k - x^{k-1} \rVert^2,
\end{align*}
which completes the proof.
\end{proof}
\begin{mycorollary}
Let $(x^k)_{k \ge 0}$ be a sequence defined by \eqref{eq:Acc_MOO_desc_a=3}. Then, it holds that for all $k \ge 0$ and all $i = 1,\dots, m$
\begin{align*}
    f_i(x^k) \le f_i(x^0).
\end{align*}
\end{mycorollary}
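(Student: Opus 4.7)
The plan is to read off the corollary directly from Proposition \ref{prop:energy_acc}, which establishes monotone decrease of the energy sequence $\E_{i,k} = f_i(x^k) + \frac{1}{2s}\lVert x^k - x^{k-1}\rVert^2$. The only thing to verify is the starting value of this energy and then to drop the nonnegative velocity term on the right.

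Concretely, I would proceed in three short steps. First, note that the case $k=0$ is trivial since $f_i(x^0) \le f_i(x^0)$. Second, evaluate the energy at $k=1$: because the initialization in \eqref{eq:Acc_MOO_desc_a=3} prescribes $x^0 = x^1$, the velocity term vanishes and $\E_{i,1} = f_i(x^1) + \frac{1}{2s}\lVert x^1 - x^0\rVert^2 = f_i(x^0)$. Third, apply Proposition \ref{prop:energy_acc} telescopically: for every $k \ge 1$,
\begin{align*}
    \E_{i,k} - \E_{i,1} = \sum_{j=1}^{k-1} (\E_{i,j+1} - \E_{i,j}) \le -\sum_{j=1}^{k-1} \frac{1}{2s}\frac{3}{j+2}\lVert x^j - x^{j-1}\rVert^2 \le 0,
\end{align*}
so $\E_{i,k} \le \E_{i,1} = f_i(x^0)$. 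Dropping the nonnegative term $\frac{1}{2s}\lVert x^k - x^{k-1}\rVert^2$ from the definition of $\E_{i,k}$ yields $f_i(x^k) \le \E_{i,k} \le f_i(x^0)$, which is the claim.

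There is no real obstacle here; the corollary is essentially a bookkeeping consequence of the dissipative inequality already proved, together with the specific initialization $x^0 = x^1$ that makes $\E_{i,1}$ coincide with $f_i(x^0)$. The only point requiring any care is to remember that Proposition \ref{prop:energy_acc} is stated under $sL \le 1$, so that assumption is implicitly in force here as well.
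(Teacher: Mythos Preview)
Your proposal is correct and follows exactly the argument the paper intends: the corollary is stated without proof as an immediate consequence of Proposition~\ref{prop:energy_acc}, and your three steps (trivial base case, $\E_{i,1}=f_i(x^0)$ from the initialization $x^0=x^1$, telescoping the dissipative inequality and dropping the nonnegative velocity term) spell out precisely that consequence. Your remark that the hypothesis $sL\le 1$ is implicitly in force is also apt.
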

\subsection{Convergence of Function Values with Rate $\mathcal{O}(k^{-2})$}
The proof in this section relies on the proof by Fukuda, Tanabe and Yamashita \cite{Tanabe2022_2} for their accelerated gradient method and the proof of Attouch and Peypouquet \cite{Attouch2015_2} for the single objective case. The following definition is aligned with \cite{Tanabe2020} and the concept of merit functions that was introduced in \cite{Tanabe2020} and further utilized in \cite{Tanabe2022, Tanabe2022_2}. For $z \in \H$ define
\begin{align*}
    \sigma_k(z) \coloneqq \min_{i=1,\dots, m} f_i(x^k) - f_i(z).
\end{align*}
\begin{mylemma}
It holds that
\begin{align*}
    \sigma_{k+1}(z) &\le -\frac{1}{s}\langle x^{k+1} - y^k , y^k - z\rangle - \frac{1}{2s}\lVert x^{k+1} - y^k\rVert^2.
\end{align*}
\label{lem:sigma_k+1}
\end{mylemma}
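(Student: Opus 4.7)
The plan is to combine the standard descent lemma (available because each $\nabla f_i$ is $L$-Lipschitz and $sL\le 1$) with convexity of each $f_i$, then aggregate across $i$ using the weights $\theta^k$ produced by the quadratic subproblem \eqref{eq:QOP_a_3}, and finally replace $\sum_i \theta_i^k \nabla f_i(y^k)$ by its closed-form expression coming from the update rule \eqref{eq:Acc_MOO_desc_a=3}.

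Concretely, I would first apply the descent lemma at $y^k$ to each objective: $f_i(x^{k+1})\le f_i(y^k)+\langle \nabla f_i(y^k),x^{k+1}-y^k\rangle+\tfrac{L}{2}\lVert x^{k+1}-y^k\rVert^2$, and use $sL\le 1$ to replace $\tfrac{L}{2}$ by $\tfrac{1}{2s}$. Next, I would apply convexity of $f_i$ to get $f_i(y^k)-f_i(z)\le \langle \nabla f_i(y^k),y^k-z\rangle$. Adding these two inequalities yields, for every index $i$,
\begin{equation*}
f_i(x^{k+1})-f_i(z) \le \langle \nabla f_i(y^k),x^{k+1}-z\rangle+\tfrac{1}{2s}\lVert x^{k+1}-y^k\rVert^2.
\end{equation*}

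The key aggregation step is to take the convex combination of these $m$ inequalities with weights $\theta^k\in\Delta_m$. Since $\theta^k$ is a probability vector, the minimum over $i$ is no larger than this weighted average, so
\begin{equation*}
\sigma_{k+1}(z) \;\le\; \sum_{i=1}^m \theta_i^k\bigl(f_i(x^{k+1})-f_i(z)\bigr) \;\le\; \Bigl\langle \sum_{i=1}^m \theta_i^k \nabla f_i(y^k),\, x^{k+1}-z \Bigr\rangle + \tfrac{1}{2s}\lVert x^{k+1}-y^k\rVert^2.
\end{equation*}
Now the update rule \eqref{eq:Acc_MOO_desc_a=3} gives the identity $\sum_{i=1}^m \theta_i^k \nabla f_i(y^k)=-\tfrac{1}{s}(x^{k+1}-y^k)$, and splitting $x^{k+1}-z=(x^{k+1}-y^k)+(y^k-z)$ produces $-\tfrac{1}{s}\langle x^{k+1}-y^k,x^{k+1}-z\rangle=-\tfrac{1}{s}\lVert x^{k+1}-y^k\rVert^2-\tfrac{1}{s}\langle x^{k+1}-y^k,y^k-z\rangle$. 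Substituting this back and combining the two $\lVert x^{k+1}-y^k\rVert^2$ terms to $-\tfrac{1}{2s}\lVert x^{k+1}-y^k\rVert^2$ yields the stated bound.

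There is no serious obstacle here; the only thing to be careful about is justifying the bound $\sigma_{k+1}(z)\le \sum_i\theta_i^k(f_i(x^{k+1})-f_i(z))$, which holds simply because $\theta^k\in\Delta_m$ and a minimum is dominated by any convex average. Everything else is the textbook descent-plus-convexity calculation, made multiobjective by the one aggregation step using the Lagrange-type weights $\theta^k$ delivered by the subproblem \eqref{eq:QOP_a_3}.
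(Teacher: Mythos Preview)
Your proof is correct and follows essentially the same route as the paper: both combine the descent lemma (using $sL\le 1$) with convexity to bound each $f_i(x^{k+1})-f_i(z)$, then pass to the convex average with weights $\theta^k\in\Delta_m$ to dominate $\sigma_{k+1}(z)$, and finally substitute $\sum_i\theta_i^k\nabla f_i(y^k)=\tfrac{1}{s}(y^k-x^{k+1})$ from the update rule. The only cosmetic difference is that you first collapse the two inner products into $\langle\nabla f_i(y^k),x^{k+1}-z\rangle$ and then split $x^{k+1}-z$, whereas the paper keeps the two pieces $x^{k+1}-y^k$ and $y^k-z$ separate throughout.
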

\begin{proof}
The objective functions $f_i$ are convex with $L$-Lipschitz continuous gradients. Therefore, for all $i = 1,\dots, m$ it holds that
\begin{align}
\label{eq:lemma1_1}
\begin{split}
    & f_i(x^{k+1}) - f_i(z) \le f_i(x^{k+1}) - f_i(y^k) + f_i(y^k) - f_i(z)\\
    \le &\langle \nabla f_i(y^k), x^{k+1} - y^k \rangle + \frac{L}{2} \lVert x^{k+1} -y^k \rVert^2 + \langle \nabla f_i(y^k), y^k - z \rangle.
    \end{split}
\end{align}
The definition of $\sigma_k(z)$ gives
\begin{align}
\label{eq:lemma1_2}
    \sigma_{k+1}(z) = \min_{i=1,\dots, m} f_i(x^{k+1}) - f_i(z) \le \sum_{i=1}^m \theta_i^k \left( f_i(x^{k+1}) - f_i(z) \right).
\end{align}
Combining \eqref{eq:lemma1_1} and \eqref{eq:lemma1_2} and using $\sum_{i=1}^m \theta_i^k \nabla f_i(y^k) = \frac{1}{s}(y^k - x^{k+1})$ we get the desired inequality.
\end{proof}
We want to find a similar inequality for the expression $f_i(x^{k+1}) - f_i(x^k)$. To this end, we introduce the following lemma.
\begin{mylemma}
Define the optimization problem 
\begin{align}
\label{eq:primal_problem}
\begin{split}
    \min_{(v, \alpha) \in \H \times \R} &\Phi(v, \alpha) \coloneqq \frac{1}{2}\lVert sv + (y^k - x^k) \rVert^2 + \alpha\\
    \st\,\, &g_i(v,\alpha) \coloneqq \langle s\nabla f_i(y^k) - (y^k - x^k), sv + (y^k - x^k) \rangle - \alpha \le 0.
\end{split}
\end{align}
Then, it holds that the dual problem to this problem is the quadratic problem \eqref{eq:QOP_a_3}. An optimal solution $\theta^*$ to \eqref{eq:QOP_a_3} satisfies
\begin{align*}
    \langle s\sum_{i=1}^m \theta_i^* \nabla f_i(y^k), x^{k+1} - x^k\rangle = \max_{i=1,\dots,m} \langle s\nabla f_i(y^k), x^{k+1} - x^k\rangle.
\end{align*}
\label{lem:primal_dual_comp}
\end{mylemma}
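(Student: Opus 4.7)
The plan is to compute the Lagrange dual of \eqref{eq:primal_problem} explicitly and then extract the maximum identity from the KKT conditions. The primal is a convex QP with affine inequality constraints and trivially satisfies Slater's condition (take $\alpha$ arbitrarily large, since each $g_i$ is affine in $\alpha$ with coefficient $-1$), so strong duality holds and KKT characterizes optimal primal-dual pairs.

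First I would form the Lagrangian
\[
L(v,\alpha,\theta) = \tfrac{1}{2}\lVert sv+(y^k-x^k)\rVert^2 + \alpha + \sum_{i=1}^m \theta_i\bigl(\langle s\nabla f_i(y^k)-(y^k-x^k),\, sv+(y^k-x^k)\rangle - \alpha\bigr),
\]
with $\theta_i\ge 0$. Setting $\partial_\alpha L=0$ forces $\sum_i\theta_i=1$ (otherwise the infimum in $\alpha$ is $-\infty$). Under this constraint the $\sum_i \theta_i(y^k-x^k)$ contribution cancels against $(y^k-x^k)$, and $\partial_v L=0$ reduces to $v^* = -\sum_i \theta_i \nabla f_i(y^k)$. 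Substituting back, the key observation is that $sv^* + (y^k-x^k) = -\bigl(s\sum_i \theta_i\nabla f_i(y^k) - (y^k-x^k)\bigr)$, so the quadratic and cross terms combine into a dual objective of $-\tfrac{1}{2}\lVert s\sum_i\theta_i\nabla f_i(y^k) - (y^k-x^k)\rVert^2$. Since $y^k-x^k = \frac{k-1}{k+2}(x^k-x^{k-1})$, maximizing this over the simplex is precisely \eqref{eq:QOP_a_3}, identifying $\theta^k$ as a dual optimum.

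For the second claim, let $\theta^*$ be optimal for \eqref{eq:QOP_a_3} (hence for the dual) and $(v^*, \alpha^*)$ the corresponding primal optimum. From $v^* = -\sum_i \theta_i^* \nabla f_i(y^k)$ together with the Algorithm~\ref{algo:ACC_GRAD} update $x^{k+1} = y^k - s\sum_i \theta_i^* \nabla f_i(y^k)$, one immediately sees that $sv^* + (y^k - x^k) = (x^{k+1}-y^k) + (y^k-x^k) = x^{k+1}-x^k$. Complementary slackness then gives, for every $i$ with $\theta_i^*>0$, the equality $\langle s\nabla f_i(y^k), x^{k+1}-x^k\rangle = \alpha^* + \langle y^k-x^k, x^{k+1}-x^k\rangle$, while primal feasibility gives the same expression with $\le$ for every $i$. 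Taking the convex combination with weights $\theta_i^*$ (which sum to $1$) produces $\langle s\sum_i \theta_i^* \nabla f_i(y^k), x^{k+1}-x^k\rangle = \alpha^* + \langle y^k-x^k, x^{k+1}-x^k\rangle$, which therefore equals the componentwise maximum, yielding the claimed identity. The calculation is essentially mechanical Lagrangian duality; the only real subtlety is keeping track of signs and using $\sum_i\theta_i^*=1$ repeatedly when substituting $v^*$ back, so I do not expect any genuine obstacle here.
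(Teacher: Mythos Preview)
Your proposal is correct and follows essentially the same route as the paper: form the Lagrangian, use $\partial_\alpha L=0$ to force $\theta\in\Delta_m$, use $\partial_v L=0$ to get $v^*=-\sum_i\theta_i^*\nabla f_i(y^k)$ and hence $sv^*+(y^k-x^k)=x^{k+1}-x^k$, then read off the max identity from complementary slackness and primal feasibility. The paper invokes convexity with linear constraints (appealing to Luenberger for the infinite-dimensional case) rather than Slater explicitly, and leaves the dual computation as ``straightforward,'' but the argument is the same.
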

\begin{proof}
Since $\mathcal{H}$ is potentially infinite-dimensional, we need duality statements for infinite-dimensional constrained optimization problems. The statements we use in this proof can be found in Sections 8.3 to 8.6 of \cite{Luenberger1997}. Since the optimization problem \eqref{eq:primal_problem} has a fairly simple structure, we will not recite every result we use. The duality between \eqref{eq:primal_problem} and \eqref{eq:QOP_a_3} follows from a straightforward computation. Since the objective function $\Phi(v, \alpha)$ of \eqref{eq:primal_problem} is convex and all constraints $g_i(v, \alpha)$ are linear, strong duality holds. Hence a KKT point $((v^*, \alpha^*), \theta^*) \in (\H \times \R) \times \R^m$ of problem \eqref{eq:primal_problem} yields a solution to \eqref{eq:QOP_a_3}. From the KKT conditions for \eqref{eq:primal_problem} we get that
\begin{align*}
    v^* = -s\sum_{i=1}^m \theta_i^* \nabla f_i(y^k).
\end{align*}
For all $i = 1,\dots, m$ it holds that $g_i(v,\alpha) \le 0$ and hence
\begin{align*}
    \langle s \nabla f_i(y^k) - (y^k - x^k), sv + (y^k - x^k)\rangle \le \alpha.
\end{align*}
By the complementarity of $\theta_i^*$ and $g_i(v^*, \alpha^*)$ we get
\begin{align*}
    &\langle s \sum_{i=1}^m \theta_i^* \nabla f_i(y^k) - (y^k - x^k), sv^* + (y^k - x^k) \rangle = \alpha^*\\
    = &\max_{i=1,\dots, m} \langle s\nabla f_i(y^k) - (y^k - x^k), sv^* + (y^k - x^k) \rangle.
\end{align*}
The second equality above follows from the fact that  $\theta_j^* > 0$ holds for at least one $j \in \{1,\dots, m\}$ as a consequence of the dual feasibility.\\
Using $v^* = -\sum_{i=1}^m \theta_i^* \nabla f_i(y^k)$, we get $sv^* = x^{k+1} - y^k$ and therefore \begin{align*}
    &\langle s \sum_{i=1}^m \theta_i^* \nabla f_i(y^k) - (y^k - x^k), x^{k+1} - x^k \rangle\\
    = &\max_{i=1,\dots, m} \langle s\nabla f_i(y^k) - (y^k - x^k), x^{k+1} - x^k) \rangle.
\end{align*}
\end{proof}
\begin{mylemma}
    \begin{align*}
        \sigma_{k+1}(z)- \sigma_k(z) &\le -\frac{1}{s}\langle x^{k+1} - y^k , y^k - x^k\rangle - \frac{1}{2s} \lVert x^{k+1} - y^k\rVert^2.
    \end{align*}
\label{lem:sigma_k+1-sigma_k}
\end{mylemma}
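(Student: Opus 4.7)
The plan is to reduce the left-hand side to an index-free quantity, then apply a descent estimate, and finally invoke the dual characterization from Lemma \ref{lem:primal_dual_comp}. The structure mirrors the proof of Lemma \ref{lem:sigma_k+1}, except that the two minima defining $\sigma_{k+1}(z)$ and $\sigma_k(z)$ now have to be coupled before any descent inequality becomes useful.

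First I would pick an index $j^\ast \in \{1,\dots,m\}$ attaining the minimum that defines $\sigma_k(z)$, so that $\sigma_k(z) = f_{j^\ast}(x^k) - f_{j^\ast}(z)$. Since $\sigma_{k+1}(z) \le f_{j^\ast}(x^{k+1}) - f_{j^\ast}(z)$ by definition of the minimum, the $z$-terms cancel telescopically and I obtain
\begin{align*}
\sigma_{k+1}(z) - \sigma_k(z) \le f_{j^\ast}(x^{k+1}) - f_{j^\ast}(x^k).
\end{align*}
This is the key reduction: it removes $z$ from the right-hand side and reduces matters to controlling a single (albeit a priori unknown) component $f_{j^\ast}$. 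I expect this to be the main obstacle, since replacing a difference of minima by the evaluation at a common index is the only place where the combinatorial structure of $\sigma_k(z)$ has to be handled; everything that follows is linear-algebraic.

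From there the argument proceeds along familiar lines. For the selected index $j^\ast$ I would combine the descent inequality at $y^k$ coming from $L$-Lipschitz continuity of $\nabla f_{j^\ast}$ with the convexity inequality $f_{j^\ast}(x^k) \ge f_{j^\ast}(y^k) + \langle \nabla f_{j^\ast}(y^k), x^k - y^k\rangle$ to get
\begin{align*}
f_{j^\ast}(x^{k+1}) - f_{j^\ast}(x^k) \le \langle \nabla f_{j^\ast}(y^k), x^{k+1} - x^k\rangle + \tfrac{L}{2}\lVert x^{k+1} - y^k\rVert^2.
\end{align*}
Bounding the inner product by the maximum over all indices and invoking Lemma \ref{lem:primal_dual_comp} together with the update rule $s\sum_i \theta_i^k \nabla f_i(y^k) = y^k - x^{k+1}$, this maximum equals $\tfrac{1}{s}\langle y^k - x^{k+1}, x^{k+1} - x^k\rangle$.

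Finally, I would split $x^{k+1} - x^k = (x^{k+1} - y^k) + (y^k - x^k)$ to rewrite
\begin{align*}
\langle y^k - x^{k+1}, x^{k+1} - x^k\rangle = -\lVert x^{k+1} - y^k\rVert^2 - \langle x^{k+1} - y^k, y^k - x^k\rangle,
\end{align*}
and use the standing assumption $sL \le 1$ to absorb $\tfrac{L}{2}\lVert x^{k+1} - y^k\rVert^2 \le \tfrac{1}{2s}\lVert x^{k+1} - y^k\rVert^2$ against the residual $-\tfrac{1}{s}\lVert x^{k+1} - y^k\rVert^2$. Collecting terms then yields precisely the claimed estimate.
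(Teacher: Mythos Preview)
Your proof is correct and follows essentially the same route as the paper. The only cosmetic difference is in the first step: you select the index $j^\ast$ realizing $\sigma_k(z)$ and bound $\sigma_{k+1}(z)-\sigma_k(z)\le f_{j^\ast}(x^{k+1})-f_{j^\ast}(x^k)$, whereas the paper invokes the generic inequality $\min_i a_i-\min_i b_i\le\max_i(a_i-b_i)$; since you subsequently pass to the maximum over $i$ anyway to apply Lemma~\ref{lem:primal_dual_comp}, the two arguments coincide from that point on.
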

\begin{proof}
For all $a, b \in \R^m$ it holds that 
\begin{align*}
\left(\min_{i=1, \dots, m} a_i\right) - \left(\min_{i=1,\dots, m} b_i \right) \le \max_{i=1,\dots, m} \left(a_i - b_i\right)    
\end{align*}
and therefore for all $z \in \H$
\begin{align*}
    \sigma_{k+1}(z)- \sigma_k(z) \le \max_{i=1,\dots, m} \left( f_i(x^{k+1}) - f_i(x^k) \right).
\end{align*}
Using that the objective functions $f_i$ are convex with $L$-Lipschitz continuous gradients and the fact that $sL \le 1$, we can bound this expression by
\begin{align*}
    \le \max_{i=1,\dots,m} \left(\langle \nabla f_i(y^k), x^{k+1} - x^k \rangle + \frac{1}{2s} \lVert x^{k+1} - y^k \rVert^2\right).
\end{align*}
Now we use Lemma \ref{lem:primal_dual_comp} and get the equality
\begin{align*}
    = \sum_{i=1}^m \theta_i^k \langle \nabla f_i(y^k), x^{k+1} - x^k \rangle + \frac{1}{2s} \lVert x^{k+1} - y^k \rVert^2.
\end{align*}
From here, we continue by using the definitions of $x^k$ and $y^k$ from \eqref{eq:Acc_MOO_desc_a=3} to get
\begin{align*}
    =& \frac{1}{s} \langle y^k - x^{k+1}, x^{k+1} - x^k \rangle + \frac{1}{2s} \lVert x^{k+1} - y^k \rVert^2\\
    =& -\frac{1}{s} \langle x^{k+1} - y^{k}, y^{k} - x^k \rangle - \frac{1}{2s} \lVert x^{k+1} - y^k \rVert^2.\\
\end{align*}
\end{proof}
\begin{theorem}
The sequence $(x^k)_{k \ge 0}$ defined by \eqref{eq:Acc_MOO_desc_a=3} satisfies
\begin{align*}
    \sigma_{k}(z) \le \frac{2 \left( \lVert x^1- z \rVert^2 + \lVert x^2 - z \rVert^2 \right)}{s (k+1)^2 }. 
\end{align*}
\label{thm:convergence_sigma_k_z}
\end{theorem}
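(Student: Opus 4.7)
The plan is to combine the two local inequalities in Lemmas \ref{lem:sigma_k+1} and \ref{lem:sigma_k+1-sigma_k} into a single Lyapunov-type recursion, and then telescope. This is the discrete counterpart of the energy-function analysis of Attouch and Peypouquet for the single-objective Nesterov method, adapted to the multiobjective merit quantity $\sigma_k(z)$, and parallels the argument of Tanabe, Fukuda and Yamashita in the proximal setting.

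Concretely, I would first form the convex combination $\lambda_k \cdot (\text{Lem.}~\ref{lem:sigma_k+1}) + (1-\lambda_k)\cdot(\text{Lem.}~\ref{lem:sigma_k+1-sigma_k})$ with $\lambda_k = 3/(k+2)$, so that $1-\lambda_k = (k-1)/(k+2)$ matches the inertial weight appearing in $y^k$. The combined inequality reads
\begin{align*}
\sigma_{k+1}(z) - \tfrac{k-1}{k+2}\,\sigma_k(z) \le -\tfrac{1}{s}\bigl\langle x^{k+1}-y^k,\; y^k - \tfrac{3}{k+2}z - \tfrac{k-1}{k+2}x^k \bigr\rangle - \tfrac{1}{2s}\|x^{k+1}-y^k\|^2.
\end{align*}
Applying the polarization identity $-\langle a,b\rangle - \tfrac{1}{2}\|a\|^2 = \tfrac{1}{2}\|b\|^2 - \tfrac{1}{2}\|a+b\|^2$ with $a = x^{k+1}-y^k$ and $b = y^k - \tfrac{3}{k+2}z - \tfrac{k-1}{k+2}x^k$, and multiplying by $(k+2)^2$, the inertial term collapses into a difference of squared norms of vectors of the form $U_k := (k+2)x^k - (k-1)x^{k-1} - 3z$ and $V_k := (k+2)x^{k+1} - (k-1)x^k - 3z$, producing
\begin{align*}
(k+2)^2\,\sigma_{k+1}(z) + \tfrac{1}{2s}\|V_k\|^2 \le (k+2)(k-1)\,\sigma_k(z) + \tfrac{1}{2s}\|U_k\|^2.
\end{align*}

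To telescope I would exploit $(k+2)(k-1) \le (k+1)^2$ together with the easily verified identity $U_{k+1} = V_k + (x^{k+1}-x^k)$, which relates consecutive quadratic terms. This reduces the argument to a recursion of the form $\mathcal{L}_{k+1} \le \mathcal{L}_k + r_k$ for the Lyapunov candidate $\mathcal{L}_k := s(k+1)^2 \sigma_k(z) + \tfrac{1}{2}\|V_{k-1}\|^2$, with a remainder $r_k$ controlled by Cauchy--Schwarz and by the summability of $\|x^{k+1}-x^k\|^2$ that follows from the energy estimate of Proposition \ref{prop:energy_acc}. Telescoping from $k=1$ and using the initialization $x^0 = x^1$ to evaluate $\mathcal{L}_1$ (which involves exactly $\|U_1\|^2 = 9\|x^1-z\|^2$ and $\|V_1\|^2 = 9\|x^2-z\|^2$) yields constants proportional to $\|x^1-z\|^2 + \|x^2-z\|^2$, and dividing by $s(k+1)^2$ gives the stated bound.

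The delicate step is controlling the remainder $r_k$: because $\alpha = 3$ is the critical damping parameter, one does not obtain a strictly decreasing Lyapunov function --- as one would for $\alpha > 3$ --- but only a near-telescoping up to error terms, and avoiding the loss of the $(k+1)^{-2}$ rate requires careful absorbing of these errors against Proposition \ref{prop:energy_acc}. This is the same subtlety that forces the continuous-time analysis at $\alpha=3$ to be treated separately from the over-damped regime $\alpha>3$; the two-term right-hand side $\|x^1-z\|^2 + \|x^2-z\|^2$ in the theorem reflects precisely the two initial data points needed to launch the recursion.
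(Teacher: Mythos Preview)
Your overall strategy---a $k$-dependent convex combination of Lemmas~\ref{lem:sigma_k+1} and~\ref{lem:sigma_k+1-sigma_k}, polarization to turn the right-hand side into a difference of squares, then telescoping---is exactly the paper's route. The gap is the choice of weight. You take $\lambda_k = 3/(k+2)$, presumably guided by the damping coefficient $\alpha=3$, but the weight that makes the argument close is $\lambda_k = 2/(k+2)$ (so $1-\lambda_k = k/(k+2)$). With that choice, setting $z^k := x^k + \tfrac{k-1}{2}(x^k-x^{k-1})$ one has the \emph{exact} relation $z^{k+1} = z^k + \tfrac{k+2}{2}(x^{k+1}-y^k)$, i.e.\ $U_{k+1}=V_k$ in your notation, and the quadratic term telescopes with no remainder. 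The recursion becomes $(k+2)^2\sigma_{k+1}(z) + \tfrac{2}{s}\|z^{k+1}-z\|^2 \le (k+1)^2\sigma_k(z) + \tfrac{2}{s}\|z^k-z\|^2$, and summing gives the bound directly; Proposition~\ref{prop:energy_acc} is not used at all. (In the continuous analysis of $\ddot{x}+\tfrac{\alpha}{t}\dot{x}+\nabla f(x)=0$ the Lyapunov scaling involves $\alpha-1$, not $\alpha$; here $\alpha-1=2$.)

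With your weight $3/(k+2)$ one only gets $U_{k+1}=V_k+(x^{k+1}-x^k)$, and the proposed repair via Proposition~\ref{prop:energy_acc} does not close. That proposition yields only $\sum_k \tfrac{1}{k+2}\|x^k-x^{k-1}\|^2<\infty$ (and even that needs a lower bound on the energies), not the summability of $\|x^{k+1}-x^k\|^2$ you invoke. Worse, the cross term $\langle V_k,\,x^{k+1}-x^k\rangle$ is, after Cauchy--Schwarz, of order $k\|x^{k+1}-x^k\|$ since $\|V_k\|$ grows linearly in $k$, and no available estimate controls $\sum_k k\|x^{k+1}-x^k\|$. Even if the remainders were summable, the resulting constant would depend on the energy data from Proposition~\ref{prop:energy_acc} rather than only on $\|x^1-z\|^2+\|x^2-z\|^2$. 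So the ``$\alpha=3$ is critical, hence error terms are unavoidable'' narrative is misleading here: with the correct weight the discrete Lyapunov sequence is genuinely nonincreasing, and the $\|x^2-z\|^2$ contribution in the statement arises from bounding the initial quantity $4\sigma_1(z)$, not from accumulated remainders.
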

\begin{proof}
Lemma \ref{lem:sigma_k+1} and Lemma \ref{lem:sigma_k+1-sigma_k} state 
\begin{align*}
        \sigma_{k+1}(z) &\le -\frac{1}{s}\langle x^{k+1} - y^k , y^k - z\rangle - \frac{1}{2s}\lVert x^{k+1} - y^k\rVert^2 \,\,\text{  and  }\\
        \sigma_{k+1}(z)- \sigma_k(z) &\le -\frac{1}{s}\langle x^{k+1} - y^k , y^k - x^k\rangle - \frac{1}{2s} \lVert x^{k+1} - y^k\rVert^2.
    \end{align*}
Taking a convex combination of the last inequalities with weights $\frac{2}{k + 2}$ and $\frac{k}{k + 2}$ yields
\begin{align}
\begin{split}
    & \sigma_{k+1}(z) - \frac{k}{k + 2}\sigma_k(z) \\
    \le & -\frac{1}{s} \langle x^{k+1} - y^k , y^k -\frac{k}{k + 2}x^k - \frac{2}{k + 2}z \rangle - \frac{1}{2s} \lVert x^{k+1} - y^k \rVert^2\\
    = & \frac{1}{s} \langle x^{k+1} - y^k , \frac{k}{k + 2}(x^k - y^k) + \frac{2}{k + 2}(z - y^k) \rangle - \frac{1}{2s} \lVert x^{k+1} - y^k \rVert^2.
\end{split}
\label{eq:sigam_k+1_z_ineq}
\end{align}
Define
\begin{align}
    z^k \coloneqq \frac{k + 2}{2}y^k - \frac{k}{2}x^k = x^k + \frac{k-1}{2}(x^k - x^{k-1}),
    \label{eq:def_z^k}
\end{align}
and notice that
\begin{align}
\label{eq:z^k_identity}
    \frac{k}{k + 2}(y^k - x^k) + \frac{2}{k + 2}(y^k - z) = \frac{2}{k + 2}(z^k - z).
\end{align}
Using the identity \eqref{eq:z^k_identity} in \eqref{eq:sigam_k+1_z_ineq} we get
\begin{align}
    \sigma_{k+1}(z) \le \frac{k}{k + 2} \sigma_k(z) - \frac{2}{s(k + 2)}\langle x^{k+1} - y^k, z^k - z \rangle - \frac{1}{2s}\lVert x^{k+1} - y^k \rVert^2.
    \label{eq:sigma_k+1_inequality}
\end{align}
From the definition of $z^k$ in \eqref{eq:def_z^k} one can see that
\begin{align*}
    z^{k+1} = z^k + \frac{k+2}{2}(x^{k+1} - y^k).
\end{align*}
Using this identity we can simply compute the squared norm of $\lVert z^{k+1} - z \rVert^2$ as
\begin{align*}
    \lVert z^{k+1} - z \rVert^2 = \lVert z^k - z \rVert^2 + (k+2)\langle z^k - z, x^{k+1} - y^k \rangle + \left( \frac{k+2}{2} \right)^2 \lVert x^{k+1} - y^k \rVert^2.
\end{align*}
Rearranging this identity and multiplying with $\frac{2}{s(k+2)^2}$ yields
\begin{align}
\begin{split}
    &\frac{2}{s(k + 2)^2} \left(\lVert z^{k} - z \rVert^2 - \lVert z^{k+1} - z \rVert^2 \right)\\
    = &-\frac{2}{s(k + 2)}\langle z^k - z, x^{k+1} - y^k \rangle - \frac{1}{2s} \lVert x^{k+1} - y^k \rVert^2.
\end{split}
\label{eq:identity_norm_z^k_z}
\end{align}
Combining \eqref{eq:sigma_k+1_inequality} and \eqref{eq:identity_norm_z^k_z}, in total we get
\begin{align*}
    \sigma_{k+1}(z) \le \frac{k}{k + 2} \sigma_k(z) + \frac{4}{2s(k + 2)^2} \left(\lVert z^{k} - z \rVert^2 - \lVert z^{k+1} - z \rVert^2 \right).
\end{align*}
Multiplying both sides with $(k+2)^2$ then yields
\begin{align*}
    (k+2)^2\sigma_{k+1}(z) \le k(k+2) \sigma_k(z) + \frac{2}{s} \left(\lVert z^{k} - z \rVert^2 - \lVert z^{k+1} - z \rVert^2 \right).
\end{align*}
Using $k(k+2) \le (k+1)^2$ we get
\begin{align*}
    (k+2)^2\sigma_{k+1}(z) - (k+1)^2 \sigma_k(z) \le \frac{2}{s} \left(\lVert z^{k} - z \rVert^2 - \lVert z^{k+1} - z \rVert^2 \right).
\end{align*}
Summing this inequality from $k = 1, \dots, K$, we get for all $z \in \H$
\begin{align*}
    (K+2)^2\sigma_{K+1}(z) \le \frac{2}{s} \lVert x^1 - z \rVert^2 + 4 \sigma_1(z).
\end{align*}
Similar computations to Lemma \ref{lem:sigma_k+1} yield
\begin{align*}
    \sigma_1(z) \le \frac{1}{2s}\lVert x^2 - z \rVert^2 - \frac{1}{2s} \lVert x^2 - x^1 \rVert^2.
\end{align*}
Then, for all $k \ge 1$, we obtain
\begin{align*}
    \sigma_{k}(z) \le \frac{2 \left( \lVert x^1- z \rVert^2 + \lVert x^2 - z \rVert^2 \right)}{s (k+1)^2 }.
\end{align*}
\end{proof}
The theorem above is not straight forward to interpret since we only get convergence of order $\mathcal{O}(k^{-2})$ for $\min_{i=1,\dots, m} f_i(x^k) - f_i(z)$. This on it's own does not state that the vector $f(x^k) = \left( f_1(x^k), \dots ,f_m(x^k) \right)$ converges to an element of the Pareto front. However we can refine the statement of Theorem \ref{thm:convergence_sigma_k_z} in the following way to get a stronger convergence statement under a weak additional assumption.
\begin{theorem}
Assume in addition to the assumption in Theorem \ref{thm:convergence_sigma_k_z} that for all $x \in \mathcal{L}(f(x_0))$ there exists an $x^* \in \mathcal{L}^* \coloneqq P_w \cap \mathcal{L}(f(x_0))$ with $f(x^*) \le f(x)$ and
\begin{align}
    \sup_{f^* \in f(\mathcal{L}^*)} \inf_{x \in f^{-1}(\lbrace f^* \rbrace)} \lVert x- x^0 \rVert < + \infty.
\label{eq:assumption_par_front}
\end{align}
Then, there exists $R \ge 0$ with
\begin{align*}
    \sup_{z \in \H} \sigma_k(z) \le \frac{R}{(k+1)^2}.
\end{align*}
\label{thm:convergence_u_0}
\end{theorem}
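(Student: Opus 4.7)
The plan is to leverage Theorem \ref{thm:convergence_sigma_k_z}, which already delivers an $\mathcal{O}(k^{-2})$ rate but with a $z$-dependent constant $2(\lVert x^1-z\rVert^2+\lVert x^2-z\rVert^2)/s$ that blows up as $\lVert z\rVert\to\infty$. The key observation is that the supremum defining $u_0$ is effectively attained on a bounded subset of $\H$, so one only needs the bound from Theorem \ref{thm:convergence_sigma_k_z} at points lying close to $x^0$. The additional assumption \eqref{eq:assumption_par_front} is designed precisely to quantify this.

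The first step is a case distinction for arbitrary $z\in\H$. If there exists an index $i$ with $f_i(z)\ge f_i(x^k)$, then trivially $\sigma_k(z)=\min_j(f_j(x^k)-f_j(z))\le 0$, so such $z$ are harmless. Otherwise $f_i(z)<f_i(x^k)\le f_i(x^0)$ for all $i$ (the second inequality comes from the corollary following Proposition \ref{prop:energy_acc}), so $z\in\mathcal{L}(f(x^0))$ and the hypothesis provides some $x^*\in\mathcal{L}^*$ with $f(x^*)\le f(z)$. Since $\sigma_k$ is monotone in its argument in the sense that $f(w')\le f(w)$ implies $\sigma_k(w)\le\sigma_k(w')$, I would deduce $\sigma_k(z)\le\sigma_k(x^*)$.

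Next, since $\sigma_k(w)$ depends on $w$ only through $f(w)$, I can replace $x^*$ by any preimage of $f(x^*)$ under $f$. For an arbitrary $\varepsilon>0$, condition \eqref{eq:assumption_par_front} supplies $x^{**}\in f^{-1}(\{f(x^*)\})$ with $\lVert x^{**}-x^0\rVert\le R'+\varepsilon$, where $R'$ is the finite supremum in the hypothesis. Then $\sigma_k(z)\le\sigma_k(x^*)=\sigma_k(x^{**})$, and applying Theorem \ref{thm:convergence_sigma_k_z} at $x^{**}$ yields
\begin{align*}
\sigma_k(z)\le\frac{2\bigl(\lVert x^1-x^{**}\rVert^2+\lVert x^2-x^{**}\rVert^2\bigr)}{s(k+1)^2}.
\end{align*}
Using $x^0=x^1$ gives $\lVert x^1-x^{**}\rVert\le R'+\varepsilon$, while the triangle inequality combined with the explicit first-step formula $x^2=x^1-s\sum_i\theta^1_i\nabla f_i(x^1)$ (which has norm bounded by $s\max_i\lVert\nabla f_i(x^0)\rVert$) gives $\lVert x^2-x^{**}\rVert\le s\max_i\lVert\nabla f_i(x^0)\rVert+R'+\varepsilon$. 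Letting $\varepsilon\to 0$ produces a constant
\begin{align*}
R \coloneqq \frac{2}{s}\Bigl(R'^2+\bigl(s\max_{i}\lVert\nabla f_i(x^0)\rVert+R'\bigr)^2\Bigr)
\end{align*}
that is independent of $z$, and taking the supremum over $z\in\H$ concludes the proof.

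The main obstacle is conceptual rather than computational: recognizing that the bound of Theorem \ref{thm:convergence_sigma_k_z} only needs to be evaluated at a cleverly chosen representative in the preimage $f^{-1}(\{f(x^*)\})$, and justifying that $\sigma_k$ is insensitive to the choice of representative. The hypothesis \eqref{eq:assumption_par_front} is exactly what makes the set of "relevant" representatives bounded uniformly over the weak Pareto front intersected with the initial level set; without such a condition the supremum could indeed be infinite despite the pointwise $\mathcal{O}(k^{-2})$ decay.
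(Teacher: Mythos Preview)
Your proof is correct and follows essentially the same route as the paper: both reduce the supremum over all $z\in\H$ to points in $\mathcal{L}^*$ (using the domination hypothesis together with $f_i(x^k)\le f_i(x^0)$), then exploit that $\sigma_k$ depends on its argument only through the function values to pass to representatives whose distance to $x^0$ is controlled by \eqref{eq:assumption_par_front}. The paper packages this as a chain of sup--inf identities culminating in $\sup_{z\in\H}\sigma_k(z)=\sup_{f^*\in f(\mathcal{L}^*)}\inf_{z\in f^{-1}(f^*)}\sigma_k(z)$, whereas you unfold the same logic via an explicit case distinction and an $\varepsilon$-argument, which has the side benefit of producing a concrete formula for the constant $R$.
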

\begin{proof}
Theorem \ref{thm:convergence_sigma_k_z} gives for all $z \in \H$
\begin{align*}
    \sigma_{k}(z) \le \frac{2 \left( \lVert x^1- z \rVert^2 + \lVert x^2 - z \rVert^2 \right)}{s (k+1)^2 }. 
\end{align*}
Taking a supremum over this inequality we get
\begin{align*}
    \sup_{f^* \in f(\mathcal{L^*})} \inf_{z \in f^{-1}(f^*)}\sigma_{k}(z) \le \sup_{f^* \in f(\mathcal{L^*})} \inf_{z \in f^{-1}(f^*)}\frac{2 \left( \lVert x^1- z \rVert^2 + \lVert x^2 - z \rVert^2 \right)}{s (k+1)^2}.
\end{align*}
Since $x^1, x^2 \in \mathcal{L}(f(x^0))$ assumption \eqref{eq:assumption_par_front} yields
\begin{align*}
    \sup_{f^* \in f(\mathcal{L^*})} \inf_{z \in f^{-1}(f^*)}\frac{2 \left( \lVert x^1- z \rVert^2 + \lVert x^2 - z \rVert^2 \right)}{s (k+1)^2} \le \frac{4R}{s(k+1)^2},
\end{align*}
with 
\begin{align}
    \label{eq:definition_R}
    R = \max_{j= 1,2} \left\lbrace \sup_{f^* \in f(\mathcal{L^*})} \inf_{z \in f^{-1}(f^*)} \lVert x^j - z \rVert^2 \right\rbrace.
\end{align}
It remains to show that
\begin{align*}
    \sup_{z \in \H} \sigma_k(z) = \sup_{f^* \in f(\mathcal{L^*})} \inf_{z \in f^{-1}(f^*)}\sigma_{k}(z).
\end{align*}
Writing out the definition of $\sigma_k(z)$, we get
\begin{align*}
    & \sup_{f^* \in f(\mathcal{L^*})} \inf_{z \in f^{-1}(f^*)}\sigma_{k}(z) 
    = \sup_{f^* \in f(\mathcal{L^*})} \inf_{z \in f^{-1}(f^*)} \min_{i =1, \dots, m}\left( f_i(x^k) - f_i(z) \right) \\
    = & \sup_{f^* \in f(\mathcal{L^*})} \min_{i =1, \dots, m} \left( f_i(x^k) - f_i^* \right) 
    =  \sup_{z \in \mathcal{L}^*} \min_{i =1, \dots, m} \left( f_i(x^k) - f_i(z) \right) \\
    = & \sup_{z \in \H} \min_{i =1, \dots, m} \left( f_i(x^k) - f_i(z) \right).
\end{align*}
\end{proof}
The function $u_0(x) = \sup_{z \in \H} \min_{i=1, \dots, m} f_i(x) - f_i(z)$ attains the value zero if and only if $x$ is weakly Pareto optimal. Theorem \ref{thm:convergence_u_0} shows that $u_0(x^k) = \mathcal{O}(k^{-2})$.
\subsection{Relation to Tanabe's Accelerated Multiobjective Gradient Method}
In the recent preprint \cite{Tanabe2022_2}, Tanabe, Fukuda and Yamashita define an accelerated proximal gradient method for MOPs with objective functions that have a separable structure of the form $f_i(x) = g_i(x) + h_i(x)$, where $g_i: \R^n \to \R$ is convex, continuously differentiable with $L$-Lipschitz continuous gradient and $h_i:\R^n \to \R$ is convex, lower semicontinuous and proper for all $i = 1,\dots, m$. Since we only treat the case of smooth objective functions $f_i$, we set from here on $h_i \equiv 0$. Tanabe et al.\ discovered their method using techniques different from the ones used throughout this paper, using the concept of merit functions. We will not recite their method here but refer the reader to \cite{Tanabe2022_2}. To understand the similarity between their method and Algorithm \ref{algo:ACC_GRAD}, we investigate the quadratic optimization problems that have to be solved in each iteration of the methods, respectively. In the method from \cite{Tanabe2022_2}, the step direction is computed by solving a quadratic optimization problem with the following objective function $\Psi:\R^m \to \R$,
\begin{align*}
    \Psi(\theta) \coloneqq \frac{s}{2}\lVert \sum_{i=1}^m \theta_i\nabla f_i(y^k) \rVert^2 + \sum_{i =1}^m \theta_i \left(f_i(x^k) - f_i(y^k)\right).
\end{align*}
Using the first order approximation $f_i(y^k) - f_i(x^k) \approx \langle \nabla f_i(y^k), y^k - x^k \rangle$, we get
\begin{align*}
     \Psi(\theta) \approx \frac{s}{2}\lVert \sum_{i=1}^m \theta_i\nabla f_i(y^k) \rVert^2 + \langle \sum_{i =1}^m \theta_i \nabla f_i(y^k), x^k -y^k \rangle.
\end{align*}
Minimizing $\Psi(\theta)$ is equivalent to minimizing the function $\Phi:\R^m \to \R$,
\begin{align*}
    \Phi(\theta) \coloneqq & \frac{s^2}{2}\lVert \sum_{i=1}^m \theta_i\nabla f_i(y^k) \rVert^2 + \langle s\sum_{i =1}^m \theta_i \nabla f_i(y^k), x^k -y^k \rangle + \frac{1}{2} \lVert x^k - y^k \rVert^2\\
    = &\frac{1}{2}\lVert s \sum_{i=1}^m \theta_i\nabla f_i(y^k) + (x^k - y^k) \rVert^2.
\end{align*}
Using $x^k - y^k = -\frac{k-1}{k+2}(x^k - x^{k-1})$ we note that $\Phi(\theta)$ is in fact the objective function of the quadratic optimization problem \eqref{eq:QOP_a_3}. After this observation, it is not surprising that the method by Tanabe et al.\ shows covergence behavior similar to Algorithm \ref{algo:ACC_GRAD}.

\section{Improving the Numerical Efficiency}
\label{sec:improving numerical efficiency}
First order methods for multiobjective optimization that are based on the steepest descent method by Fliege and Svaiter \cite{Fliege2000} require the solution of a quadratic subproblem in each iteration. Computing the solutions of these problems is computational demanding. In the following subsection, we present a possible approach to overcome this problem.
\subsection{A Multiobjective Gradient Method without Quadratic Subproblems}
In this subsection, we define a method based on Algorithm \ref{algo:ACC_GRAD} which does not require the solution of a quadratic subproblem in each iteration. In Subsection \ref{subsec:intro_new_acc_imog'}, we derived Algorithm \ref{algo:ACC_GRAD} from the scheme
\begin{align}
    \frac{3}{k} (x^{k+1} - x^k) + \proj_{sC(y^k) + (x^{k+1} - 2x^k + x^{k-1})}(0) = 0,
\end{align}
which can be interpreted as a discretization of the differential equation
\begin{align*}
    \frac{3}{t}\dot{x}(t) + \proj_{C(x(t)) + \ddot{x}(t)}(0) = 0.
\end{align*}
If, instead, we use the discretization
\begin{align}
    \frac{3}{k} (x^{k} - x^{k-1}) + \proj_{sC(y^k) + (x^{k+1} - 2x^k + x^{k-1})}(0) = 0,
\label{eq:finite_diff_scheme_a_3_alt}
\end{align}
we obtain a different method. Lemma \ref{lem:proj_1} gives a formula to compute $x^{k+1}$
\begin{align}
    x^{k+1} & = -\frac{3}{k} (x^k - x^{k-1}) - s\sum_{i=1}^m \theta_i^k \nabla f_i(x^k) + 2x^k - x^{k-1}, \\
    & = x^k + \frac{k - 3}{k} (x^k - x^{k-1}) - s\sum_{i = 1}^m \theta_i^k \nabla f_i(x^k),
\label{eq:greedy_scheme}
\end{align}
where $\theta^k \in \R^m$ is a solution to the problem
\begin{align}
\label{eq:lin_problem2}
\begin{split}
    \min -  \sum_{i = 1}^m \theta_i \langle\nabla f_i(x^k), x^k - x^{k-1} \rangle\,\, \st \,\,\theta \ge 0 \,\,\text{and}\,\, \sum_{i = 1}^m \theta_i = 1.
\end{split}
\end{align}
This can be solved efficiently by computing $m$ inner products. After changing $\frac{k - 3}{k}$ into $\frac{k-1}{k + 2}$ in \eqref{eq:greedy_scheme}, we define Algorithm \ref{algo:ACC_GRAD_wo_Q}.
\begin{algorithm} 
	\caption{Accelerated multiobjective gradient method without quadratic subproblems}
	\label{algo:ACC_GRAD_wo_Q}
	\begin{algorithmic}[1] 
		\Require Choose $x^0 = x^1 \in \H$, $s > 0$ and set k = 1.
		\State Set $y^k = x^k + \frac{k-1}{k+2}(x^k - x^{k-1})$.
		\State Compute $j =\argmax_{i = 1, \dots, m}   \langle \nabla f_i(y^k), x^k - x^{k-1} \rangle$.
		\State Set $x^{k+1} = y^k - s\nabla f_j(y^k)$
		\If{stopping condition is true}
            \State Stop.
        \Else 
            \State Update $k \leftarrow k+1$ and go to step 1.
        \EndIf
	\end{algorithmic} 
\end{algorithm}
There is no proof of convergence for the method defined by Algorithm \ref{algo:ACC_GRAD_wo_Q} but we discuss its numerical behavior in Section \ref{sec:numerical_experiments}.
Also, convergence can be guaranteed by switching from the significantly faster Algorithm \ref{algo:ACC_GRAD_wo_Q} to Algorithm \ref{algo:ACC_GRAD} as soon as some heuristic criterion is met.

\subsection{Backtracking for unknown Lipschitz Constants}
In all presented algorithms, we can include backtracking if the Lipschitz constants of the gradients $\nabla f_i$ of the objective functions are unknown. We can do this as stated in \cite{Beck2009, Tanabe2022_2}. To include backtracking, we choose an initial step size $s_0 > 0$ and a parameter $\sigma \in (0,1)$. In all discussed algorithms there is a step $x^{k+1} = w^k - s d^k$, with $d^k \in \H$ and $w^k = x^k$ or $w^k = y^k$. One can replace this step with $x^{k+1} = w^k - s_k d^k$, with a step size $s_k$ that is determined using backtracking. We choose in every step $s_k = \sigma^{l_k} s_{k-1}$ where $l_k \ge 0$ is the smallest nonnegative integer satisfying for all $i = 1,\dots, m$
\begin{align*}
    f_i(w^k - \sigma^{l_k} s_{k-1} d^k) \le f_i(w^k) - \sigma^{l_k} s_{k-1} \langle \nabla f_i(w^k), d^k \rangle + \frac{\sigma^{l_k} s_{k-1}}{2} \lVert d^k \rVert^2.
\end{align*}
The sequence $(s_k)_{k \ge 0}$ is monotonically decreasing by definition. Under the condition that the objective functions posses $L$-Lipschitz continuous gradients, it is guaranteed that the sequence $(s_k)_{k \ge 0}$ is constant from same $k$ on. This is true since $s_k$ can only decrease as long as $s_k > \frac{1}{L}$. Therefore, $s_k$ can only decrease finitely many times until it reaches a point where $s_k \le \frac{1}{L}$. Using this observation, we can include backtracking in Algorithm \ref{algo:ACC_GRAD} and still use the proofs of Theorem \ref{thm:convergence_sigma_k_z} and Theorem \ref{thm:convergence_u_0} to show that the same convergence results can be achieved.
\section{Numerical Examples}
\label{sec:numerical_experiments}
In this section, we present the typical behavior of our algorithms on two test problems. We compare Algorithms \ref{algo:ACC_GRAD} and \ref{algo:ACC_GRAD_wo_Q} with the steepest descent method by Fliege and Svaiter with constant step sizes \cite{Fliege2000}. Throughout this section we denote the steepest descent method by SD, Algorithm \ref{algo:ACC_GRAD} by AccG (accelerated gradient method) and Algorithm \ref{algo:ACC_GRAD_wo_Q} by AccG w\textbackslash o Q (accelerated gradient method without quadratic subproblems). We implemented all codes using Matlab R2021b and executed the algorithms on a machine with a 2.80 GHz Intel Core i7 processor and 48 GB memory. We solved the quadratic subproblems for SD and AccG using the built-in Matlab function quadprog.  
\subsection{Example 1: A Convex MOP with three Objective Functions}
\label{subsec:Example1}
\begin{figure}
\centering
    \begin{subfigure}[b]{0.46\textwidth}
        \centering
        \includegraphics[width=\linewidth]{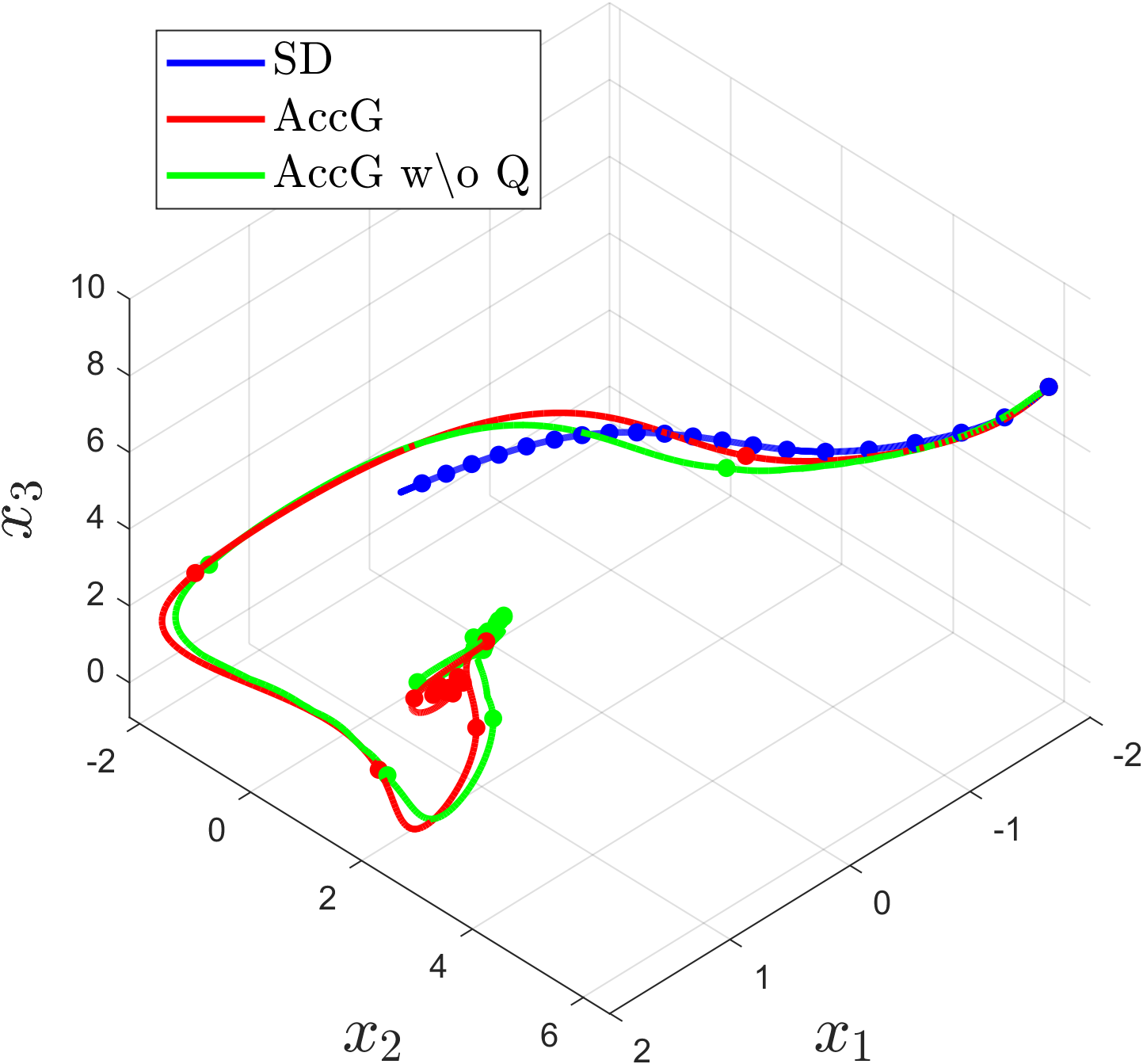}
  \caption{}\label{fig:iteration_sequence_a}
    \end{subfigure}
    \hfill
    \begin{subfigure}[b]{0.42\textwidth}
        \centering
        \includegraphics[width=\linewidth]{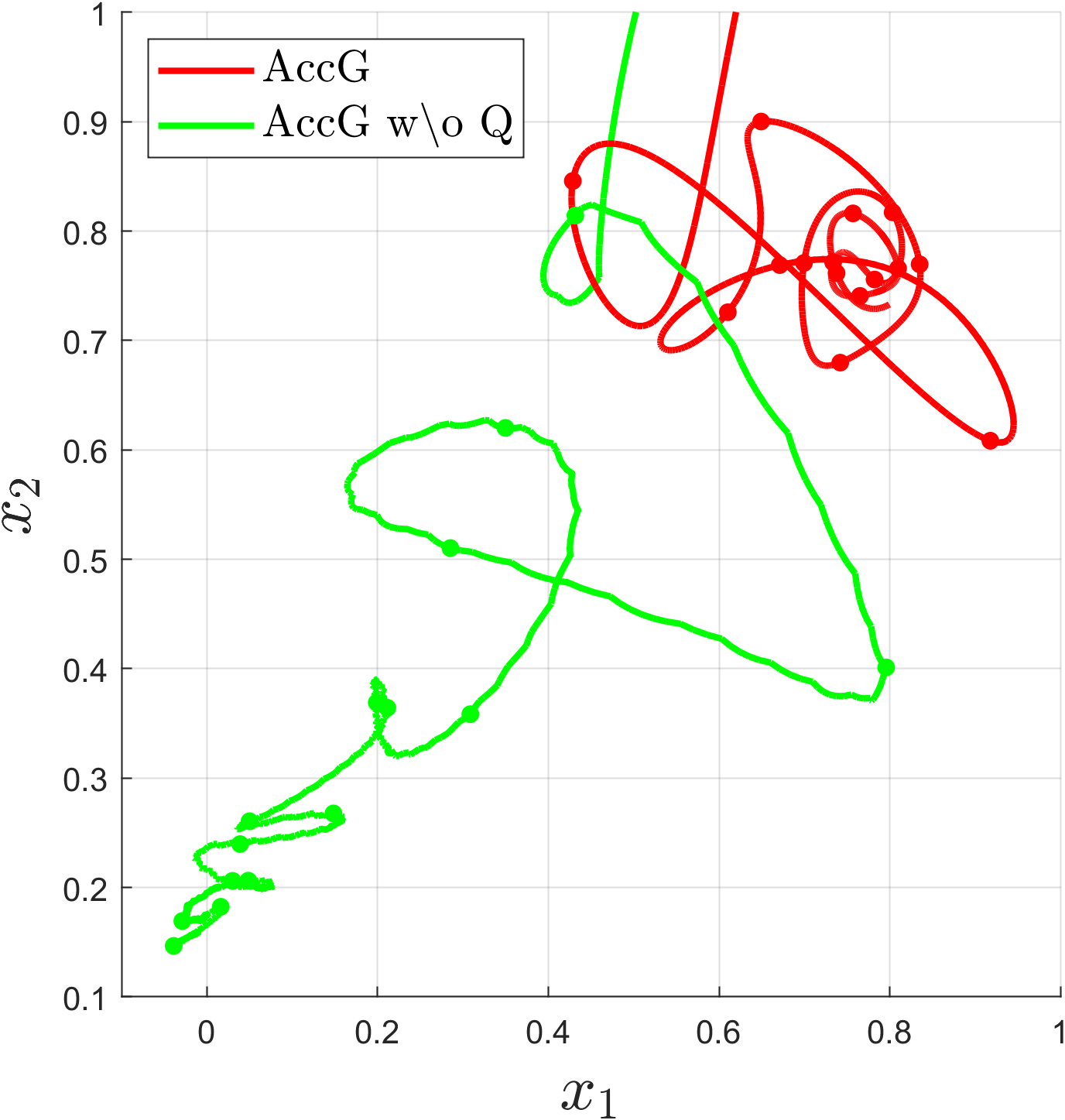}
  \caption{}\label{fig:iteration_sequence_b}
    \end{subfigure}
        \caption{Coordinates $(x_1, x_2, x_3)$ of the sequences $(x^k)_{k \ge 0}$ for SD, AccG and AccG w\textbackslash o Q. Line plot for $1000$ iterations with a filled circle every $50$ iterations to compare the velocities.}
        \label{fig:iteration_sequence}
        \vspace{4mm}
     \begin{subfigure}[b]{0.3\textwidth}
        \centering
        \includegraphics[width=\linewidth]{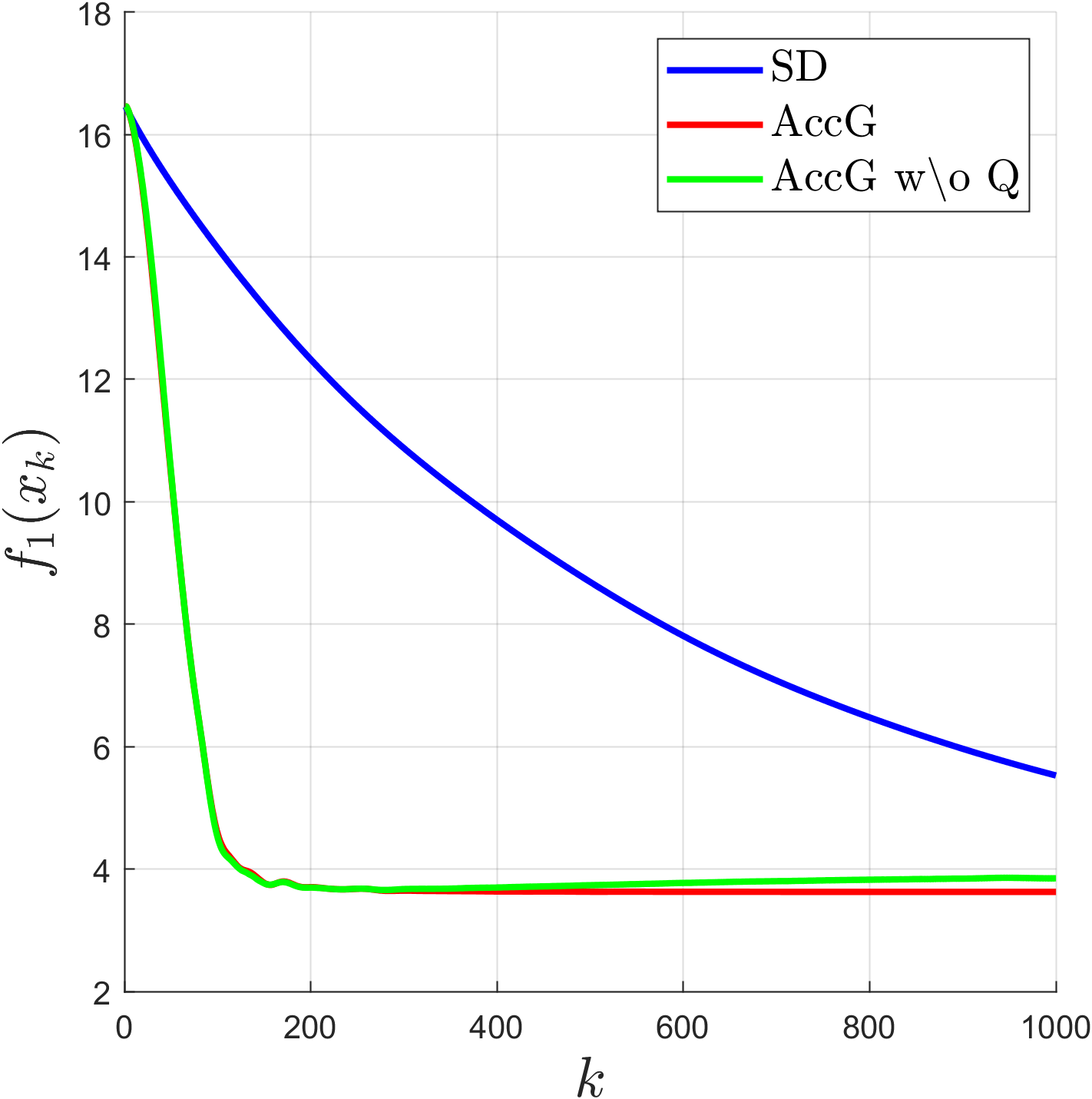}
        \caption{Function values $f_1(x^k)$}\label{fig:function_values_a}
    \end{subfigure}
    \hfill
    \begin{subfigure}[b]{0.3\textwidth}
        \centering
        \includegraphics[width=\linewidth]{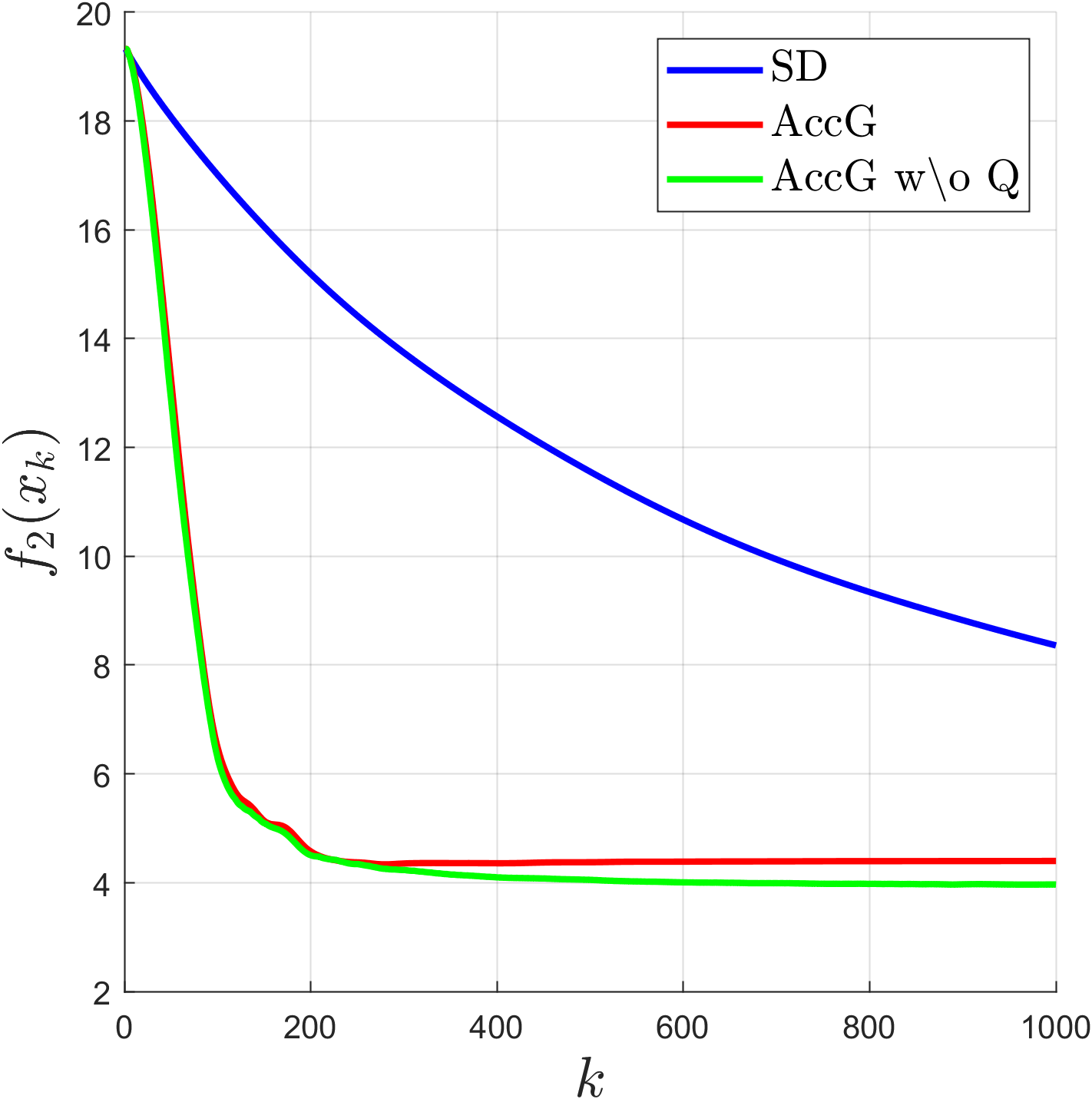}
        \caption{Function values $f_2(x^k)$}\label{fig:function_values_b}
     \end{subfigure}
     \hfill
     \begin{subfigure}[b]{0.3\textwidth}
         \centering
         \includegraphics[width=\linewidth]{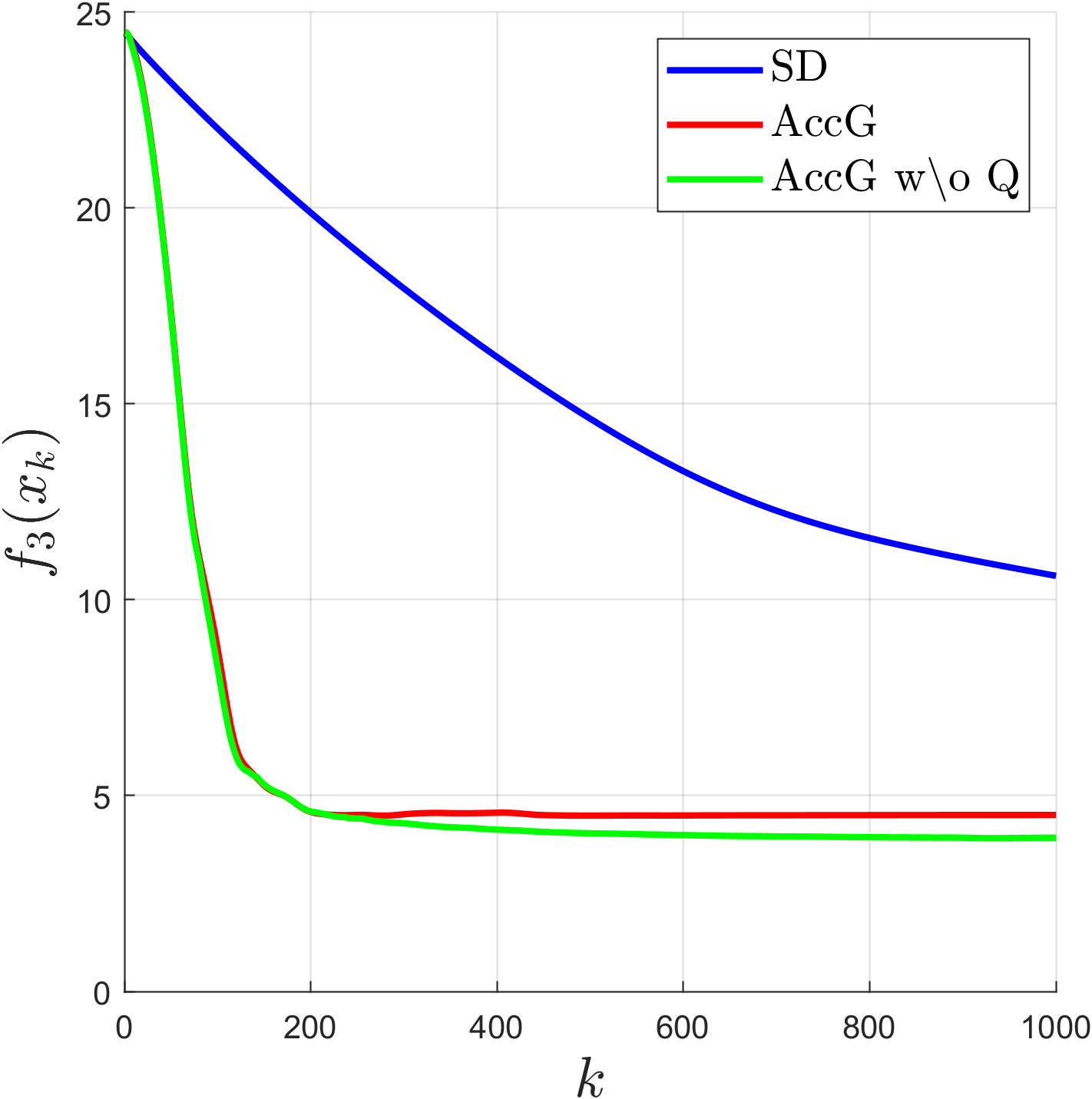}
  \caption{Function values $f_3(x^k)$}\label{fig:function_values_c}
     \end{subfigure}
        \caption{Function values $(f_i(x^k))_{k\ge 0}$ of the iterates for the objective functions $i = 1,2,3$ for the different algorithms.}
        \label{fig:function_values}
\end{figure}
\begin{figure}
\centering
    \begin{subfigure}[b]{0.4\textwidth}
        \centering
        \includegraphics[width=\linewidth]{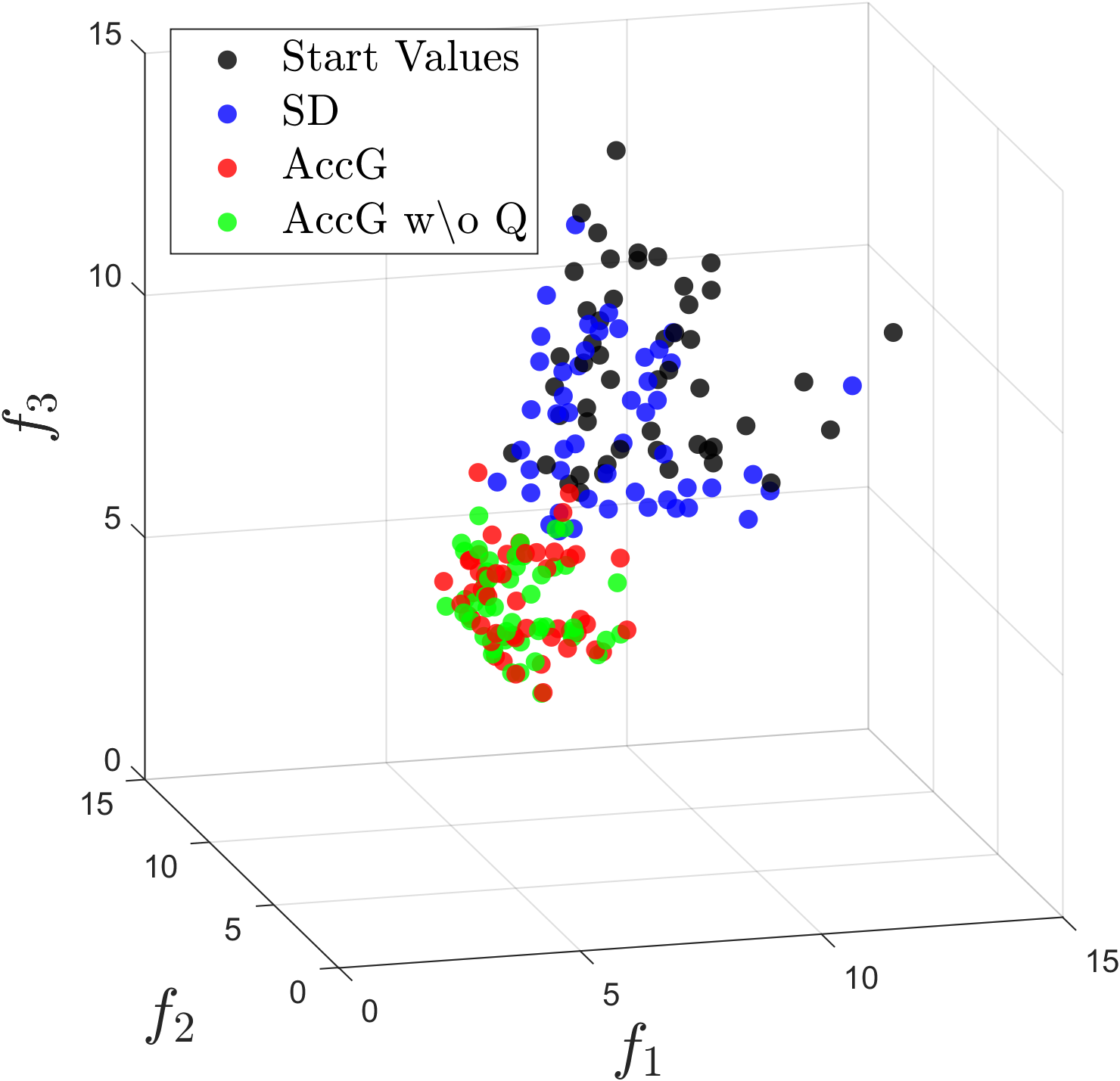}
  \caption{$k_{\max} = 50$}\label{fig:pareto_front_a}
    \end{subfigure}
    \hfill
    \begin{subfigure}[b]{0.4\textwidth}
        \centering
        \includegraphics[width=\linewidth]{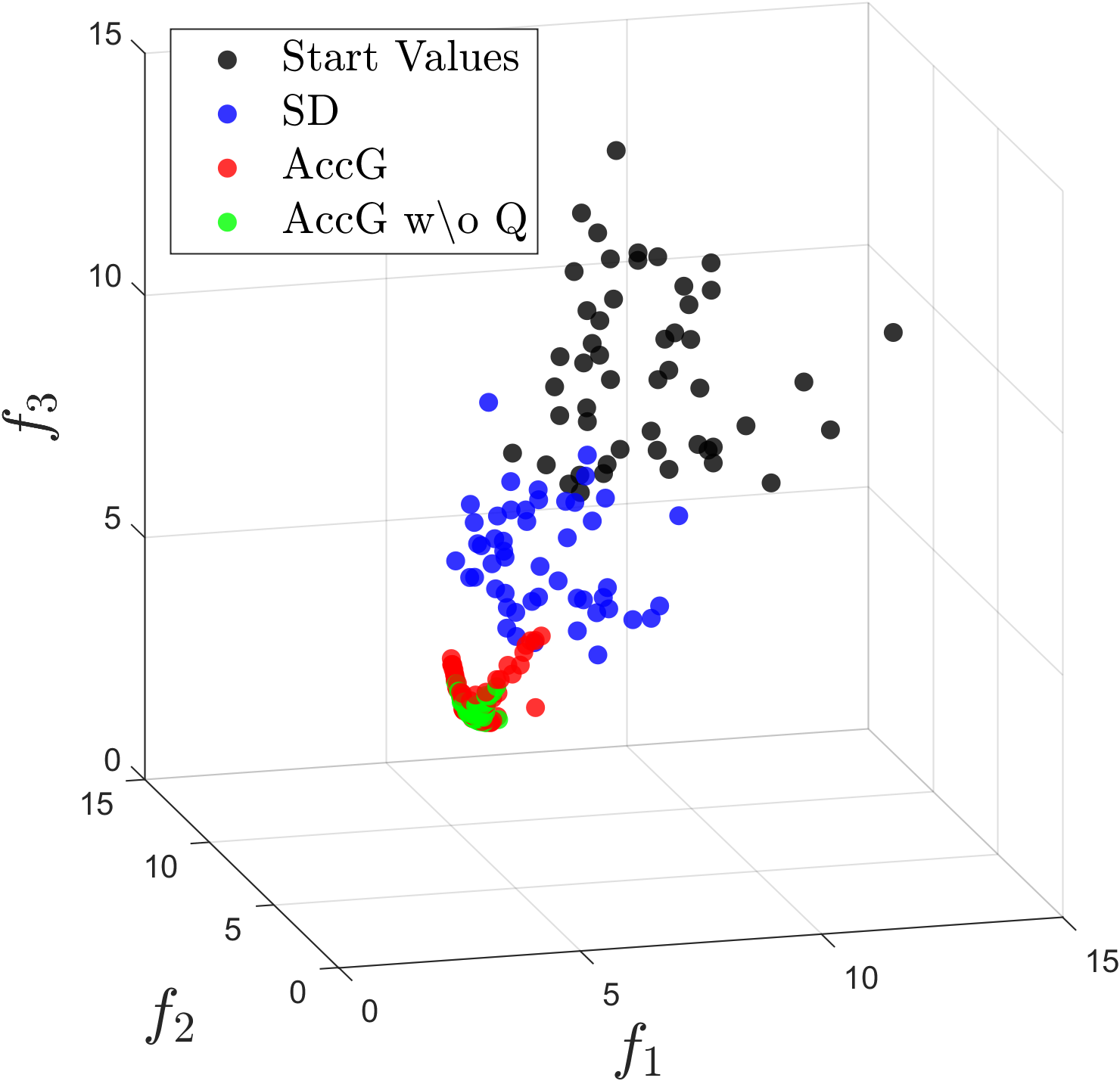}
  \caption{$k_{\max} = 250$}\label{fig:pareto_front_b}
    \end{subfigure}
     \begin{subfigure}[b]{0.4\textwidth}
        \centering
        \includegraphics[width=\linewidth]{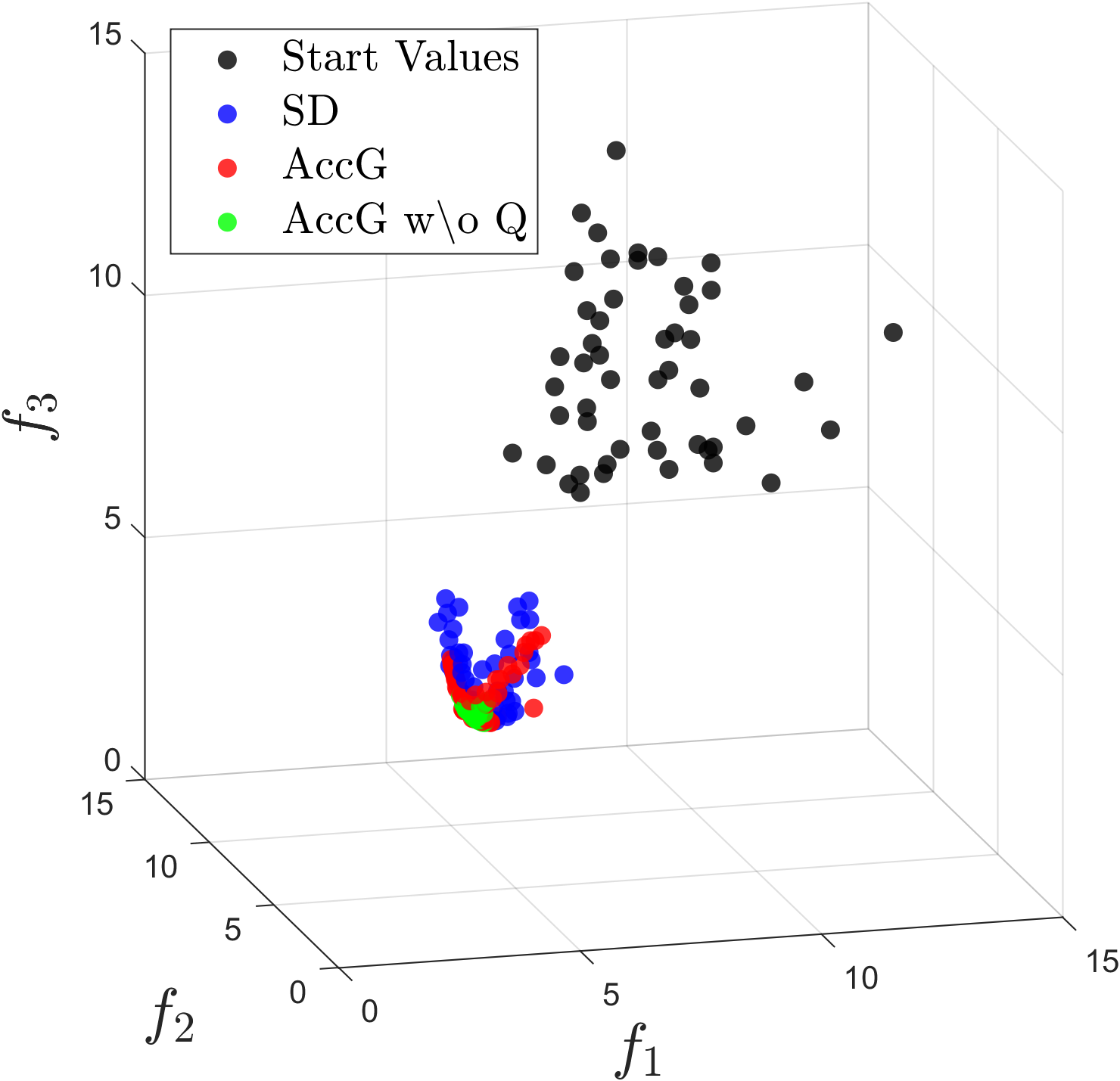}
        \caption{$k_{\max} = 1000$}\label{fig:pareto_front_c}
    \end{subfigure}
    \hfill
    \begin{subfigure}[b]{0.4\textwidth}
        \centering
        \includegraphics[width=\linewidth]{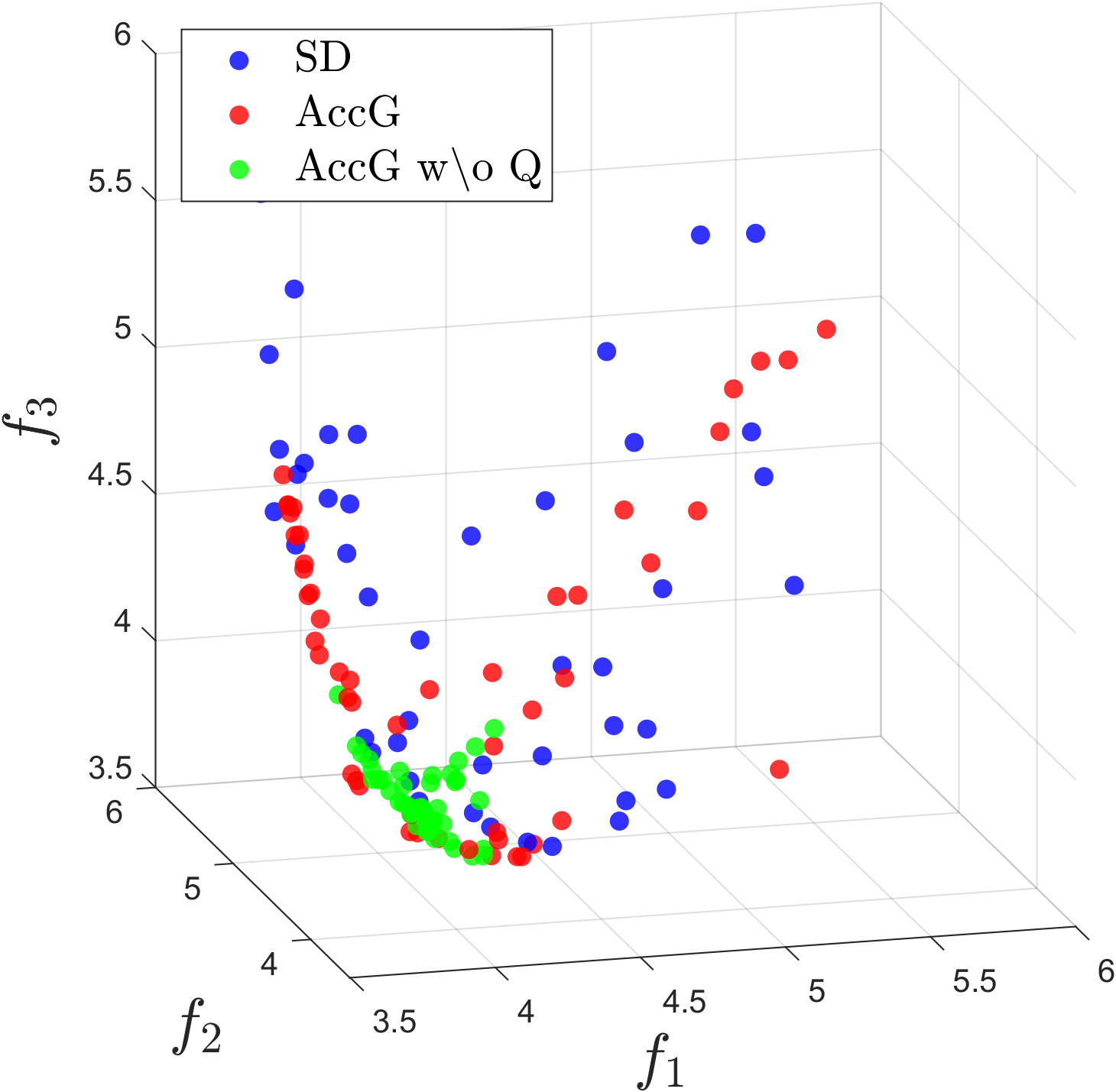}
        \caption{$k_{\max} = 1000$}\label{fig:pareto_front_d}
    \end{subfigure}
    \caption{Function values of the objective functions in the image space for the different algorithms and a different maximum number of iterations $k_{\max} = 50, 250, 1000$.}
    \label{fig:pareto_front}
\end{figure}
In our first example, we choose a problem with input dimension $n = 20$ and three objective functions ($m = 3$). We define the objective functions using the following parameters. For $p = 50$ and $i = 1,2,3$ we generate matrices $A^i = \left( a_1^i, \dots, a_p^i \right)^T  \in \R^{p \times n}$ with $a_j^i \in \R^n$ for $j = 1,\dots, p$ and vectors $b^i \in \R^p$. Then, for $i = 1,2,3$, we define the objective functions
\begin{align*}
    f_i :\R^n \to \R, \quad x \mapsto \ln\left( \sum_{j = 1}^p \exp\left( (a_j^i)^T x \right) \right).
\end{align*}
For the first experiment we randomly generate Matrices $A^i \in \R^{p \times n}$ and vectors $b_i \in \R^p$ with entries uniformly sampled in $[-1, 1]$ for $i = 1,2,3$. The starting vector $x_0$ is uniformly randomly drawn from $[-15, 15]^n$. We use the step size $s = \expnumber{5}{-2}$ and execute maximally $k_{\max} = 1000$ iterations. Figure \ref{fig:iteration_sequence} contains plots of the sequences $(x^k)_{k \ge 0}$ for the different algorithms. In Figure \ref{fig:iteration_sequence_a}, one sees that the sequences generated with AccG and AccG w\textbackslash o Q advance much faster in the beginning, while the velocity for the sequence generated with SD remains constant. The sequences generated by AccG and AccG w\textbackslash o Q give very similar trajectories in the beginning. This result is intuitive given that the schemes in the algorithms are derived from different discretizations of the same differential equation. However, this result is still surprising keeping in mind that in Algorithm \ref{algo:ACC_GRAD_wo_Q} we do not solve a quadratic subproblem in each iteration. Only in Figure \ref{fig:iteration_sequence_b}, we see that the sequences differ more substantially in the long run. It is also noteworthy that the sequence generated by AccG is smoother compared to the trajectory generated by AccG w\textbackslash o Q. This is due to the fact that in AccG w\textbackslash o Q we choose one of the gradients of the objective functions for the gradient component of the step direction while in AccG we choose an element of the convex hull of the gradients. AccG and AccG w\textbackslash o Q are superior to SD in terms of convergence of the function values for all objective functions, as shown in Figure \ref{fig:function_values}. AccG and AccG w\textbackslash o Q experience fast convergence within the first $200$ iterations. Comparing the different objective functions in Figures \ref{fig:function_values_a}, \ref{fig:function_values_b} and \ref{fig:function_values_c}, we see that AccG and AccG w\textbackslash o Q yield outputs with similar function values for all objective functions.

In a second experiment, we execute all algorithms for $50$ starting values uniformly sampled in $[-5,5]^n$ with step size $s = \expnumber{5}{-2}$. We use the stopping criterion $\lVert f(x^k) - f(x^{k-1}) \rVert_{\infty} < \expnumber{1}{-4}$ to stop the algorithms if the function values do not change significantly. In Figure \ref{fig:pareto_front_a}, we perform up to $k_{\max} = 50$, in Figure \ref{fig:pareto_front_b} up to $k_{\max} = 250$ and in Figures \ref{fig:pareto_front_c} and \ref{fig:pareto_front_d} up to $k_{\max} = 1000$ iterations. Similar to the results observed in Figures \ref{fig:iteration_sequence} and \ref{fig:function_values}, AccG and AccG w\textbackslash o Q advance much faster in the beginning compared to SD. Comparing Figures \ref{fig:function_values_b} and \ref{fig:function_values_c}, we see that after $250$ iterations the function values for the accelerated methods are converging or the stopping conditions were met. The different behavior of the accelerated methods can be observed in Figure \ref{fig:pareto_front_d}. While the solutions of AccG are farther spread, it looks like the solutions of AccG w\textbackslash o Q are drawn towards the center of the Pareto front. Altogether, the accelerated methods perform better for this problem in terms of convergence speed of the function values. In Table \ref{table:iterations_ex1} the total number of iterations and computation times for the experiments are listed. The accelerated methods require fewer iterations. Compared to SD, AccG requires only approximately 25 \% and AccG w\textbackslash o Q only approximately 50 \% iterations. For the computation times the results are different. SD and AccG behave similar, with AccG requiring approximately 25 \% of the computation time that is required for SD. However, AccG w\textbackslash o Q needs less the 2 \% of the time which is consumed by AccG. This improvement stems from the quadratic optimization problems that not required in AccG w\textbackslash o Q.
\begin{table}[ht!]
\centering
\begin{tabular}{c c c c}
 \vspace{1mm}
  & SD & AccG & AccG w\textbackslash o Q\\
 \hline
 total iterations & 49924 & 12230 & 25906 \\ 
 total time & $436.54 \,\text{s}$ & $100.94 \,\text{s}$ & $1.61 \,\text{s}$ \\
 \hline
\end{tabular}
\caption{Total iterations and computation times for algorithm executions using parameters $s = \expnumber{5}{-2}$, $k_{\max} = 1000$ and stopping condition $\lVert f(x^k) - f(x^{k-1}) \rVert_{\infty} < \expnumber{1}{-4}$ for $50$ start values uniformly sampled in $[-15, 15]^n$.}
\label{table:iterations_ex1}
\end{table}

\subsection{Example 2: A Nonconvex MOP with two Objective Functions}
For our second test problem, we choose an example from \cite{Witting2012} with input dimension $n = 2$ and the two objective functions
\begin{align*}
    f_1(x) = \frac{1}{2}(\sqrt{1 + (x_1 + x_2)^2} + \sqrt{1 + (x_1 - x_2)^2} + x_1 - x_2) + \lambda \exp{(-(x_1-x_2)^2 )},\\
    f_2(x) = \frac{1}{2}(\sqrt{1 + (x_1 + x_2)^2} + \sqrt{1 + (x_1 - x_2)^2} - x_1 + x_2) + \lambda \exp{(-(x_1-x_2)^2)},
\end{align*}
with $\lambda = 0.6$. For the multiobjective optimization problem \eqref{eq:MOP} with these objective functions it can easily be verified that the Pareto set is
\begin{align*}
    P = \left\lbrace x \in \R^2 \,\,:\,\, x_1 + x_2 = 0 \right\rbrace.
\end{align*}
In the first experiment, we execute Algorithms SD, AccG and AccG w\textbackslash o Q with the starting vector $x^0 = (1,2)^T$ and perform $k_{\max} = 1000$ iterations. The step size is set to $s = \expnumber{5}{-3}$. The sequences and function values of the objective functions are shown in Figure \ref{fig:sequence_n_f_val_ex2}. Similarly to the first experiment in Figure \ref{fig:sequence_ex2}, the sequences $(x^k)_{k \ge 0}$ of the  accelerated methods advance faster in the beginning. While Algorithms SD and AccG converge to the same element in the Pareto set the algorithm AccG w\textbackslash o Q produces a trajectory that deviates from the trajectories of SD and AccG and moves to a different part of the Pareto set. The values of the objective functions in Figures \ref{fig:f1_val_ex2} and \ref{fig:f2_val_ex2} indicate a similar behavior. For the accelerated methods we have faster decrease in the beginning and we note that the function values for SD and AccG converge to similar values. In Figure \ref{fig:pareto_front_ex2} we use $100$ random starting points uniformly sampled in $[-2,2]^2$. For the experiments we use different maximal numbers of iterations $k_{\max}$. In addition we stop the algorithm if $\lVert f(x^k) - f(x^{k-1}) \rVert_{\infty} < \expnumber{1}{-4}$. Comparing Figures \ref{fig:pareto_front_a_ex2}, \ref{fig:pareto_front_b_ex2} and \ref{fig:pareto_front_c_ex2}, we note that the objective function values of the accelerated methods decrease much faster in the beginning. For $k_{\max} = 100$ Algorithms AccG and AccG w\textbackslash o Q yield solutions that are distributed along the Pareto front. In Table \ref{table:iterations_ex2} we list the total number of iterations and total computation times for executions with up to $k_{\max} = 1000$ iterations with stopping condition $\lVert f(x^k) - f(x^{k-1}) \rVert_{\infty} < \expnumber{1}{-4}$. Compared to SD, AccG needs only approximately 15 \% and AccG w\textbackslash o Q only approximately 51 \% iterations.
\begin{figure}
\centering
    \begin{subfigure}[b]{0.32\textwidth}
        \centering
        \includegraphics[width=\linewidth]{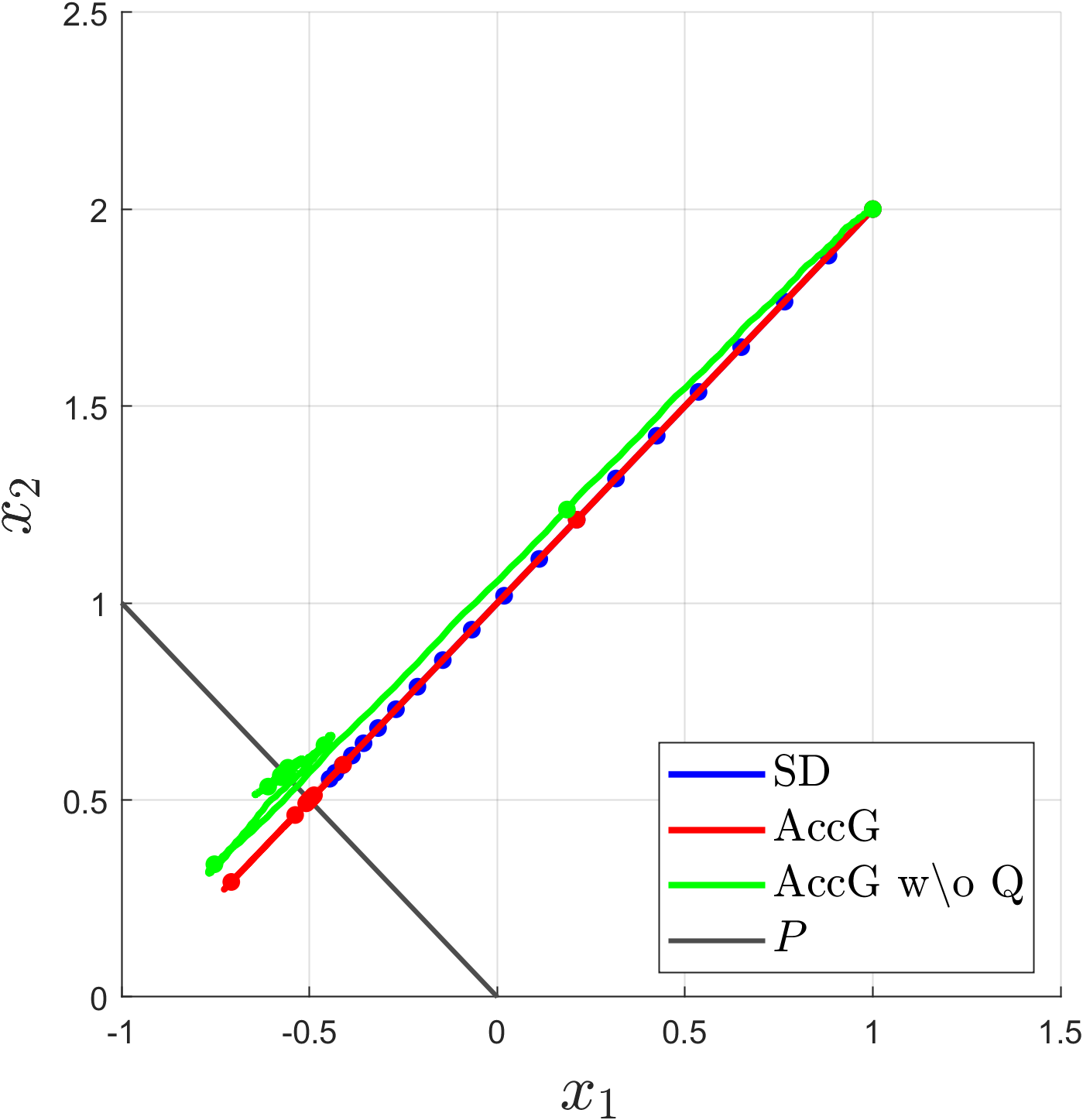}
  \caption{Sequences $(x^k)_{k \ge 0}$}\label{fig:sequence_ex2}
    \end{subfigure}
    \hfill
    \begin{subfigure}[b]{0.32\textwidth}
        \centering
        \includegraphics[width=\linewidth]{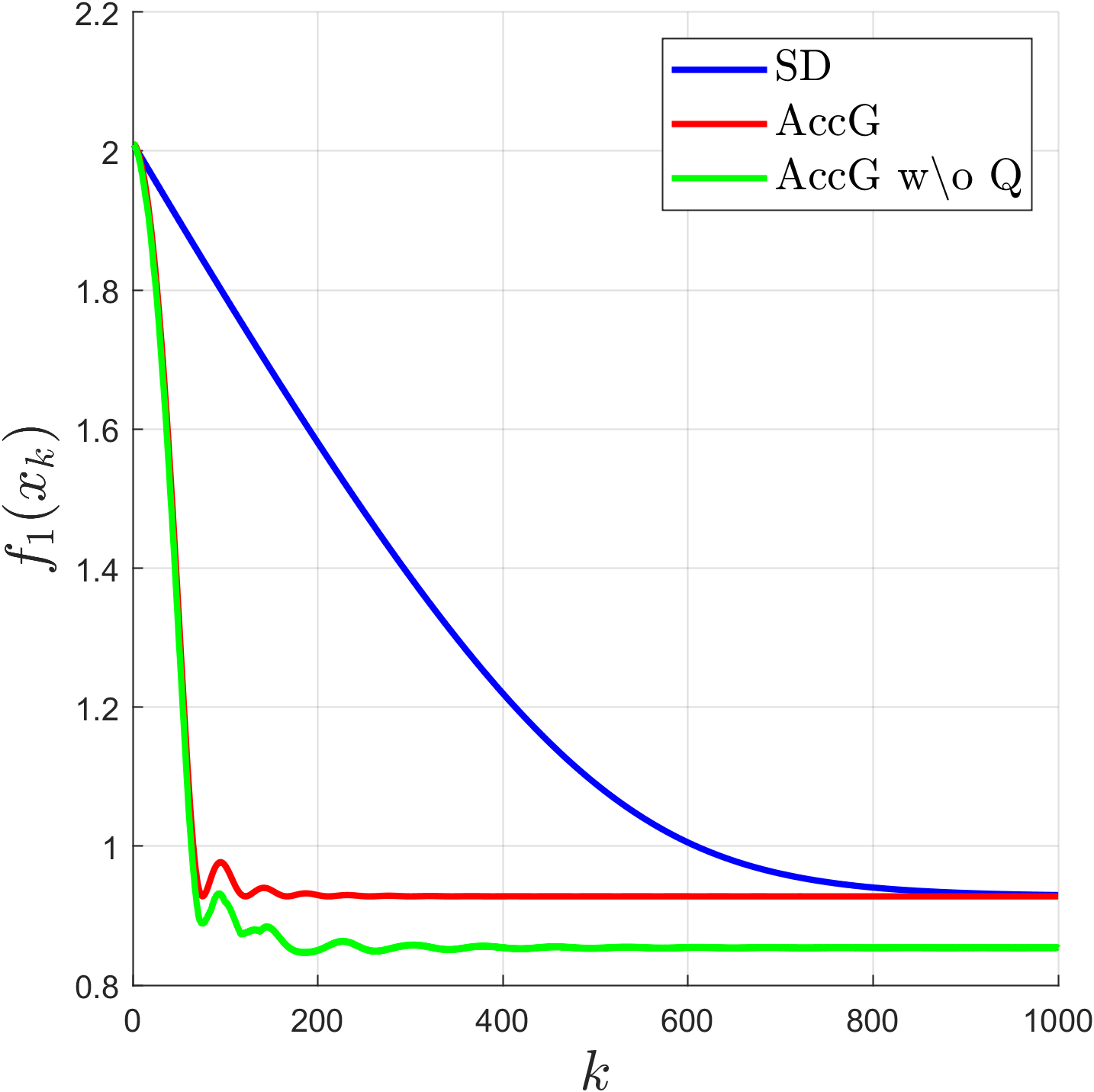}
  \caption{Function values $f_1(x^k)$}\label{fig:f1_val_ex2}
    \end{subfigure}
     \begin{subfigure}[b]{0.32\textwidth}
        \centering
        \includegraphics[width=\linewidth]{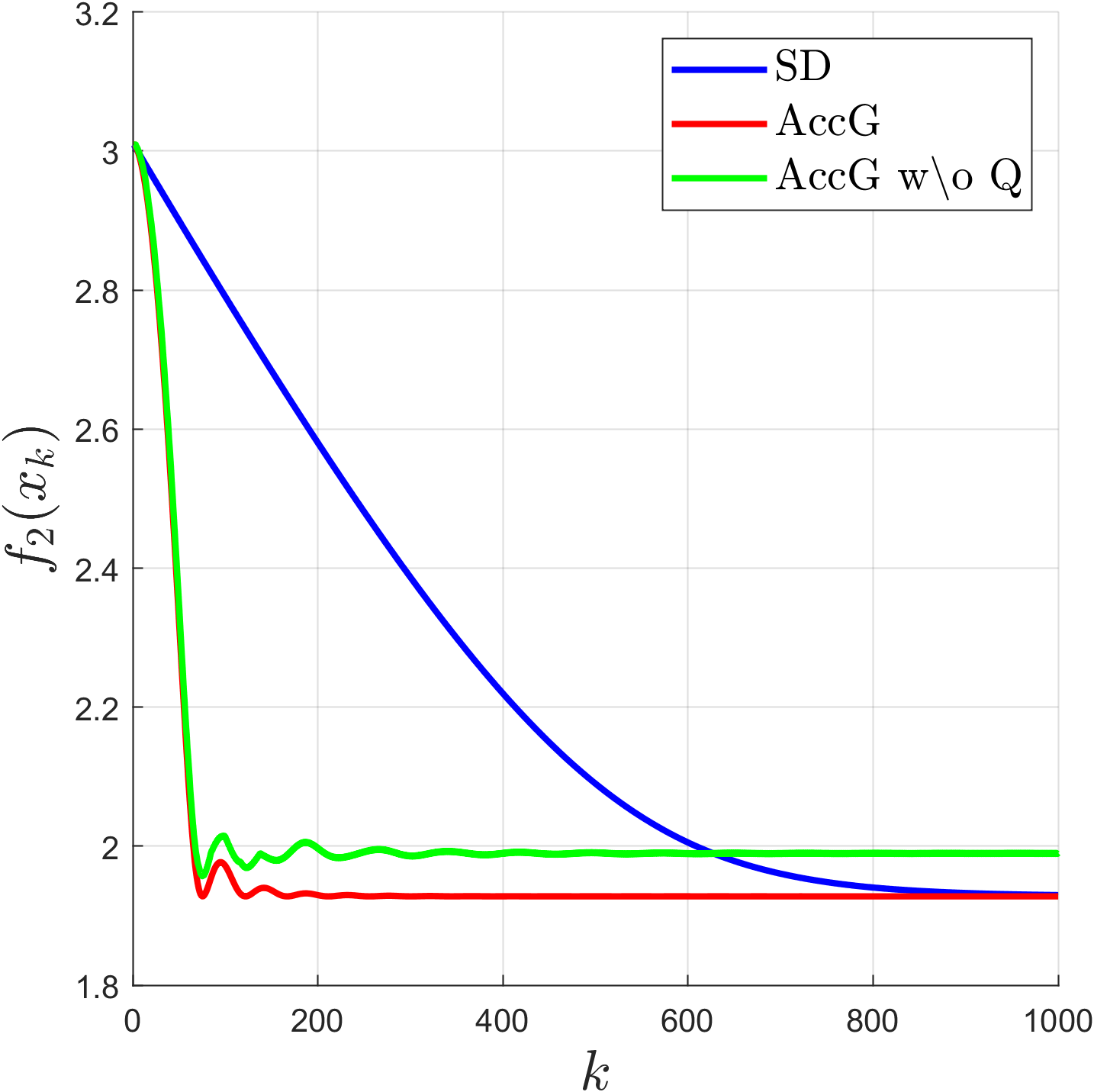}
        \caption{Function values $f_2(x^k)$}\label{fig:f2_val_ex2}
    \end{subfigure}
    \caption{Sequences $(x^k)_{k\ge 0}$ and function values $(f_i(x^k))_{k \ge 0}$ of iterates for $i = 1,2$. For the sequences we use a line plot for 1000 iterations with a filled circle every $50$ iterations to compare velocities.}
    \label{fig:sequence_n_f_val_ex2}
\end{figure}
\begin{figure}
\centering
    \begin{subfigure}[b]{0.32\textwidth}
        \centering
        \includegraphics[width=\linewidth]{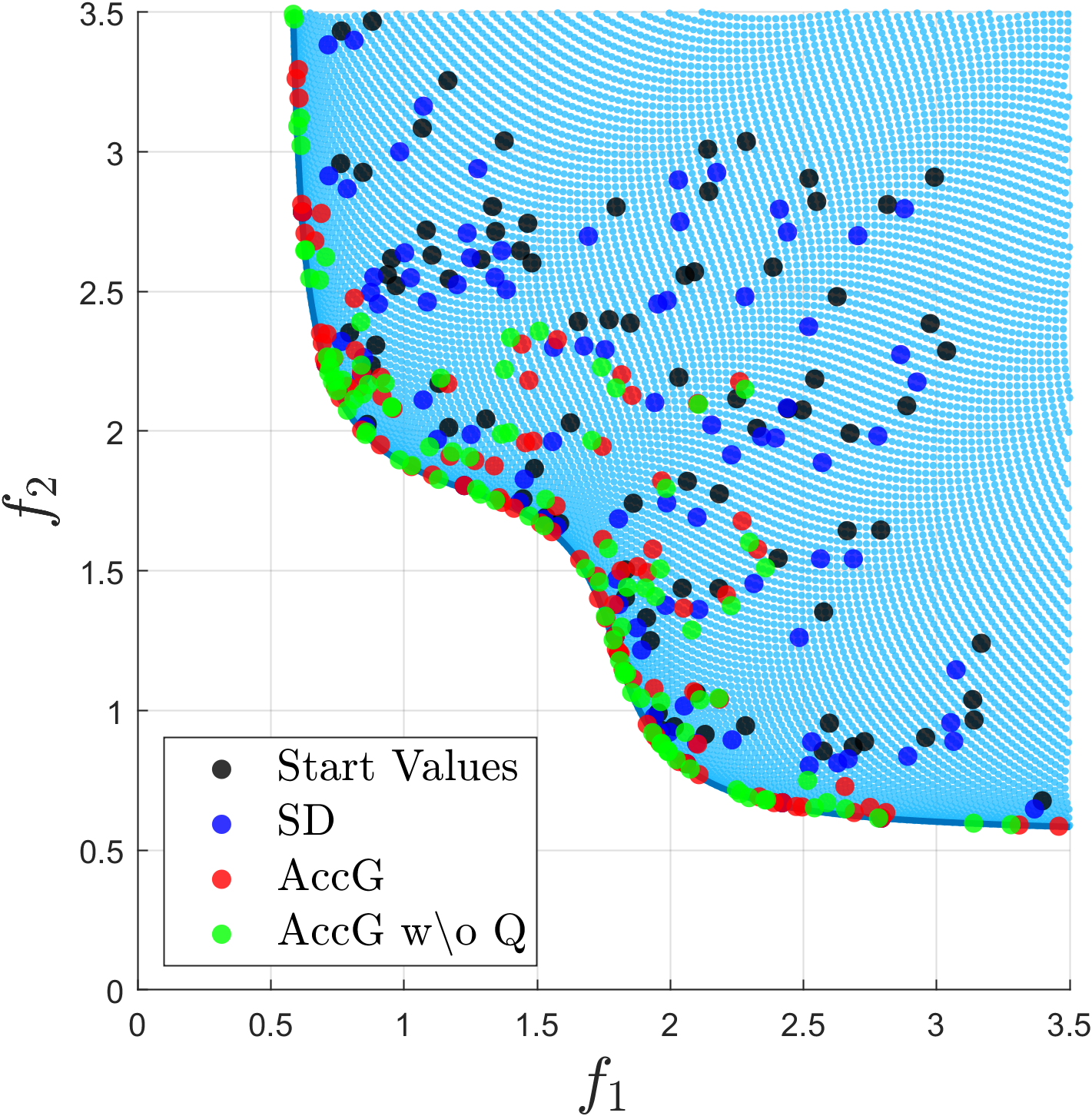}
  \caption{$k_{\max} = 50$}\label{fig:pareto_front_a_ex2}
    \end{subfigure}
    \hfill
    \begin{subfigure}[b]{0.32\textwidth}
        \centering
        \includegraphics[width=\linewidth]{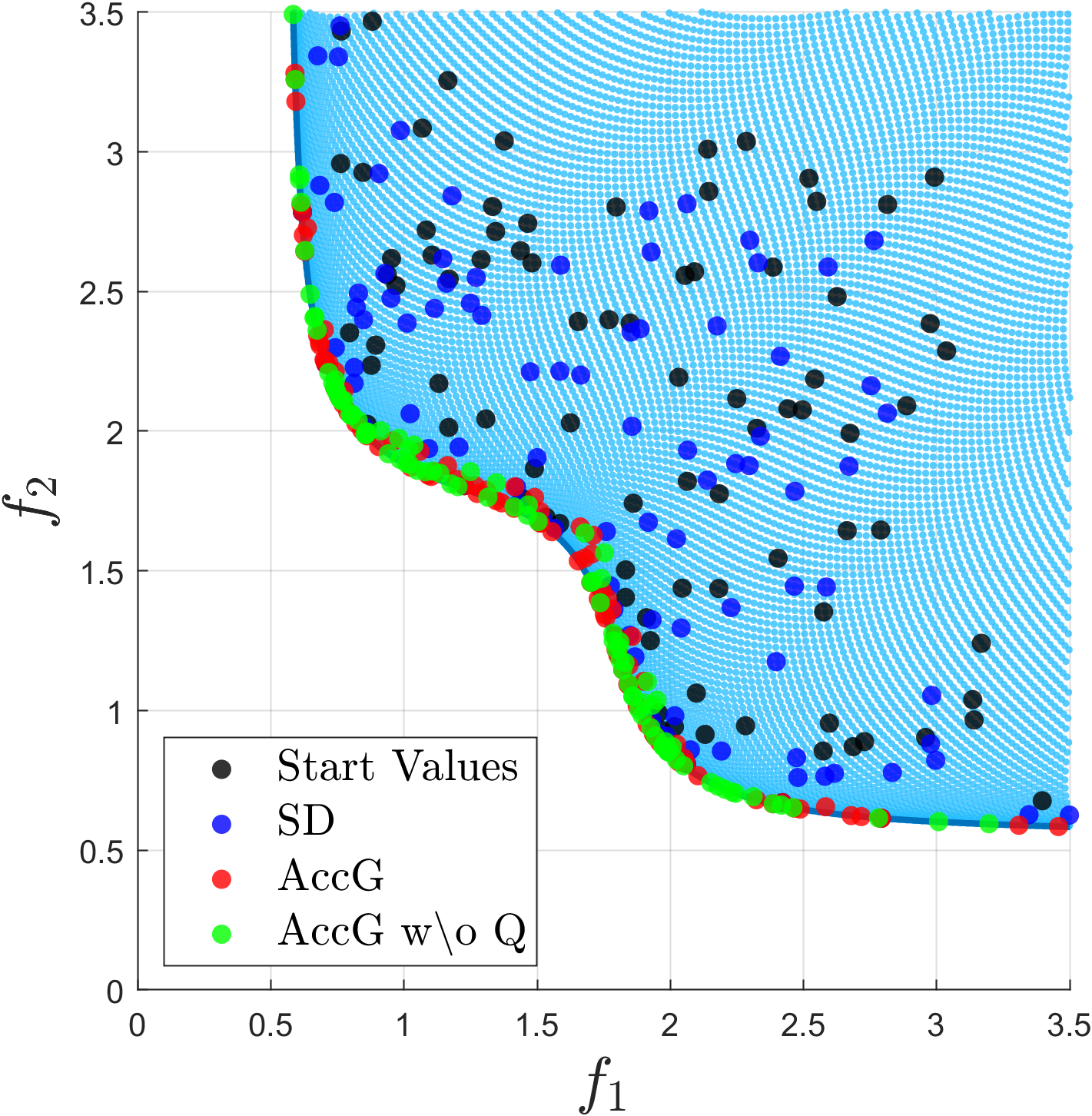}
  \caption{$k_{\max} = 100$}\label{fig:pareto_front_b_ex2}
    \end{subfigure}
     \begin{subfigure}[b]{0.32\textwidth}
        \centering
        \includegraphics[width=\linewidth]{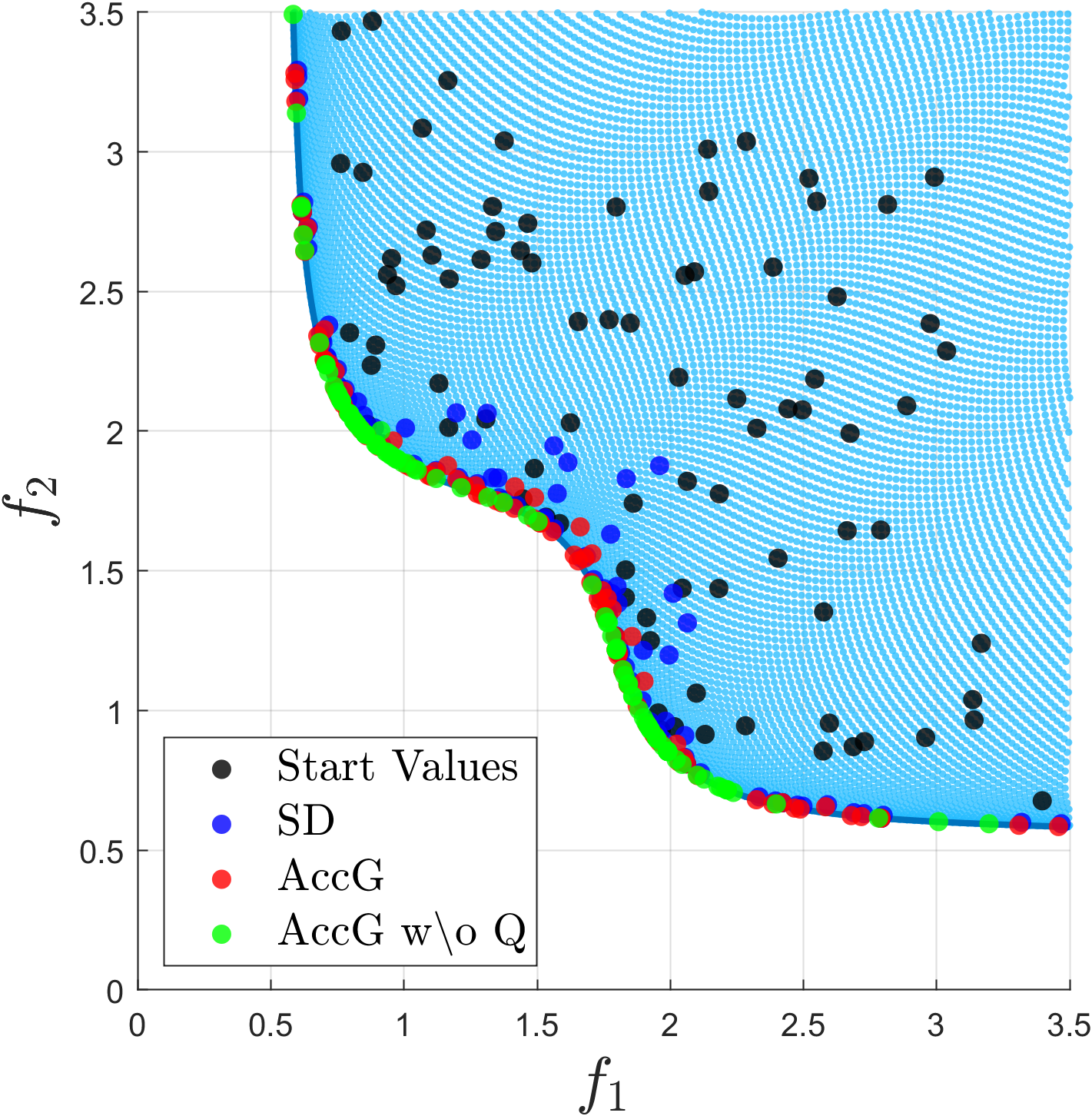}
        \caption{$k_{\max} = 500$}\label{fig:pareto_front_c_ex2}
    \end{subfigure}
    \caption{Values of the objective functions in the image space for different maximum numbers of iterations $k_{\max} = 50, 250, 1000$ for the different algorithms SD, AccG and AccG w\textbackslash o Q}
        \label{fig:pareto_front_ex2}
\end{figure}
\begin{table}[ht!]
\centering
\begin{tabular}{c c c c}
 \vspace{1mm}
  & SD & AccG &AccG w\textbackslash o Q\\
 \hline
 total iterations & 45543 & 6632 & 23034 \\
 total time & $431.75 \,\text{s}$ & $62.90 \,\text{s}$ & $0.31 \,\text{s}$ \\
 \hline
\end{tabular}
\caption{Total iterations and computation times for algorithm executions using parameters $h = \expnumber{5}{-3}$, $k_{\max} = 1000$ and stopping condition $\lVert f(x^k) - f(x^{k-1}) \rVert_{\infty} < \expnumber{1}{-4}$ for $100$ start values uniformly sampled in $[-2, 2]^n$.}
\label{table:iterations_ex2}
\end{table}
In another experiment we compare how the choice of the step size $s$ affects the solutions of AccG w\textbackslash o Q. We use the step sizes $s = \expnumber{5}{-3}, \expnumber{1}{-2}, \expnumber{5}{-2}$. For all executions we perform $k_{\max} = 1000$ iterations, with the stopping criterion $\lVert f(x^k) - f(x^{k-1}) \rVert_{\infty} < \expnumber{1}{-4}$. Comparing Figures \ref{fig:ParetoFront_1000_h_5e-3}, \ref{fig:ParetoFront_1000_h_1e-2} and \ref{fig:ParetoFront_1000_h_5e-2} we see that for the smallest step size $s = \expnumber{5}{-3}$ solutions are distributed on the whole Pareto front. For the biggest step size $s = \expnumber{5}{-2}$ Algorithm AccG w\textbackslash o Q yields solutions that cluster at two points of the Pareto front, which is not desirable in general. The two points where the solutions cluster correspond to the knee points in the Pareto front. This is not surprising since these points correspond to solutions where the individual gradients are similar in magnitude, which is why the solution is zig-zagging back and forth between the objectives in these locations.
However, this disadvantage is compensated by the fact that we do not need to solve a quadratic subproblem in every step. We need potentially more iterations when choosing smaller step sizes but every iteration is computationally cheaper in comparison to SD and AccG. Moreover, a small adaptation to step 2 of Algorithm \ref{algo:ACC_GRAD_wo_Q}, where we include a weighting parameter in the $\max$ problem, might allow us to diversify solutions, similar to the weighted sum method.
\begin{figure}
\centering
    \begin{subfigure}[b]{0.32\textwidth}
        \centering
        \includegraphics[width=\linewidth]{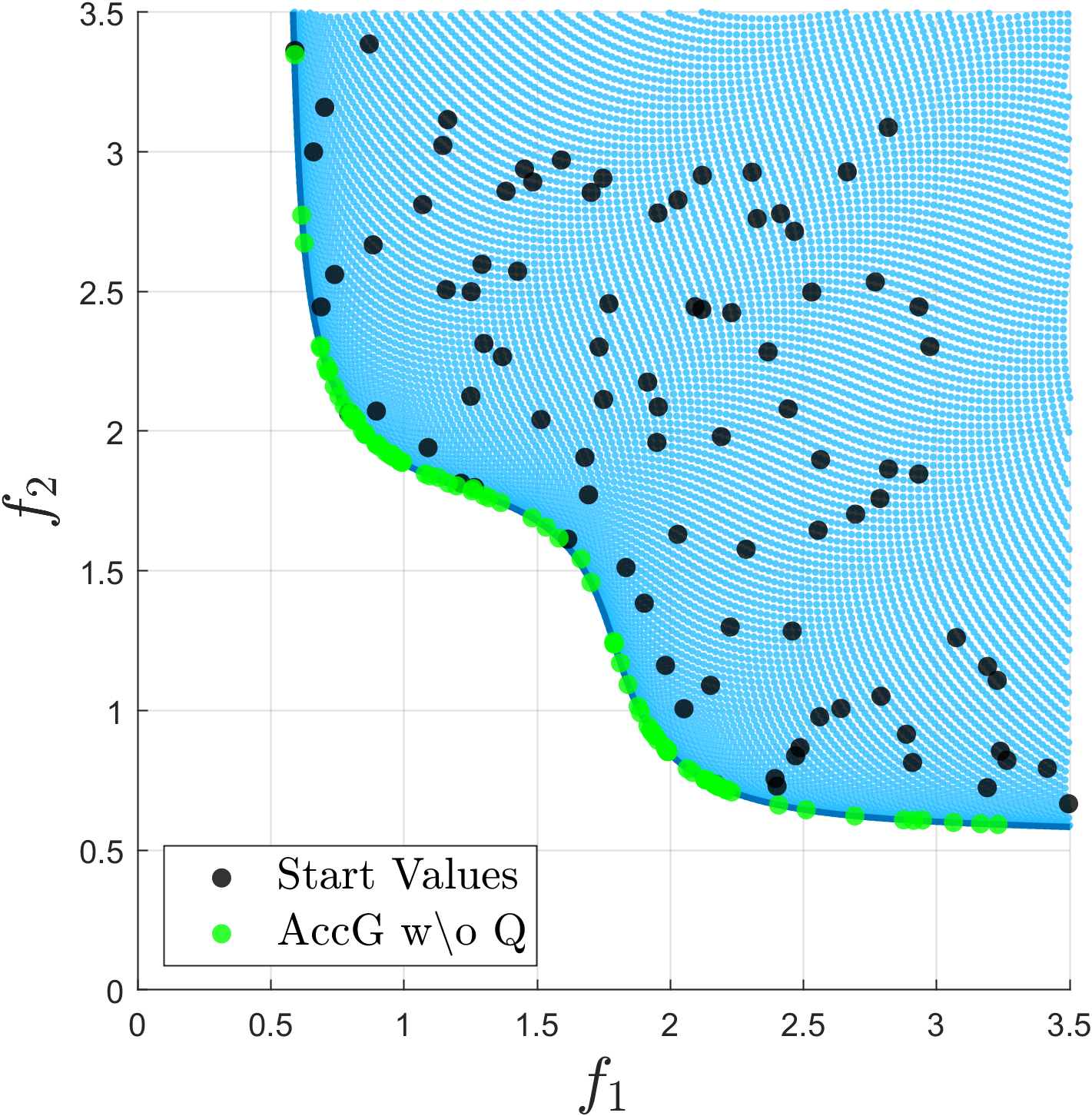}
  \caption{$s = \expnumber{5}{-3}$}\label{fig:ParetoFront_1000_h_5e-3}
    \end{subfigure}
    \hfill
    \begin{subfigure}[b]{0.32\textwidth}
        \centering
        \includegraphics[width=\linewidth]{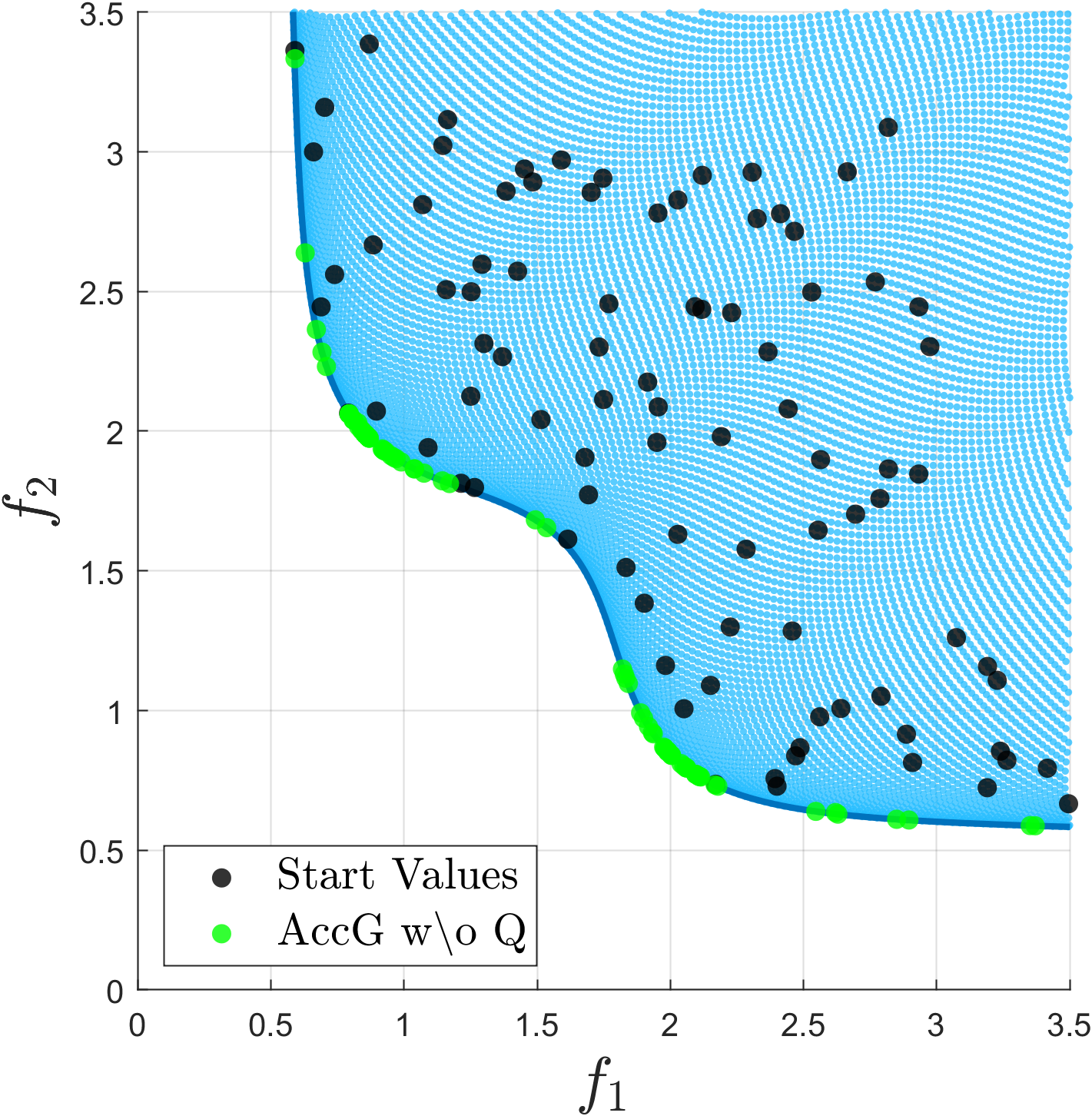}
  \caption{$s = \expnumber{1}{-2}$}\label{fig:ParetoFront_1000_h_1e-2}
    \end{subfigure}
     \begin{subfigure}[b]{0.32\textwidth}
        \centering
        \includegraphics[width=\linewidth]{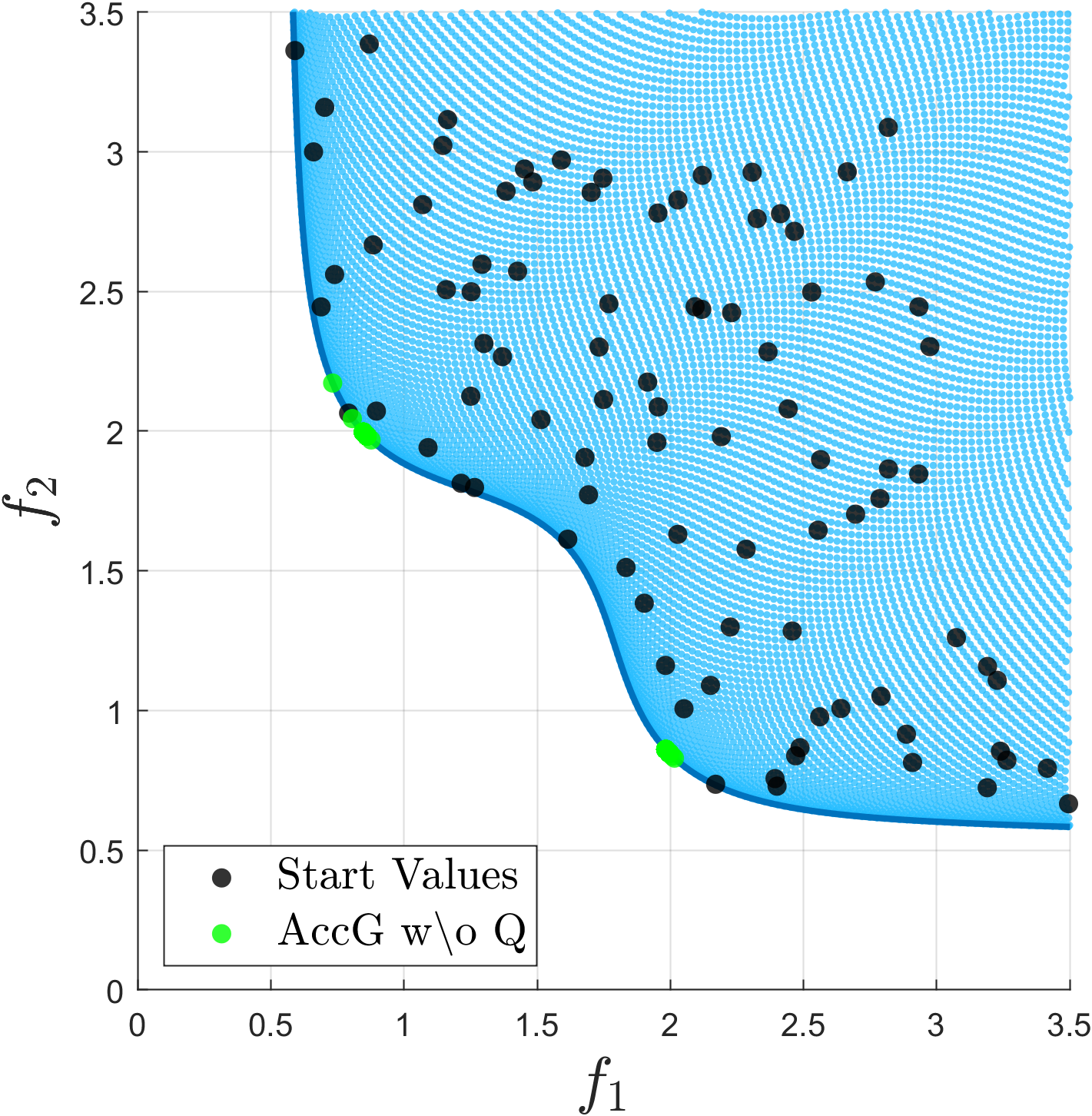}
        \caption{$s = \expnumber{5}{-2}$}\label{fig:ParetoFront_1000_h_5e-2}
    \end{subfigure}
    \caption{Values of the objective functions in the image space for different step sizes $s = \expnumber{5}{-3}, \expnumber{1}{-2}, \expnumber{5}{-2}$ for the algorithm AccG w\textbackslash o Q.}
        \label{fig:pareto_front2}
\end{figure}
\section{Conclusion and Open Questions}
\label{sec:conclusion}
We present the novel inertial gradient-like dynamical system \eqref{eq:IMOG'} for Pareto optimization. We show that trajectories of this system converge weakly to Pareto critical points of \eqref{eq:MOP}. Based on this, we define a novel inertial gradient method for multiobjective optimization and show weak convergence to Pareto critical points. We derive an accelerated gradient method from the informally introduced internal gradient-like system \eqref{eq:MAVD} which incorporates asymptotically vanishing damping. Using the concept of merit functions, we show that our method possesses an improved convergence rate. Using a different discretization of the system \eqref{eq:MAVD}, we define a further accelerated gradient method which does not require the solution to a quadratic optimization problem in every step. A comparison on selected test problems shows that the accelerated methods are in fact superior to the plain multiobjective steepest descent method.

There are a lot of open questions arising from the presented work. The gradient system \eqref{eq:IMOG'} can be analyzed for different problem classes. Also, we have not formally discussed the system \eqref{eq:MAVD}. It would be interesting to see if we can derive solutions from this system for $\alpha > 3$ and obtain stronger convergence results similar to the single objective setting. In addition, we can adapt our gradient systems and algorithms to treat problems with a separable smooth and nonsmooth structure using proximal methods. It would also be interesting to investigate the behavior of our algorithms for high-dimensional and nonconvex problems. In addition, one could apply the presented algorithms in the area of machine learning, e.g., for multitask learning problems \cite{SK18}.
\appendix
\section{Two Lemmas on Convex Projections}
\begin{mylemma}
Let $\H$ be a real Hilbert space, $C \subset \H$ a convex and compact set and $\eta \in \H$ a fixed vector. Then, $\xi \in \H$ is a solution to the problem
\begin{align}
\label{eq:projection_problem1}
\text{Find $\xi \in \H$ such that : }\, \eta = \proj_{C + \xi} (0),
\end{align}
if and only if it has the form $\xi = \eta - \mu$, where $\mu$ is a solution to the constrained optimization problem $\min_{\mu\in C} \,\langle \mu, \eta \rangle$.
\label{lem:proj_1}
\end{mylemma}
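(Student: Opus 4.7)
The approach rests entirely on the variational characterization of the projection onto a closed convex set: for a closed convex $K \subseteq \H$ and $x \in \H$, $p = \proj_K(x)$ if and only if $p \in K$ and $\langle x - p, y - p \rangle \le 0$ for all $y \in K$. The plan is to apply this with $K = C + \xi$ (which is closed and convex because $C$ is compact and convex) and $x = 0$, so that the equation $\eta = \proj_{C+\xi}(0)$ becomes the conjunction of two equivalent conditions: membership $\eta \in C+\xi$, and the variational inequality $\langle \eta, y - \eta \rangle \ge 0$ for all $y \in C + \xi$.

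First I would dispatch the membership condition: $\eta \in C + \xi$ is equivalent to $\mu \coloneqq \eta - \xi \in C$, i.e.\ to $\xi = \eta - \mu$ with $\mu \in C$. This both forces the required form of $\xi$ in the forward direction and is automatic from the ansatz in the backward direction.

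Next I would rewrite the variational inequality using the parametrization $y = \nu + \xi$, $\nu \in C$, of $C + \xi$. Substituting and using $\eta - \xi = \mu$, the inequality $\langle \eta, y - \eta \rangle \ge 0$ for all $y \in C + \xi$ becomes
\begin{align*}
\langle \eta, \nu - \mu \rangle \ge 0 \quad \text{for all } \nu \in C.
\end{align*}
This is precisely the first-order optimality condition for $\mu$ to minimize the linear functional $\nu \mapsto \langle \eta, \nu \rangle$ over the convex set $C$; since the objective is linear and $C$ is convex, this condition is both necessary and sufficient for global optimality, and compactness of $C$ guarantees that a minimizer exists so the characterization is nonvacuous.

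Since every step above is an equivalence, stringing them together yields the claimed biconditional. I do not anticipate any real obstacle: the lemma is essentially a repackaging of the projection's variational inequality together with the fact that first-order conditions are sufficient for linear objectives on convex sets.
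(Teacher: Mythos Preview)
Your proposal is correct and follows essentially the same approach as the paper: both arguments hinge on the variational characterization of the projection onto $C+\xi$ and the equivalence of the resulting inequality $\langle \eta, \nu - \mu\rangle \ge 0$ for all $\nu\in C$ with the first-order optimality condition for the linear problem $\min_{\mu\in C}\langle \mu,\eta\rangle$. Your write-up is slightly more explicit about the membership condition $\eta\in C+\xi$ and packages both directions as a single chain of equivalences, but the substance is identical.
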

\begin{proof}
First, we show that an element of the form $\xi = \eta - \mu$, with $\mu$ a solution to $\min_{\mu\in C} \,\langle \mu, \eta \rangle$ is a solution to problem \eqref{eq:projection_problem1}. The set of minimizers of the problem $\min_{\mu\in C} \,\langle \mu, \eta \rangle$ is nonempty, since $C$ is compact. Let $\mu \in \argmin_{\mu\in C} \langle \mu, \eta \rangle$. Since $C$ is convex, the first order optimality condition for this problem gives that for all $x\in C$ it holds that $\langle x - \mu, \eta \rangle \ge 0$ and hence
\begin{align*}
    \langle x + \xi - (\mu + \xi), \eta \rangle \ge 0.
\end{align*}
Since we have chosen $\xi = \eta - \mu$ the equation above reads as
\begin{align*}
    \langle x + \xi - \eta, \eta \rangle \ge 0,
\end{align*}
which is equivalent to $\eta = \proj_{C + \xi} (0)$. The other direction works analogously. If the vector $\xi$ is a solution to problem \eqref{eq:projection_problem1} this guarantees that $\mu = \xi - \eta$ satisfies the first order optimality condition for problem $\min_{\mu\in C} \,\langle \mu, \eta \rangle$. Since problem $\min_{\mu\in C} \,\langle \mu, \eta \rangle$ is convex and defined over a convex set, this is equivalent to $\mu$ being an optimal solution to $\min_{\mu\in C} \,\langle \mu, \eta \rangle$.
\end{proof}
\begin{mylemma}
Let $\H$ be a real Hilbert space, $C \subset \H$ a convex and closed set and $a > 0, \nu \in \H$ fixed, with $a \neq -1$ . Then, the problem
\begin{align}
\label{eq:inverse_projection3}
    \text{Find $\xi\in\H$ such that :  }\, -a(\xi + \nu) = \proj_{C + \xi}(0),
\end{align}
has the unique solution $\xi = -\left(\frac{1}{1+a}\proj_C\left(\nu\right) + \frac{a}{1+a}\nu\right)$.
\label{lem:proj_2}
\end{mylemma}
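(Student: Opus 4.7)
The plan is to recast the defining equation as a fixed-point equation in $\xi$ alone, verify by direct computation that the proposed formula satisfies it, and then close the argument with a short uniqueness proof based on the nonexpansiveness of the metric projection.

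First I would use the elementary translation identity $\proj_{C+\xi}(0) = \proj_C(-\xi) + \xi$, which holds because a point $c+\xi \in C+\xi$ is closest to $0$ if and only if $c$ is closest to $-\xi$. Substituting into $-a(\xi + \nu) = \proj_{C+\xi}(0)$ and rearranging yields the equivalent fixed-point equation
\begin{align*}
-(1+a)\xi - a\nu = \proj_C(-\xi).
\end{align*}
To verify that $\xi^{*} \coloneqq -\frac{1}{1+a}\proj_C(\nu) - \frac{a}{1+a}\nu$ solves this equation, I would compute
\begin{align*}
-\xi^{*} - \proj_C(\nu) = \frac{a}{1+a}\bigl(\nu - \proj_C(\nu)\bigr),
\end{align*}
and then check the variational characterization of the projection of $-\xi^{*}$ onto $C$: for every $c \in C$,
\begin{align*}
\langle c - \proj_C(\nu),\, -\xi^{*} - \proj_C(\nu) \rangle = \frac{a}{1+a} \langle c - \proj_C(\nu), \nu - \proj_C(\nu)\rangle \le 0,
\end{align*}
where the final inequality is precisely the variational characterization of $\proj_C(\nu)$ and the prefactor $\tfrac{a}{1+a}$ is positive because $a>0$. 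Hence $\proj_C(-\xi^{*}) = \proj_C(\nu)$, and substituting back into the fixed-point equation gives $-(1+a)\xi^{*} - a\nu = \proj_C(\nu)$, an immediate identity from the definition of $\xi^{*}$.

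For uniqueness, suppose $\xi_1$ and $\xi_2$ both satisfy the fixed-point equation. Subtracting the two instances yields $-(1+a)(\xi_1 - \xi_2) = \proj_C(-\xi_1) - \proj_C(-\xi_2)$, so taking norms and applying the nonexpansiveness of $\proj_C$ gives $(1+a)\lVert \xi_1 - \xi_2 \rVert \le \lVert \xi_1 - \xi_2\rVert$; since $a > 0$ this forces $\xi_1 = \xi_2$. I do not anticipate a genuine obstacle here. The only real risk is bookkeeping: the various signs and $\tfrac{1}{1+a}$ factors in the translation identity, the candidate formula, and the variational inequality are easy to misalign, so writing out the chain of substitutions carefully is the main point where care is needed.
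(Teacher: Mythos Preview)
Your proof is correct. The existence argument is essentially the same as the paper's: both verify the candidate $\xi^*$ via the variational characterization of $\proj_C(\nu)$, with your use of the translation identity $\proj_{C+\xi}(0)=\proj_C(-\xi)+\xi$ being a clean way to reduce to the fixed-point form $-(1+a)\xi-a\nu=\proj_C(-\xi)$ that the paper reaches implicitly. The one genuine difference is in the uniqueness step: the paper reverses the existence computation to show that any solution $\tilde\xi$ forces $-((1+a)\tilde\xi+a\nu)=\proj_C(\nu)$ and hence equals the explicit formula, whereas you subtract two instances of the fixed-point equation and invoke nonexpansiveness of $\proj_C$ together with $1+a>1$. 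Your route is slightly more robust (it does not require re-deriving the closed form) and would generalize immediately to settings where only a contraction estimate is available; the paper's route has the minor advantage of not needing the nonexpansiveness fact at all. Both are equally short and rigorous.
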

\begin{proof}
First, we show that $\xi = -\left(\frac{1}{1+a}\proj_C\left(\nu\right) + \frac{a}{1+a}\nu\right)$ is a solution to \eqref{eq:inverse_projection3}. It is easy to check that $-a(\xi + \nu) \in C + \xi$. Define $p \coloneqq \proj_C\left(\nu\right)$. For all $x \in C$ it holds that  $\langle x - p, p - \nu \rangle \ge 0$ and hence for all $x \in C$ we get
\begin{align*}
    \langle x + \xi + a(\xi + \nu), a(\xi + \nu) \rangle \le 0,
\end{align*}
which is equivalent to
\begin{align*}
     -a(\xi + \nu) = \proj_{C + \xi}(0).
\end{align*}
The uniqueness follows the same way. Assume we have a solution $\Tilde{\xi}$ to \eqref{eq:inverse_projection3}. By the same computations as above it holds that for all $x \in C$
\begin{align*}
    \langle x + (1+a)\Tilde{\xi} + a\nu, \Tilde{\xi} + \nu \rangle \le 0.
\end{align*}
This is equivalent to
\begin{align*}
    -((1+a)\Tilde{\xi} + a\nu) = \proj_C(\nu),
\end{align*}
from which follows that $\xi = \tilde{\xi}$ is the unique solution.
\end{proof}
\bibliographystyle{unsrt}
{\footnotesize
\bibliography{literature}
}
\nocite{*}
\end{document}